\newtheorem{thm}{Theorem}
\newtheorem{claim}[thm]{Claim}
\newtheorem{sublem}[thm]{Sublemma}
\newtheorem{cor}[thm]{Corollary}
\newtheorem{lem}[thm]{Lemma}
\newtheorem{defn}[thm]{Definition}
\theoremstyle{definition}
\newtheorem{fact}[thm]{Fact}
\newcommand{\nn}{\mathbb{N}}
\newcommand{\vs}{\vec{s}}
\newcommand{\vt}{\vec{t}}
\newcommand{\vw}{\vec{w}}
\newcommand{\V}{V^\infty (A)}
\newtheorem*{thm*}{Theorem}
\begin{document}
\title[Comb. proofs of Infinite versions of the  Hales--Jewett theorem]
{Combinatorial proofs of Infinite versions of the  Hales--Jewett
theorem}
\author{Nikolaos Karagiannis}
\address{National Technical University of Athens, Faculty of Applied Sciences,
Department of Mathematics, Zografou Campus, 157 80, Athens, Greece}
\email{nkaragiannis@math.ntua.gr}
\thanks{2010 \textit{Mathematics Subject Classification}: 05D10.}
\thanks{\textit{Key words}: alphabets, words, left variable words.}


\begin{abstract} We  provide new and purely combinatorial proofs of
two infinite  extensions of the
Hales--Jewett theorem. The first one
is due to T. Carlson and S. Simpson  and the second one is due T.
Carlson. Both concern  infinite increasing sequences of finite
alphabets.
\end{abstract}

\maketitle


\section{introduction}
The aim of this note is to provide  purely combinatorial  proofs
of two known
 infinitary
extensions of the Hales--Jewett  theorem. To state the theorems we
need first to recall the relevant terminology. By $\nn=
\{0,1,...\}$ we denote the set of all non negative integers. Let
$A$ be an  \textit{alphabet} (i.e. any non empty  set). The
elements of $A$ will be called letters. By $W(A)$ we denote the
set of \textit{constant words} \textit{over} $A$, that is all
finite sequences with elements in $A$ including the empty
sequence. For $N \in \nn$, by $A^N$, we denote all finite sequences
from $A$, consisting of $N$ letters. We also fix a variable  $x \notin A$,
then a \textit{variable word} \textit{over} $A$ is an element in $W(A
\cup \{x\}) \setminus W(A)$. The variable words will be denoted by
$s(x), t(x), w(x)$, etc. A \textit{left variable} word is a
variable word  such that $x$ is its leftmost letter. Given a
variable word $s(x)$ and $a\in A$, by $s(a)$ we denote the constant
word in $W(A)$ resulting from the substitution of  the variable
$x$ with    the letter $a$. Given $q\in\nn$ with $q\geq 1$, then a
$q$-\textit{coloring} of a set $X$ is any map $c:X \rightarrow
\{1,...,q\}$. A subset $Y$ of $X$ will be called
\textit{monochromatic}, if there exists $1\leq i \leq q $ such
that $c(y)=i$, for all $y \in Y$. Finally, for every finite set
$X$, by $|X|$ we denote its cardinality.

 The following is the Hales--Jewett
theorem \cite{H-J}.

\begin{thm}\label{thm1} For every positive integers $p,q$ and every finite
alphabet $A$
  with $|A|=p$, there exists\textbf{} a positive integer
  $N_0=HJ(p,q)$ with the following property. For
  every $N \geq N_0$ and for every $q$-coloring of
  $A ^N$ there exists a variable word $w(x) \in (A\cup \{x\})^N
  \setminus A^N$ such that the set
  $\{w(a):a \in A\}$ is monochromatic.
\end{thm}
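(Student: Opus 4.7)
The plan is to prove Theorem \ref{thm1} by induction on $p = |A|$, following the classical Shelah-style partition argument. The base case $p = 1$ is immediate: with $N_0 = 1$ and $w(x) = x$, the set $\{w(a) : a \in A\}$ has a single element and is trivially monochromatic. For the inductive step, assume the theorem holds for every alphabet of size less than $p$ and every number of colors. Enumerate $A = \{a_1, \ldots, a_p\}$ and set $A' = A \setminus \{a_p\}$. I would choose positive integers $N_p, N_{p-1}, \ldots, N_1$ by reverse recursion, with each $N_i$ at least $HJ(p-1, Q_i)$ for an appropriate color count $Q_i$ depending on $N_{i+1}, \ldots, N_p$; then set $N_0 = N_1 + \cdots + N_p$ and partition the coordinates of $A^{N_0}$ into consecutive blocks $B_1, \ldots, B_p$ with $|B_i| = N_i$.

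The heart of the construction is the reverse-inductive production, from a given $q$-coloring $c$ of $A^{N_0}$, of variable words $w_i(x) \in (A' \cup \{x\})^{N_i} \setminus (A')^{N_i}$ satisfying the following invariance: for every choice of later substitutions $(b_{i+1}, \ldots, b_p) \in A^{p-i}$ and every relevant prefix $f \in A^{N_1 + \cdots + N_{i-1}}$, the color of
\[
f \cdot w_i(a) \cdot w_{i+1}(b_{i+1}) \cdots w_p(b_p)
\]
is independent of $a \in A'$. At stage $i$ one reinterprets $c$ as an auxiliary coloring of $(A')^{N_i}$ taking $Q_i$ values (indexed by the possible prefixes and tail substitutions) and applies the inductive hypothesis for the $(p-1)$-letter alphabet $A'$ to produce $w_i$.

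Finally I would consider the $p+1$ composite words
\[
u_j = w_1(a_p) \cdots w_j(a_p) w_{j+1}(a_1) \cdots w_p(a_1), \qquad j = 0, 1, \ldots, p,
\]
which receive at most $q$ colors, so that pigeonhole yields $0 \leq j < k \leq p$ with $c(u_j) = c(u_k)$. The variable word
\[
w(x) = w_1(a_p) \cdots w_j(a_p) w_{j+1}(x) \cdots w_k(x) w_{k+1}(a_1) \cdots w_p(a_1)
\]
then satisfies $w(a_1) = u_j$, $w(a_p) = u_k$, and for every $a \in A' \setminus \{a_1\}$ the invariance property, applied successively to the blocks $B_{j+1}, \ldots, B_k$, yields $c(w(a)) = c(u_j)$, so that $\{w(a) : a \in A\}$ is monochromatic. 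The main obstacle is the precise formulation and bookkeeping of the invariance property so that it covers all prefixes arising in this final sequential substitution, because the prefix at block $B_i$ there equals $w_1(a_p) \cdots w_j(a_p) w_{j+1}(a) \cdots w_{i-1}(a)$ rather than a single fixed string; accommodating this forces $Q_i$ to be inflated to account for all possible prefixes in $A^{N_1 + \cdots + N_{i-1}}$, which is the technical core of the inductive step.
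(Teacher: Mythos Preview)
The paper does not give a proof of Theorem~\ref{thm1}: the classical Hales--Jewett theorem is quoted with the citation~\cite{H-J} and then used as a black box inside the proofs of Theorems~\ref{thm2} and~\ref{thm3}. So there is no proof in the paper to compare your attempt against.

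Your sketch follows Shelah's argument, and the overall strategy is the right one, but there is a genuine error in the choice of the number of blocks. You partition the coordinates into $p$ blocks $B_1,\ldots,B_p$ (one per letter of $A$) and then form the $p+1$ composite words $u_0,\ldots,u_p$; the pigeonhole step ``$p+1$ words into $q$ colors forces a collision'' requires $p\geq q$, which is not among the hypotheses. For instance, with $p=2$ and $q=100$ your three words $u_0,u_1,u_2$ may well receive three different colors and the argument stalls. The repair is easy but essential: the number of blocks must be taken to be at least $q$, not $p$, so that one obtains $q+1$ composite words and pigeonhole applies regardless of the relation between $p$ and $q$. Once that change is made, everything else in the outline --- the reverse recursion determining the block lengths $N_i$, the auxiliary colorings with $Q_i$ values indexed by prefixes and tail substitutions, and the invariance property driving the final substitution --- goes through as you describe.
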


The Hales--Jewett theorem gave birth to a whole new branch of
research which concerns infinite extensions of it in the context
of both  finite and infinite alphabets (see \cite{C}, \cite{C-S},
\cite{F-K}, \cite{HMC}, \cite{MC}, \cite{PV}). For an
exposition of those results the reader can also refer to
 \cite{MCb}, \cite{Tod}.

The  first  theorem that we will prove is  due to T. Carlson and
S. Simpson \cite{C-S}.

\begin{thm} \label{thm2}
 Let $(A_n)_{n=0}^ \infty $ be an increasing  sequence of finite
 alphabets and let $A=\cup_{n\in \nn} A_n$.
 Then for every finite coloring of $W(A)$ there exists a sequence
  $(w_n(x))_{n=0}^ \infty$ of variable words over $A$
  such that for every $n\geq1$, $w_n(x)$ is a left variable word
  and for every $n\geq 0$ the words of the form $w_0(a_0) w_1(a_1) ... w_n(a_n)$ with
 $a_0 \in A_{0}, a_1\in A_{1}, ... , a_n \in A_{n}$ are of the same color.
\end{thm}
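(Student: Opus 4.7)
I propose a two-stage proof: first, establish a finite ``joint-prefix'' version of the theorem by induction on the number of variable words, and then extract the infinite sequence via K\"onig's lemma applied to a finitely branching tree of admissible tuples. The engine throughout is Theorem~\ref{thm1}, applied at each stage with progressively enlarged (but still finite) color spaces.

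\textbf{Finite version and its inductive proof.} The finite version asserts: for every $n, q$ and every $q$-coloring $c$ of $W(A)$, there exist variable words $w_0, \ldots, w_n$ with $w_i$ left variable for $i\geq 1$ such that for every $0\leq k\leq n$ the set $\{w_0(a_0)\cdots w_k(a_k): a_i\in A_i\}$ is $c$-monochromatic. I would prove this by induction on $n$, with base case $n=0$ given by Theorem~\ref{thm1} applied to $A_0$. For the induction step, let $M$ be the length bound from the hypothesis at level $n-1$; color each candidate word $u$ over $A_0$ by its \emph{profile} $\sigma(u):=\bigl(c(u\cdot v)\bigr)_{v\in W(A_n),\ |v|\leq M}$, a coloring with at most $q^{|A_n|^{M+1}}$ values. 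Applying Theorem~\ref{thm1} to $A_0$ with this enlarged color space yields a variable word $w_0$ over $A_0$ whose substitutions are $\sigma$-monochromatic; equivalently, the coloring $\bar c(v):=c(w_0(a_0) v)$ is well defined (independent of $a_0\in A_0$), and in particular $\{w_0(a_0)\}$ is $c$-monochromatic. I then apply the induction hypothesis to $\bar c$ with the shifted sequence $A_1\subseteq\cdots\subseteq A_n$, obtaining $w_1,\ldots,w_n$, and concatenate. Each application of Theorem~\ref{thm1} in the recursion should be via a left-variable strengthening (itself a corollary of Theorem~\ref{thm1}, derived by coloring suffixes $u$ by their first-letter columns $(c(bu))_{b\in B}$) to enforce the left-variable condition on $w_1,\ldots,w_n$.

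\textbf{Compactness and main obstacle.} The finite version provides, for each $n$, a tuple $(w_0^{(n)},\ldots,w_n^{(n)})$ of bounded length with letters in the finite alphabet $A_n$; the set of all such admissible tuples is therefore finite at every level and forms a finitely branching tree under restriction. The finite version guarantees non-emptiness at every level, so K\"onig's lemma yields an infinite branch, which is the desired sequence $(w_n)_{n=0}^\infty$. The principal obstacle is ensuring \emph{joint} monochromaticity across all prefix levels simultaneously: a naive induction that appends a new block on the right only controls the color at the newly extended level, not at the earlier ones. The profile-coloring device --- prepending $w_0$ on the left while recording the full downstream behavior of the coloring --- resolves this by absorbing the entire sub-problem into a single application of Theorem~\ref{thm1}, at the cost of an exponentially larger color space. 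The left-variable constraint is a secondary, bookkeeping complication handled by the auxiliary column-coloring application noted above.
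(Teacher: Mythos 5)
Your finite (``joint-prefix'') version and its profile-coloring proof are fine --- this is the standard finitary form of the Carlson--Simpson theorem and does follow from Theorem \ref{thm1} by the induction you describe. The gap is in the second stage: the passage to the infinite statement by K\"onig's lemma fails, and not for a repairable bookkeeping reason. Your tree of admissible tuples is not finitely branching. The length of an admissible $w_0$ produced by the level-$n$ finite version is governed by $HJ\bigl(|A_0|,\,q^{|A_n|^{M+1}}\bigr)$, where $M$ is the length bound coming from the level-$(n-1)$ instance; both $|A_n|$ and $M$ grow with $n$, so there is no bound on the length of $w_0$ that is uniform in $n$. Consequently the root level of your tree already contains words of unbounded length and is infinite; if instead you truncate level $n$ to tuples of total length at most the level-$n$ bound $L(n)$, then the restriction of a level-$(n+1)$ node has length up to $L(n+1)>L(n)$ and need not be a level-$n$ node, so you no longer have a tree under restriction. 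This is exactly the obstruction that prevents deducing Hindman's theorem from the finite sums (Folkman) theorem by compactness: the infinite statement is genuinely stronger than all of its finite approximations, and there is no finitely branching tree of finite witnesses. (A secondary point: the paper proves the stronger conclusion that the words $w_0(a_0)\cdots w_n(a_n)$ lie in a \emph{single} color class simultaneously for all $n$; your per-level finite version, even granting the branch, would only give a color depending on the level.)

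What is needed in place of compactness is an inductively maintained \emph{largeness} condition certifying, at every finite stage, that the construction can be continued forever. This is what the paper does, following Baumgartner's proof of Hindman's theorem: it defines $(S,k)$-largeness of a set $E\subseteq W(A)$ in an infinite sequence of variable words (Definition \ref{defn large}), proves that largeness is partition regular (Lemma \ref{Cl14}), uses Theorem \ref{thm1} once per step (Lemma \ref{Cl16}) to extract a word $w(x)$ so that $E_F$ with $F=\{w(a):a\in A_k\}$ is again large in a suitable tail, and then fuses the infinitely many steps by a diagonal argument (Lemmas \ref{Cf9}, \ref{Cl20} and \ref{Cl21}). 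Your profile-coloring idea reappears there in disguise --- the set $E_F$ records exactly the ``downstream behavior'' of the coloring --- but it must be carried along an infinite induction rather than compressed into a finite tree.
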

The case where $A_n=A$ for all  $n \in \nn$ of the above theorem, was the
key lemma for the main result of \cite{C-S} and is commonly referred
as the Carlson--Simpson theorem. As noted in
\cite[p. 274]{C-S}, the above reformulation in terms of an infinite
increasing  sequence of finite alphabets, is due to Miller and
Prikry and it is closely related  to an infinite extension of the
Halpern--L\"{a}uchli theorem due to R. Laver \cite{L}.

The second  theorem that we will prove is  due to T. Carlson
\cite{C}.
\begin{thm}\label{thm3}
  Let $(A_n)_{n=0}^ \infty$  be an increasing  sequence of finite
  alphabets and let $A=\cup_{n \in \nn} A_n$.
 Then for every finite coloring of $W(A)$ there exists a sequence
  $(w_n(x))_{n=0}^ \infty$ of variable words over $A$
  such that for every $n\in\nn$ and every  $m_0<m_1<...<m_n$,
  the  words of the form $w_{m_0}(a_0) w_{m_1}(a_1) ... w_{m_n}(a_n)$ with
 $a_0 \in A_{m_0}, a_1\in A_{m_1}, ... , a_n \in A_{m_n}$ are of the same color.
\end{thm}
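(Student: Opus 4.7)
My plan is to deduce Theorem \ref{thm3} from Theorem \ref{thm2} by an iterative refinement argument, constructing the sequence $(w_n)_{n=0}^\infty$ one term at a time while maintaining a residual ``tail'' of variable words with strong homogeneity properties. Fix a finite coloring $c : W(A) \to \{1, \ldots, q\}$. The point is that Theorem \ref{thm2} delivers homogeneity for \emph{initial segments} $(w_0, \ldots, w_n)$, whereas Theorem \ref{thm3} requires homogeneity for \emph{arbitrary subsequences} $(w_{m_0}, \ldots, w_{m_n})$; the passage from one to the other is the content of the implication, and it should come from running Theorem \ref{thm2} infinitely many times with carefully refined colorings.

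The inductive invariant I would carry at stage $n$ asserts that, after having chosen $w_0, \ldots, w_{n-1}$ and an infinite residual sequence $\tau^{(n)} = (t_k^{(n)})_{k \geq 0}$ of variable words (with $t_k^{(n)}$ labelled by alphabet $A_{n+k}$), the following holds: for every prefix pattern $m_0 < \cdots < m_{r-1} < n$ with letters $a_i \in A_{m_i}$, every $l \geq -1$, and every $b_j \in A_{n+j}$ for $0 \leq j \leq l$, the color
\[
c\bigl(w_{m_0}(a_0)\cdots w_{m_{r-1}}(a_{r-1})\,t_0^{(n)}(b_0)\cdots t_l^{(n)}(b_l)\bigr)
\]
depends only on the indices $(m_0, \ldots, m_{r-1}, l)$, not on any of the letters. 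The base case $n = 0$ is precisely Theorem \ref{thm2} applied to $c$. For the inductive step I would set $w_n := t_0^{(n)}$, form the auxiliary product coloring $\tilde{c}(u) := (c(w_{m_0}(a_0) \cdots w_{m_{r-1}}(a_{r-1})\, u))_D$ indexed by all prefix data $D = (m_0 < \cdots < m_{r-1} \leq n;\, a_i \in A_{m_i})$ (finitely many, so $\tilde{c}$ has finite range), and apply Theorem \ref{thm2} to $\tilde{c}$ with the shifted alphabet sequence $(A_{n+1+k})_{k \geq 0}$, taking the output as the new residual $\tau^{(n+1)}$. Theorem \ref{thm3} then follows from the $l = -1$ case of the invariant at stage $m_n + 1$.

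The main obstacle is ensuring that the $a_i$-independence actually carries from stage $n$ to stage $n+1$. Applying Theorem \ref{thm2} to the product coloring provides $b_j$-independence on a \emph{fresh} sequence, but the required $a_i$-independence (for the new extended prefixes that may include $w_n = t_0^{(n)}$) must follow from the previous invariant applied to extended initial segments of $\tau^{(n)}$. The cleanest way to guarantee this is to arrange $\tau^{(n+1)}$ as a ``combinatorial block subsequence'' of $(t_k^{(n)})_{k \geq 1}$, which demands a subspace version of Theorem \ref{thm2}; this version can be obtained by applying Theorem \ref{thm2} to the pullback of $\tilde c$ along the concatenation map that reads each new variable word as a block of old ones. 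Careful bookkeeping of the alphabet shift (each $t_k^{(n)}$ is paired with $A_{n+k}$, and the subspace version must respect this pairing when invoking Theorem \ref{thm2} on the shifted increasing finite alphabets) is essential but routine.
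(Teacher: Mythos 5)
Your reduction breaks at the inductive step, precisely for prefix patterns that \emph{omit} the newly chosen word $w_n$. The invariant you carry at stage $n$ controls $c\bigl(w_{m_0}(a_0)\cdots w_{m_{r-1}}(a_{r-1})\,t_0^{(n)}(b_0)\cdots t_l^{(n)}(b_l)\bigr)$ only for tails that are \emph{consecutive initial} segments of $\tau^{(n)}$. After you set $w_n:=t_0^{(n)}$ and extract $\tau^{(n+1)}$ as a block subsequence of $(t_k^{(n)})_{k\ge1}$, a stage-$(n+1)$ word with pattern $m_0<\cdots<m_{r-1}<n$ expands to $w_{m_0}(a_0)\cdots w_{m_{r-1}}(a_{r-1})\,t_1^{(n)}(c_1)\cdots t_p^{(n)}(c_p)$, whose tail skips $t_0^{(n)}$; the stage-$n$ invariant says nothing about such words. (Patterns with $m_{r-1}=n$ are fine: there the word is again an initial-segment tail at stage $n$, with $a_{r-1}$ in the role of $b_0$.) The new application of Theorem \ref{thm2} to $\tilde c$ does not fill the hole: because $\tilde c$ must be indexed by prefix \emph{data} $(m_0<\cdots<m_{r-1};a_0,\dots,a_{r-1})$ to be well defined, monochromaticity of $\tilde c$ on the new span only makes each component constant in the tail letters $b_j$; it cannot force two components with the same pattern but different letters $a_i$ to agree. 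Already at stage $2$, for the pattern $(0)$, nothing you have established excludes that $c(w_0(a)\,v)\neq c(w_0(a')\,v)$ for \emph{every} $v$ in the span of $(t_k^{(1)})_{k\ge1}$: every constrained word beginning with $w_0(\cdot)$ so far continues through the block of old words constituting $t_0^{(1)}$, while these $v$ skip that block entirely. Since $w_0$ is frozen, no later refinement of the tail can repair this.

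The natural fix is to strengthen the invariant so that the tail ranges over arbitrary gap-allowing (``extracted'') subsequences of $\tau^{(n)}$ rather than initial segments --- but then the base case is no longer Theorem \ref{thm2}; it is Theorem \ref{thm3} itself, and the argument is circular. This is exactly why the paper does not derive Theorem \ref{thm3} from Theorem \ref{thm2}: Section 3 is an independent parallel development built on extracted spans and the corresponding notion of $k$-largeness, whose entire purpose is that the guarantee attached to a word $w(x)$ at the moment it is chosen (Lemma \ref{Cl31}: $\{w(a):a\in A_k\}\subseteq E$; Lemma \ref{Cl32}: $E\cap E_F$ remains large) survives \emph{all} subsequent extractions, including those that skip blocks. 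No guarantee of that strength is extractable from Theorem \ref{thm2} used as a black box, and the paper's remark that the left-variable analogue of Theorem \ref{thm3} is false is a further indication that the two theorems are not linked by a soft iteration of this kind. If you want a combinatorial proof, you should follow the paper's route and run the largeness machinery directly for extracted block subsequences.
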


Actually, Theorem \ref{thm3} is a consequence of a more general
result of T. Carlson (see \cite[Theorem 15]{C}). As shown in
\cite[\S 3]{HMC}, a left variable version of Theorem \ref{thm3} is
not true. However, such a version holds true for the case of a
finite alphabet \cite[Theorem 2.3]{MC}.

 The common approach of
the proof of the above results is based on topological as well as
algebraic notions of the Stone--{\v C}ech compactification of the
related structures (see e.g. \cite{Hb}). On the other hand, the
proofs of Theorems \ref{thm2} and \ref{thm3} that we are going to
present are based on the classical Hales--Jewett theorem and avoid
the use of ultrafilters. Our approach has its origins in the proof
of Hindman's theorem \cite{H} due to J. E. Baumgartner \cite{B}.
Actually,  a proof of a weaker version of Theorem \ref{thm2} given
in \cite[\S 2.3]{MCb} was the motivation for this note. The main
difficulty that we encountered was the manipulation of the
infinite sequence $(A_n)_{n=0}^ \infty$ of alphabets. In
particular, we remark that if one wishes to prove the
aforementioned results for a finite alphabet then the proofs are
considerably simpler.

\section{Proof of theorem \ref{thm2}}

\subsection{Preliminaries.} \label{Initializing the
proof}  In this subsection we introduce some notation and
terminology that we will use for the proof of Theorem \ref{thm2}.
 We fix  an increasing  sequence
 $$A_0\subseteq A_1 \subseteq ... \subseteq A_n \subseteq ...$$
 of finite alphabets and we set $$A=\cup_{n\in \nn} A_n.$$
 Let
 $V(A)$ be the set of all \textit{variable words} (\textit{over}
 $A$).
 By $V^ {<\infty}(A) $ (resp. $V^ \infty(A)
$) we denote the set of all finite (resp. infinite) sequences of
variable words. Also let $V^{\leq\infty}(A)=V^ {<\infty}(A) \cup
V^ \infty(A)$. Generally, the elements of $V^{\leq\infty}(A)$ will
be denoted by $\vs,\vt, \vw$, etc.

 \subsubsection{Reduced constant and variable span of a sequence of variable words.}
 Let  $m\in \nn$,
$(s_n(x))_{n=0}^m\in V^{<\infty}(A)$ and $(k_n)_{n=0}^m$ be a
strictly increasing finite sequence of
non negative integers.
 The \textit{reduced constant span of}
$(s_n(x))_{n=0}^m$ \textit{with respect to} $(A_{k_n})_{n=0}^m$ is
defined to be the set \[[(s_n(x))_{n=0}^m \  \| \
(A_{k_n})_{n=0}^m]_c= \{s_0(a_0)s_1(a_1)... s_m(a_m):   a_i\in
A_{k_i} \  \text{for all} \  0\leq i \leq m \}.\]

We also define
the \textit{reduced variable span of} $(s_n(x))_{n=0}^m$
\textit{with respect to} $(A_{k_n})_{n=0}^m$ to be the set
\[[(s_n(x))_{n=0}^m \  \| \ (A_{k_n})_{n=0}^m]_v =
V(A) \cap [(s_n(x))_{n=0}^m \  \| \ (A_{k_n} \cup
\{x\})_{n=0}^m]_c\] that is the reduced variable span of
$(s_n(x))_{n=0}^m$ with respect to $(A_{k_n})_{n=0}^m$ consists of
all variable words of the form $s_0(b_0) ... s_m(b_m)$ such that
for each $0\leq n \leq m$, $b_n \in A_{k_n} \cup \{x\}$ and for at
least one $n$ we have $b_n=x$.

Finally, for a non empty finite subset $B$ of $A$ we set
\[[(s_n(x))_{n=0}^m\;\|\;B]_c=\{s_0(b_0)s_1(b_1)...s_m(b_m):
b_i\in B \text{ for all }
\  0\leq i \leq m \}\] and
\[[(s_n(x))_{n=0}^m\;\|\;B]_v=V(A)\cap
[(s_n(x))_{n=0}^m\;\|\;B\cup \{x\}]_c
.\]

The above notation is naturally extended to infinite sequences of
variable words as follows. Let $(s_n(x))_{n=0}^ \infty  \in \V$
and $(k_n)_{n=0}^ \infty$
be a strictly increasing sequence of non negative integers. Then
the reduced constant span of $(s_n(x))_{n=0}^ \infty$ with respect to
$(A_{k_n})_{n=0}^ \infty$, denoted by $[(s_n(x))_{n=0}^ \infty \  \| \ (A_{k_n})_{n=0}^ \infty]_c$
is the set
\[\begin{split}
& \{s_0(a_0)s_1(a_1) ...s_m(a_m): m\in \nn, a_i\in A_{k_i}  \text{ for all }
0\leq i\leq m \} \\
 &= \cup _{m\in \nn} [(s_n(x))_{n=0}^m \  \| \ (A_{k_n})_{n=0}^m]_c
\end{split}\]
and the reduced variable span of $(s_n(x))_{n=0}^ \infty$ with respect to
$(A_{k_n})_{n=0}^ \infty$ is the set
\[\begin{split}
[(s_n(x))_{n=0}^ \infty \  \| \ (A_{k_n})_{n=0}^ \infty]_v &
=V(A) \cap [(s_n(x))_{n=0}^ \infty \ \| \ (A_{k_n} \cup \{x\})_{n=0}^ \infty]_c \\
&= \cup _{m\in \nn} [(s_n(x))_{n=0}^m \  \| \ (A_{k_n})_{n=0}^m]_v.
\end{split}\]

In the following  we will also write $[\vs \  \| \ (A_{k_n})_{n=0}^ \infty]_c$
(resp. $[\vs \  \| \ (A_{k_n})_{n=0}^ \infty]_v$) to denote the the reduced
constant (resp. variable) span of $\vs=(s_n(x))_{n=0}^ \infty$ with respect to
$(A_{k_n})_{n=0}^ \infty$.

\subsubsection{Reduced $k$-block subsequences  of a sequence of variable words.}
We need to specify a notion of
a ``block subsequence" of a sequence of variable words. To this
end we give the following definition.

\begin{defn}\label{block} Let $k
\in \nn$.
\begin{enumerate}
\item[(i)]Let $l\in \nn$, $\vt=(t_n(x))_{n=0}^l\in V^{<\infty}(A)$
and $\vs=(s_n(x))_{n=0}^ \infty\in \V$. We say that $\vt$ is a (finite)
reduced $k$-block subsequence of $\vs$ if there exist $0=m_0<...<m_{l+1}$ such
that
$$t_i(x)\in [(s_n(x))_{n=m_{i}}^{m_{i+1}-1} \ \| \
(A_{k+n})_{n=m_{i}}^{m_{i+1}-1}]_v,$$ for all  $0 \leq i \leq l$.
 \item[(ii)] Let $\vt=(t_n(x))_{n=0}^\infty$, $\vs=(s_n(x))_{n=0}^\infty \in \V$.
 We say
that $\vt$ is a (infinite) reduced $k$-block subsequence of $\vs$ if for every
$l\in \nn$ the finite sequence $(t_n(x))_{n=0}^l$ is a (finite) reduced
$k$-block subsequence of $\vs$.
\end{enumerate}

\end{defn}
In the following we will write  $\vt \unlhd_k \vs$ whenever $\vt
\in V^{\leq \infty}(A)$, $\vs\in \V$ and $\vt$ is a reduced
$k$-block subsequence of $\vs$.

Taking into account that  the sequence of alphabets $(A_n)_{n=0}^ \infty$ is
increasing, the next facts follow easily from the above
definitions.
\begin{fact}\label{Cf7}
 Let $k \in \nn$ and $\vs,\vt  \in \V$. If  $\vt \unlhd _k
\vs$ then   $$[\vt \; \| \; (A_{k+n})_{n=0}^ \infty]_c \subseteq [\vs\;\|\;
  (A_{k+n})_{n=0}^ \infty]_c \text{ and  } [\vt \; \| \; (A_{k+n})_{n=0}^ \infty]_v
  \subseteq [\vs\;\|\;
(A_{k+n})_{n=0}^ \infty]_v.$$
\end{fact}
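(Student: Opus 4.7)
The plan is to unfold the definitions. A word $w \in [\vt \; \| \; (A_{k+n})_{n=0}^\infty]_c$ has the form $w = t_0(a_0) t_1(a_1) \cdots t_l(a_l)$ for some $l \in \nn$ and letters $a_i \in A_{k+i}$. Since $\vt \unlhd_k \vs$, there exist $0 = m_0 < m_1 < \cdots < m_{l+1}$ such that each $t_i(x)$ admits an expansion $t_i(x) = s_{m_i}(c_{m_i,i}) s_{m_i+1}(c_{m_i+1,i}) \cdots s_{m_{i+1}-1}(c_{m_{i+1}-1,i})$ with $c_{n,i} \in A_{k+n} \cup \{x\}$ and at least one $c_{n,i}$ equal to $x$. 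Substituting $a_i$ for $x$ in each block and concatenating rewrites $w$ as $w = s_0(d_0) s_1(d_1) \cdots s_{m_{l+1}-1}(d_{m_{l+1}-1})$, where for $n$ in the range $\{m_i, \ldots, m_{i+1}-1\}$ we set $d_n = c_{n,i}$ if $c_{n,i} \neq x$, and $d_n = a_i$ otherwise.

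The only nontrivial step is to verify that $d_n \in A_{k+n}$ at every position $n$. If $d_n = c_{n,i}$ this holds by construction; if $d_n = a_i \in A_{k+i}$ then, because the $m_i$'s are strictly increasing from $m_0 = 0$, we have $m_i \geq i$, so $n \geq m_i \geq i$, and the monotonicity of $(A_j)_j$ gives $A_{k+i} \subseteq A_{k+n}$. This places $w$ in $[\vs \; \| \; (A_{k+n})_{n=0}^\infty]_c$ and establishes the first inclusion.

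For the variable-span inclusion I would repeat the same rewriting, now allowing $a_i \in A_{k+i} \cup \{x\}$ with at least one $a_i = x$. The letter assignment $(d_n)$ is built identically and still satisfies $d_n \in A_{k+n} \cup \{x\}$ by the same argument as above. The new point to check is that some $d_n$ equals $x$: pick any index $i$ with $a_i = x$ and any $n \in \{m_i, \ldots, m_{i+1}-1\}$ with $c_{n,i} = x$ (such an $n$ exists because $t_i(x)$ is itself a variable word); then by construction $d_n = x$, so $w$ lies in $[\vs \; \| \; (A_{k+n})_{n=0}^\infty]_v$.

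The main obstacle here is purely bookkeeping: keeping the double indexing $c_{n,i}$ straight and invoking the inequality $m_i \geq i$ precisely where it yields $A_{k+i} \subseteq A_{k+n}$. Once those are tracked, the result is a direct unwinding of Definition \ref{block} together with the hypothesis that $(A_n)_{n=0}^\infty$ is increasing, which is why the authors record it as a fact rather than a lemma.
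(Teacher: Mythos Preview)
Your proof is correct and is precisely the direct unwinding of Definition~\ref{block} combined with the monotonicity of $(A_n)_n$ that the paper has in mind; the paper itself gives no explicit argument, noting only that Facts~\ref{Cf7} and~\ref{Cf8} ``follow easily from the above definitions'' once one takes into account that the sequence of alphabets is increasing. Your identification of the inequality $m_i\geq i$ as the crux is exactly right, and the bookkeeping is handled cleanly.
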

\begin{fact}\label{Cf8}
 Let $k_0,k_1 \in \nn$, $\vs,\vt, \vw \in \V$. If
 $k_0\leq k_1$ and  $\vw \unlhd _{k_0} \vt\unlhd_{k_1} \vs$ then  $\vw \unlhd_{k_1} \vs.$
\end{fact}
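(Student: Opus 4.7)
The plan is to unfold the definitions and track, letter by letter, which alphabet each substituted letter comes from. It suffices to handle the finite case, since the infinite version reduces to it via part (ii) of Definition \ref{block}, so I would first reduce to showing that for every $l$ the initial segment $(w_n(x))_{n=0}^{l}$ is a finite reduced $k_1$-block subsequence of $\vs$.

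Write $\vs = (s_j(x))_{j=0}^\infty$ and $\vt = (t_n(x))_{n=0}^\infty$. From $\vt \unlhd_{k_1} \vs$, fix a strictly increasing sequence $0 = m_0 < m_1 < \cdots$ witnessing the block decomposition, so that each $t_n(x) = s_{m_n}(c^{(n)}_{m_n}) \cdots s_{m_{n+1}-1}(c^{(n)}_{m_{n+1}-1})$ with $c^{(n)}_j \in A_{k_1+j} \cup \{x\}$ and at least one $c^{(n)}_j = x$. From $\vw \unlhd_{k_0} \vt$, fix $0 = m_0' < m_1' < \cdots$ such that each $w_i(x)$ is obtained by concatenating $t_n(b_n^{(i)})$ for $m_i' \le n < m_{i+1}'$ where $b_n^{(i)} \in A_{k_0+n} \cup \{x\}$, with at least one $b_n^{(i)} = x$. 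The natural candidate for the witnessing sequence for $\vw \unlhd_{k_1} \vs$ is $p_i := m_{m_i'}$, which is clearly strictly increasing and starts at $0$.

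With this choice, expanding each $t_n(b_n^{(i)})$ shows that $w_i(x)$ is a concatenation of words of the form $s_j(d_j)$ for $p_i \le j < p_{i+1}$, where for $j$ in the block $m_n \le j < m_{n+1}$ the letter $d_j$ equals $c_j^{(n)}$ when $c_j^{(n)} \ne x$, equals $b_n^{(i)}$ when $c_j^{(n)} = x$ and $b_n^{(i)} \ne x$, and equals $x$ when both $c_j^{(n)} = x$ and $b_n^{(i)} = x$. In the first case $d_j \in A_{k_1+j}$ by construction of $\vt$; in the second case $d_j \in A_{k_0+n}$, and since $k_0 \le k_1$ and $n \le m_n \le j$ we have $k_0+n \le k_1+j$, so by monotonicity of the sequence $(A_k)$ we get $d_j \in A_{k_0+n} \subseteq A_{k_1+j}$. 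The key small check is that $w_i(x)$ is still a variable word, i.e.\ at least one $d_j = x$: pick an index $n_0$ with $b_{n_0}^{(i)} = x$ (which exists since $w_i$ is a variable word of $\vt$), and within $t_{n_0}$ pick $j_0$ with $c_{j_0}^{(n_0)} = x$; then $d_{j_0} = x$.

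The main (modest) obstacle is just bookkeeping: keeping the two block decompositions, the two layers of substitutions, and the alphabet indices all coherent. The hypothesis $k_0 \le k_1$ is used exactly once, combined with the trivial inequality $n \le j$ inside a block $[m_n,m_{n+1})$, to ensure $A_{k_0+n} \subseteq A_{k_1+j}$; this is the only nontrivial content of the fact and is the reason the statement is asymmetric in $k_0$ and $k_1$.
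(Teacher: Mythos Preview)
Your argument is correct: the explicit composition of the two block decompositions via $p_i = m_{m_i'}$, together with the inequality $n \le m_n \le j$ and the monotonicity of $(A_k)_k$, is exactly the direct unfolding the paper has in mind. The paper itself gives no proof of this fact, merely noting that it ``follows easily from the above definitions,'' so your write-up is a faithful expansion of that remark.
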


We close this section with a fusion lemma that  we shall need
later on.
\begin{lem}\label{Cf9}
 Let $(\vt_n)_{n=0}^ \infty$ be a sequence  in  $ \V$ and $(w_n(x))_{n=0}^ \infty$
 be a sequence of variable
 words. Also let  $(k_n)_{n=0}^\infty$ and $(m_n)_{n=0}^\infty$ with $m_0=0$ be two
 sequences  in $
 \nn$. Let $\vt_n=(t_i^{(n)}(x))_{i=0}^\infty$ for all
 $n\in\nn$ and assume that for every $n \geq 1 $ the following are satisfied.
\begin{enumerate}
  \item[(i)] $m_{n} \geq 1$ and $k_{n}=k_{n-1}+m_{n}$.

 \item[(ii)]$w_{n-1}(x) \in [( t_i^{(n-1)}(x))_{i=0}^{m_{n}-1}\; \|\;
  (A_{k_{n-1}+i})_{i=0}^{m_{n}-1} ]_v$.

 \item[(iii)]$\vt _{n}$ is a reduced $k_{n}$-block subsequence of
 $(t_{i}^{(n-1)}(x))_{i=m_{n}}^\infty$.
\end{enumerate}
Then there exists a strictly increasing sequence $(p_n)_{n=0}^
\infty$ in $ \nn$ with $p_0=0$ such that for every $n \in \nn$ the
following are satisfied.
\begin{enumerate}

\item[(C1)] $k_0+p_n \geq k_n$.

\item[(C2)] $w_n(x) \in [(t_i^{(0)}(x))_{i=p_n}^{p_{n+1}-1} \; \|
\; (A_{k_0+i})_{i=p_n}^{p_{n+1}-1}]_v$.

\item[(C3)] $(w_i(x))_{i=n}^\infty$ is a reduced $k_n$-block
subsequence of $ \vt_n$.

\end{enumerate}
\end{lem}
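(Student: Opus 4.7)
The plan is to define $p_n$ as the flat position in $\vt_0$ at which $\vt_n$ begins, obtained by iterating the block structure supplied by (iii), and then to read off (C1)--(C3) by bookkeeping.

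The core step is an induction on $n$ producing, for each $n\in\nn$, a strictly increasing sequence $(\alpha_r^{(n)})_{r=0}^\infty$ in $\nn$ such that
\[
t_r^{(n)}(x)\in\bigl[(t_u^{(0)}(x))_{u=\alpha_r^{(n)}}^{\alpha_{r+1}^{(n)}-1}\;\|\;(A_{k_0+u})_{u=\alpha_r^{(n)}}^{\alpha_{r+1}^{(n)}-1}\bigr]_v
\]
for every $r\in\nn$. The base case uses $\alpha_r^{(0)}=r$. For the step, hypothesis (iii) together with Definition \ref{block}(i) yields $0=M_0<M_1<\cdots$ such that $t_r^{(n+1)}(x)$ lies in the reduced variable span of $(t_{m_{n+1}+j}^{(n)}(x))_{j=M_r}^{M_{r+1}-1}$ over the alphabets $(A_{k_{n+1}+j})$; since $k_{n+1}=k_n+m_{n+1}$ by (i), re-indexing through $u=m_{n+1}+j$ turns the alphabet index into $A_{k_n+u}$. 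Substituting the inductive expression for each $t_u^{(n)}$ into this representation and concatenating then defines $\alpha_r^{(n+1)}:=\alpha_{m_{n+1}+M_r}^{(n)}$ and establishes the claim at level $n+1$.

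Now set $p_n:=\alpha_0^{(n)}$, so $p_0=0$ and $p_{n+1}=\alpha_{m_{n+1}}^{(n)}$. Strict monotonicity of $(\alpha_r^{(n)})$ gives $\alpha_u^{(n)}\geq p_n+u$, hence $p_{n+1}\geq p_n+m_{n+1}$; iterating and invoking $k_n=k_0+\sum_{i=1}^n m_i$ yields (C1). The same estimate legitimizes the alphabet matching used in the inductive step: a letter $b\in A_{k_n+u}$ substituted into $t_u^{(n)}$ ends up at flat positions $v\in[\alpha_u^{(n)},\alpha_{u+1}^{(n)})$, and $b\in A_{k_0+v}$ follows from $k_n+u\leq k_0+\alpha_u^{(n)}\leq k_0+v$ together with monotonicity of $(A_n)_{n=0}^\infty$. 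For (C2), combine hypothesis (ii) with the expanded expression for each $t_u^{(n)}(b_u)$; since at least one $b_u=x$ and each $t_u^{(n)}(x)$ is genuinely variable, some $x$ survives into the flat expansion, placing $w_n(x)$ in the reduced \emph{variable} (not merely constant) span of $(t_v^{(0)}(x))_{v=p_n}^{p_{n+1}-1}$ with the correct alphabets. For (C3), I run the same inductive construction with base level $n$ instead of $0$; this exhibits each $w_{n+j}(x)$ inside a consecutive block of $\vt_n$ with alphabets $(A_{k_n+u})$, matching Definition \ref{block}(ii). The only real obstacle is the nested alphabet bookkeeping, and the identity $k_n=k_{n-1}+m_n$ from (i) is exactly the arithmetic that makes the alphabet increments align with the block lengths so the cascade closes up.
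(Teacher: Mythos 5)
Your proof is correct, and the underlying mechanism is the same as the paper's: an inductive position bookkeeping through the nested block structure, with the identity $k_n=k_{n-1}+m_n$ and the monotonicity of $(A_n)_{n=0}^\infty$ ensuring that letters pushed to deeper flat positions still lie in the required alphabets, and with the inequality $k_0+p_n\geq k_n$ (that is, (C1)) feeding back into the induction to legitimize the alphabet matching. The organization differs, though. The paper fixes $n$ and records only the starting position $p_n^l$ of $w_n(x)$ relative to each intermediate level $l\leq n$, building a triangular array via a one-step descent tool (Sublemma \ref{extralemma}) whose condition (d) is a span-inclusion surrogate for your explicit position maps; you instead fix the level $n$ and record the complete decomposition map $\alpha^{(n)}$ of $\vt_n$ into $\vt_0$, composing these maps level by level. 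The payoff of the paper's array is that (C1)--(C3) all drop out of the single relation \eqref{eq1} by specializing $l=0$ or $l=n_0$, whereas you must re-run your construction with base level $n_0$ in place of $0$ to get (C3) --- harmless, since hypotheses (i)--(iii) are invariant under that re-basing ($m_0=0$ is never used), but it is the one place your argument does extra work. You correctly isolate the two points on which everything hinges: the uniqueness of the block decomposition of a reduced block subsequence (which makes the $M_r$ well defined for the infinite sequence), and the survival of at least one occurrence of $x$ under the flat expansion, which is what keeps $w_n(x)$ in the variable rather than the constant span in (C2).
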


For the proof of Lemma \ref{Cf9}, we shall need the following.

\begin{sublem}\label{extralemma}
  Let $\vt= (t_n(x))_{n=0}^\infty, \vt ^{\;\prime} =(t^\prime _n(x))_{n=0}^\infty \in \V$
  and let $k, k^\prime, p_1, p_1^\prime$ in $\nn$
  satisfying the following properties.
  \begin{enumerate}
    \item[(i)] $k+p_1 \leq k^\prime +p_1 ^\prime$.
    \item[(ii)] $ (t_i(x))_{i=p_1}^\infty \unlhd_{k^\prime + p_1
    ^\prime} (t^\prime _i(x))_{i=p_1^\prime}^\infty$.
  \end{enumerate}
Let $p_2 \in \nn$ with $p_2 > p_1$. Then there exists a $p_2
^{\;\prime} \in \nn$  with $p_2 ^{\;\prime} > p_1 ^\prime$  such
that the following hold.
\begin{enumerate}
  \item[(a)] $ p_2  - p_1\leq p_2 ^\prime - p_1 ^\prime$.
   \item[(b)]$k+p_2 \leq k^\prime +p_2 ^\prime$.
\item[(c)]$ (t_i(x))_{i=p_2}^\infty \unlhd_{k^\prime + p_2
    ^\prime} (t^\prime _i(x))_{i=p_2^\prime}^\infty$.
    \item[(d)] $ [(t_i(x))_{i=p_1}^{p_2-1} \; \| \;
  (A_{k+i})_{i=p_1}^{p_2-1}]_v \subseteq
  [(t_i^\prime(x))_{i=p_1'}^{p_2'-1}
  \; \| \; (A_{k'+i})_{i=p_1'}^{p_2'-1}]_v.$
\end{enumerate}
\end{sublem}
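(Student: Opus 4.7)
The plan is to unfold assumption (ii) into an explicit block decomposition and then define $p_2'$ as the natural chunk boundary corresponding to the first $p_2-p_1$ words of the restricted tail. Applying Definition \ref{block} to (ii) yields a strictly increasing sequence of non-negative integers $p_1'=M_0<M_1<M_2<\cdots$ such that for every $i\geq 0$,
\[
t_{p_1+i}(x)\in [(t'_j(x))_{j=M_i}^{M_{i+1}-1}\;\|\;(A_{k'+j})_{j=M_i}^{M_{i+1}-1}]_v.
\]
Since $M_0=p_1'$ and the $M_i$ are strictly increasing, one has $M_i\geq p_1'+i$ for every $i$. I would then set $p_2':=M_{p_2-p_1}$; in particular $p_2'>p_1'$ because $p_2-p_1\geq 1$.

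Conclusions (a), (b), (c) are then short. Inequality (a) is just $p_2'-p_1'=M_{p_2-p_1}-p_1'\geq p_2-p_1$, and adding this to hypothesis (i) gives (b). For (c), setting $N_i:=M_{p_2-p_1+i}-p_2'$ produces a strictly increasing sequence with $N_0=0$ which witnesses that $(t_i(x))_{i=p_2}^\infty$ is a reduced $(k'+p_2')$-block subsequence of $(t'_j(x))_{j=p_2'}^\infty$; this is immediate from the displayed membership above after shifting indices by $p_2-p_1$ on the left and by $p_2'$ on the right.

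The main obstacle is (d): one must check that when each $t_i(x)$ is expanded via the block decomposition, the letters of $A_{k+i}$ fit inside the alphabet $A_{k'+j}$ of the target chunk. Take a typical element $u=t_{p_1}(b_{p_1})\cdots t_{p_2-1}(b_{p_2-1})$ of the left-hand span, with $b_i\in A_{k+i}\cup\{x\}$ and some $b_{i_0}=x$. The block decomposition writes $t_i(x)=t'_{M_{i-p_1}}(c_{M_{i-p_1},i})\cdots t'_{M_{i-p_1+1}-1}(c_{M_{i-p_1+1}-1,i})$ with $c_{j,i}\in A_{k'+j}\cup\{x\}$ and at least one $c_{j,i}=x$. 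Substituting $b_i$ yields letters $c_{j,i}(b_i)$, and the only case requiring scrutiny is $c_{j,i}=x$ with $b_i\in A_{k+i}$, where we need $A_{k+i}\subseteq A_{k'+j}$, i.e.\ $k+i\leq k'+j$. This holds because $j\geq M_{i-p_1}\geq p_1'+(i-p_1)$ and, by (i), $k'+p_1'\geq k+p_1$, so $k'+j\geq k+i$. A variable slot is retained, since substituting $b_{i_0}=x$ into any $c_{j,i_0}=x$ still gives $x$. Concatenating the expansions of $t_i(b_i)$ for $i\in[p_1,p_2-1]$ exhibits $u$ as an element of the right-hand span, completing the argument.
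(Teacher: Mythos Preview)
Your proof is correct and follows essentially the same approach as the paper: the paper also unfolds assumption (ii) via Definition~\ref{block}, takes $p_2'=p_1'+d$ where $d$ is the length of the block of $\vt^{\,\prime}$ absorbing $t_{p_1}(x)\cdots t_{p_2-1}(x)$ (i.e.\ your $M_{p_2-p_1}-p_1'$), and derives (a)--(d) in the same way. Your treatment of (d) is simply more explicit about why the alphabet inclusions $A_{k+i}\subseteq A_{k'+j}$ hold, whereas the paper just cites monotonicity of $(A_n)_n$ together with hypothesis (i).
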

\begin{proof}
Using Definition
\ref{block} and assumption (ii)  we find a unique  positive integer
$d$
  such that
\begin{equation}\label{equat1}
  t_{p_1}(x)...t_{p_2-1}(x) \in [(t^\prime _i(x))_{i=p_1 ^\prime}
  ^{p_1'+d-1} \; \| \; (A_{k^\prime  +i})
  _{i=p^\prime _1}^{p^\prime _1 +d-1}]_v
  \end{equation} and
  \begin{equation}\label{equat1b}  (t_i(x))_{i=p_2}^\infty \unlhd_{k^\prime +
  p_1
    ^\prime+d } (t^\prime _i(x))_{i=p_1'+d}^\infty.\end{equation}
By  \eqref{equat1} we easily conclude that
\begin{equation}\label{equat1c}
p_2-p_1\leq d.
\end{equation}
Moreover, since the sequence of the alphabets $(A_n)_n$ is
increasing,  by \eqref{equat1} and assumption (i),  we get that
\begin{equation}\label{equat1d}
[(t_i(x))_{i=p_1}^{p_2-1} \; \| \;
  (A_{k+i})_{i=p_1}^{p_2-1}]_v \subseteq
  [(t_i^\prime(x))_{i=p_1'}^{p_1'+d-1}
  \; \| \; (A_{k'+i})_{i=p_1'}^{p_1'+d-1}]_v.
\end{equation}
We set $p_2'=p_1'+d$. Then (a) follows from  \eqref{equat1c}. Also,
using  (i), we have that
    $$k+p_2 = k+p_1 +(p_2-p_1)  \leq
    k^\prime +p^\prime _1 + d = k^\prime +p^\prime _2$$ and so  (b) is 
     satisfied too.
    Finally, using \eqref{equat1b} and \eqref{equat1d} we easily 
    derive  (c) and
    (d) respectively.
\end{proof}

\begin{proof}[Proof of Lemma \ref{Cf9}.]  By induction
  for every $n \geq 1$ we select a finite sequence $(p_n ^l)_{l=0}^n$
in $ \nn$ with $p_n^n =0$ and  $p_{n}^{n-1} =m _{n}$
satisfying for every $n \geq 1$ the following conditions.

\begin{enumerate}

\item[(a)] $p_{n-1}^l<p_{n}^l$ for all $0\leq l \leq n-1$ and for every
$1 \leq l \leq n-1$ we have $$p_n^l - p_{n-1}^l \leq p_n^{l-1} -
p_{n-1}^{l-1}.$$

\item[(b)] For every   $1 \leq l \leq n$ we have
$$k_{l}+p_n
^{l}\leq k_{l-1}+p_n^{l-1}.$$

\item[(c)] For every   $1 \leq l \leq n$ we have
$$(t_i^{(l)}(x))_{i=p_n^{l}}^\infty \unlhd _{k_{l-1} + p_n^{l-1}}
(t_i^{(l-1)}(x))_{i=p_n^{l-1}}^\infty.$$

\item[(d)] For every   $1 \leq l \leq n-1$ we have
$$ [(t_i^{(l)}(x))_{i=p_{n-1}^{l}}^{p_{n}^l-1} \; \| \;
  (A_{k_{l}+i})_{i=p_{n-1}^{l}}^{p_{n}^{l}-1}]_v \subseteq
  [(t_i^{(l-1)}(x))_{i=p_{n-1}^{l-1}}^{p_{n}^{l-1}-1}
  \; \| \; (A_{k_{l-1}+i})_{i=p_{n-1}^{l-1}}^{p_{n}^{l-1}-1}]_v.$$
\end{enumerate}
The above inductive construction is easily carried out using
Sublemma \ref{extralemma}.

We set $p_0 =0$ and $p_n  =p_n^0$, for all $n \geq 1$. We claim
that the sequence $(p_n)_{n=0}^ \infty$ is as desired. Indeed, by
(a) we have $p_{n-1}^0<p_{n}^0$ for all $n \geq 1$ and hence the
sequence $(p_n)_{n=0}^ \infty$ is strictly increasing. Moreover,
by (b) we easily get (C1). Finally,  by (ii) and (d) of Lemma \ref{Cf9},
it follows
that for every $n\in \nn$ and every $0 \leq l \leq n$ we have
\begin{equation} \label{eq1} w_{n}(x) \in [(
  t_i^{(l)}(x))_{i=p_{n}^l}^{p_{n+1}^l-1}\; \|\;
  (A_{k_l+i})_{i=p_{n}^l}^{p_{n+1}^l-1} ]_v.\end{equation}
Setting $l=0$ in \eqref{eq1} we derive (C2). To verify (C3) we fix
$n_0\in\nn$. Setting $l=n_0$ in \eqref{eq1} we get that
$(w_n(x))_{n=n_0}^\infty $ is a reduced $k_{n_0}$-block
subsequence of $ \vt_{n_0}$ and the proof of the lemma is
complete.
\end{proof}

\subsubsection{Shifting sequences of variable words.}
 Let $s(x)$ be a variable word over $A$. By $s(x)^*$ we denote
the maximal initial segment of $s(x)$ which is a constant word and
by $s(x)^{**}$ the maximal final segment of $s(x)$ for which is a
left variable word.

Clearly, $ s(x)=s(x)^*s(x)^{**}$ for every $s(x)\in V(A)$.
Moreover, if  $s(x)$ is a left variable word then
 $s(x)^*$ is the empty word.

\begin{defn} We define a map $S: \V \rightarrow \V$  as follows.
For every
 $\vs =(s_n(x))_{n=0}^ \infty\in \V$ we set
$S(\vs)=(w_n(x))_{n=0}^ \infty$, where $w_0(x)=s_0(x)s_1(x)^*$  and for every
$n \geq 1$, $w_n(x)=s_n(x)^{**} s_{n+1}(x)^*$.
\end{defn}
Notice that for every $\vs\in \V$, $S(\vs)$ is a sequence of
variable words which all  except perhaps the first, are left
variable words. In addition, for every  $\vs=(s_n(x))_{n=0}^ \infty\in \V$
such that $s_n(x)$ is a left variable word for all $n\geq 1$, we
have that  $S(\vs)=\vs$.

It is easy to see that the map  $S$ preserves the infinite reduced
$k$-block subsequences. More precisely, we have the following.

\begin{fact}\label{Cf11}
  Let $k\in \nn$ and
     $\vs, \vt \in \V$. If $\vt \unlhd_k \vs$ then $S(\vt)\unlhd_k
S(\vs)$.
\end{fact}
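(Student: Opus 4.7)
The plan is to verify $S(\vt) \unlhd_k S(\vs)$ directly, by writing each term of $S(\vt)$ as an explicit concatenation of consecutive terms of $S(\vs)$ in which the substituted letters come from the appropriate alphabets and at least one position receives $x$. The algebraic engine is the elementary identity
\[ s(x)^* \cdot \bigl[s(x)^{**} \text{ with } x \text{ replaced by } c\bigr] = s(c), \qquad c\in A\cup\{x\}, \]
which follows from $s(x) = s(x)^*\, s(x)^{**}$ together with the fact that $s(x)^*$ contains no variable.

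First I would fix witnesses $0 = m_0 < m_1 < \cdots$ for $\vt \unlhd_k \vs$: for each $n$ one has $t_n(x) = s_{m_n}(b^n_{m_n}) \cdots s_{m_{n+1}-1}(b^n_{m_{n+1}-1})$ with $b^n_j \in A_{k+j}\cup\{x\}$ and at least one $b^n_j = x$. Let $r_n \in \{m_n,\dots,m_{n+1}-1\}$ be the least index with $b^n_{r_n}=x$. Direct inspection then gives
\[ t_n(x)^* = s_{m_n}(b^n_{m_n})\cdots s_{r_n-1}(b^n_{r_n-1})\, s_{r_n}(x)^*, \]
\[ t_n(x)^{**} = s_{r_n}(x)^{**}\, s_{r_n+1}(b^n_{r_n+1})\cdots s_{m_{n+1}-1}(b^n_{m_{n+1}-1}). \]
Iterating the identity above on $S(\vs)=(w_j(x))_{j=0}^\infty$ then yields, for any $0\leq M_1\leq M_2$ and any $c_j\in A_{k+j}\cup\{x\}$, the closed form
\[ w_{M_1}(c_{M_1})\cdots w_{M_2}(c_{M_2}) = \beta_{M_1}\cdot s_{M_1+1}(c_{M_1+1})\cdots s_{M_2}(c_{M_2})\cdot s_{M_2+1}(x)^*, \]
where $\beta_0 = s_0(c_0)$ and, for $M_1\geq 1$, $\beta_{M_1}$ is the result of substituting $c_{M_1}$ for $x$ in $s_{M_1}(x)^{**}$.

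Next, setting $M_0 = 0$ and $M_n = r_n$ for $n \geq 1$, I would match the two formulas block by block. For $n \geq 1$, choosing $c_{r_n}=x$ and $c_j$ equal to the corresponding $b^n_j$ or $b^{n+1}_j$ for the remaining indices in $\{r_n+1,\dots,r_{n+1}-1\}$ (split at $m_{n+1}$), the telescoping formula collapses $w_{r_n}(c_{r_n})\cdots w_{r_{n+1}-1}(c_{r_{n+1}-1})$ precisely to $t_n(x)^{**}\, t_{n+1}(x)^*$, which is the $n$-th entry of $S(\vt)$. The case $n=0$ is analogous via the $\beta_0$ branch of the formula and produces $t_0(x)\,t_1(x)^*$. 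Since $r_n < m_{n+1} \leq r_{n+1}$ the sequence $(M_n)_{n=0}^\infty$ is strictly increasing, and the coordinate $c_{r_n}=x$ in every block guarantees that the result lies in the reduced \emph{variable} span, so $S(\vt) \unlhd_k S(\vs)$.

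The main obstacle I expect is index bookkeeping: the natural splitting points of $S(\vt)$ inside $S(\vs)$ are the positions $r_n$ of the first variable of each $t_n$, not the original block boundaries $m_n$, and the initial case $n=0$ has a slightly different shape because $w_0(x)$ and $t_0(x)\,t_1(x)^*$ retain a leading $s_0$ rather than $s_0^{**}$. Once the telescoping identity absorbs the intermediate constant factors between consecutive blocks, however, the verification becomes mechanical.
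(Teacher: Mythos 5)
Your proof is correct: the telescoping identity $s(x)^*\cdot\bigl[s(x)^{**}\text{ with }x\mapsto c\bigr]=s(c)$, together with choosing the new block boundaries at the positions $r_n$ of the first variable occurrence in each $t_n$ (and $c_{r_n}=x$ to stay in the variable span), does yield $S(\vt)\unlhd_k S(\vs)$, and the endpoint checks ($0<r_1$, $r_n<m_{n+1}\le r_{n+1}$, $c_j\in A_{k+j}\cup\{x\}$) all go through. The paper states this fact without proof (``it is easy to see''), and your verification is precisely the argument it leaves implicit.
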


\subsubsection{The notion of $(S,k)$-large families}
The next definition is crucial for the proof of Theorem
\ref{thm2}.
\begin{defn}\label{defn large}
  Let $k \in \nn$, $E \subseteq W(A)$  and $\vs\in\V$.
  Then $E$ will be called
  $(S,k)$-\textit{large} in $\vs$ if $$E \cap [S( \vw ) \  \| \
  (A_{k+n})_{n=0}^ \infty]_c \neq \emptyset,$$
  for every infinite reduced $k$-block subsequence
  $\vw$ of $\vs$.\end{defn}
In other words  $E$ is $(S,k)$-large in $\vs$ if for every
infinite reduced $k$-block subsequence  $\vw=(w_n(x))_{n=0}^ \infty$ of $\vs$ there
  exist $m \in \nn$ and  $a_i\in A_{k+i}$ for every $0\leq i\leq m$ such that
  $$w_0(a_0) ... w_{m}(a_{m}) w_{m+1}(x)^* \in E.$$
  By Fact \ref{Cf8} we easily obtain the following.
\begin{fact}\label{Cf13}
   Let $k \in \nn$, $E \subseteq W(A)$ and
  $\vec{s} \in \V$
   such that $E$ is $(S,k)$-large in $\vs$. Then
    for every infinite reduced $k$-block subsequence
$\vt$ of  $\vs$ we have that $E$ is $(S,k)$-large in $\vt$.
\end{fact}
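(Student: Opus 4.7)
The plan is to unfold the definition of $(S,k)$-largeness for $\vt$ and reduce it directly to the assumed $(S,k)$-largeness of $E$ in $\vs$ via Fact \ref{Cf8}.

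First I would fix an arbitrary infinite reduced $k$-block subsequence $\vw$ of $\vt$, i.e.\ $\vw\unlhd_k\vt$, and aim to show
\[
E\cap [S(\vw)\;\|\;(A_{k+n})_{n=0}^\infty]_c\neq\emptyset.
\]
Since by hypothesis $\vt\unlhd_k\vs$, I would apply Fact \ref{Cf8} with $k_0=k_1=k$ to the chain $\vw\unlhd_k\vt\unlhd_k\vs$ and conclude that $\vw\unlhd_k\vs$; in other words, $\vw$ is itself an infinite reduced $k$-block subsequence of $\vs$.

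Next, invoking the assumption that $E$ is $(S,k)$-large in $\vs$, the defining condition applied to this particular $\vw$ yields $E\cap [S(\vw)\;\|\;(A_{k+n})_{n=0}^\infty]_c\neq\emptyset$, which is precisely what needs to be verified. Since $\vw$ was an arbitrary element of the set of infinite reduced $k$-block subsequences of $\vt$, this proves that $E$ is $(S,k)$-large in $\vt$.

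I do not anticipate any real obstacle: the argument is a one-line consequence of the transitivity-type statement in Fact \ref{Cf8}, and it needs no intervention from the map $S$, from the fusion Lemma \ref{Cf9}, or from any finer structural property of the reduced spans. The only thing to be slightly careful about is applying Fact \ref{Cf8} with equal indices $k_0=k_1=k$, which is a legitimate instance of its hypothesis $k_0\leq k_1$.
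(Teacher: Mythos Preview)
Your proposal is correct and matches the paper's own approach exactly: the paper simply states that Fact~\ref{Cf13} follows from Fact~\ref{Cf8}, and your write-up is precisely the natural unpacking of that remark.
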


\begin{lem}\label{Cl14}
   Let $k \in \nn$, $E \subseteq W(A)$ and
  $\vec{s} \in \V$
   such that $E$ is $(S,k)$-large in $\vs$. Let $r\geq 2$   and let
   $E=\bigcup_{i=1}^r
   E_i$.
    Then there exist $1\leq i \leq r$ and an
   infinite reduced $k$-block subsequence $\vt$
   of $\vs$ such that $E_i$ is $(S,k)$-large in $\vt$.
\end{lem}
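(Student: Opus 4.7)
The plan is to argue by contradiction using a finite diagonal construction, relying on the stability properties of $(S,k)$-largeness under reduced $k$-block subsequences established by Facts \ref{Cf7}, \ref{Cf8}, \ref{Cf11} and \ref{Cf13}.

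Suppose, towards a contradiction, that for every $1 \le i \le r$ and every infinite reduced $k$-block subsequence $\vt$ of $\vs$, the set $E_i$ fails to be $(S,k)$-large in $\vt$. My plan is to construct inductively a decreasing chain
\[
\vs = \vs_0 \; \unrhd_k \; \vs_1 \; \unrhd_k \; \cdots \; \unrhd_k \; \vs_r
\]
of infinite reduced $k$-block subsequences of $\vs$ such that, for every $1 \le i \le r$,
\[
E_i \cap [S(\vs_i) \; \| \; (A_{k+n})_{n=0}^\infty]_c = \emptyset.
\]
At stage $i \ge 1$, I have already built $\vs_{i-1}$; by Fact \ref{Cf8}, $\vs_{i-1}$ is a reduced $k$-block subsequence of $\vs$, so by the contradiction hypothesis applied with $\vt = \vs_{i-1}$, the set $E_i$ is not $(S,k)$-large in $\vs_{i-1}$. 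Unfolding Definition \ref{defn large}, this yields an infinite reduced $k$-block subsequence $\vs_i$ of $\vs_{i-1}$ with $E_i \cap [S(\vs_i) \; \| \; (A_{k+n})_{n=0}^\infty]_c = \emptyset$, as required.

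Once the chain is built, I observe two things about the terminal object $\vs_r$. First, for every $1 \le i \le r$, Fact \ref{Cf8} gives $\vs_r \unlhd_k \vs_i$, hence by Fact \ref{Cf11} we have $S(\vs_r) \unlhd_k S(\vs_i)$, and Fact \ref{Cf7} then yields
\[
[S(\vs_r) \; \| \; (A_{k+n})_{n=0}^\infty]_c \; \subseteq \; [S(\vs_i) \; \| \; (A_{k+n})_{n=0}^\infty]_c.
\]
Consequently,
\[
E \cap [S(\vs_r) \; \| \; (A_{k+n})_{n=0}^\infty]_c \;=\; \bigcup_{i=1}^{r} E_i \cap [S(\vs_r) \; \| \; (A_{k+n})_{n=0}^\infty]_c \;=\; \emptyset.
\]
Second, since $\vs_r \unlhd_k \vs$ and $E$ is $(S,k)$-large in $\vs$, Fact \ref{Cf13} ensures that $E$ is $(S,k)$-large in $\vs_r$; applying largeness to the trivial reduced $k$-block subsequence $\vw = \vs_r$ of $\vs_r$ itself forces $E \cap [S(\vs_r) \; \| \; (A_{k+n})_{n=0}^\infty]_c \ne \emptyset$. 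This contradicts the previous display and completes the proof.

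I do not expect any substantial obstacle: the argument is a clean finite pigeonhole, and the heavy lifting (monotonicity of spans under $\unlhd_k$, compatibility of $S$ with $\unlhd_k$, and hereditariness of $(S,k)$-largeness) has already been packaged into Facts \ref{Cf7}, \ref{Cf8}, \ref{Cf11} and \ref{Cf13}. The only delicate point is remembering that $\vs_r$ is a (trivial) reduced $k$-block subsequence of itself, so that largeness of $E$ in $\vs_r$ can be instantiated to produce a nonempty intersection.
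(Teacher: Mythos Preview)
Your proof is correct and follows essentially the same approach as the paper: the paper first treats the case $r=2$ by constructing $\vt_1 \unlhd_k \vs$ avoiding $E_1$ and then $\vt_2 \unlhd_k \vt_1$ avoiding $E_2$, invokes Facts \ref{Cf7}, \ref{Cf8} and \ref{Cf11} to obtain the contradiction, and finishes the general case by induction, whereas you carry out the full length-$r$ chain directly. The ingredients and the logic are identical.
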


\begin{proof}
We first show the result for   $r=2$.
  Assume to the contrary that for every  infinite reduced $k$-block
  subsequence $\vt$
   of $\vs$ neither  $E_1$ nor $E_2$ is $(S,k)$-large in $\vt$.

   Then $E_1$ is not
   $(S,k)$-large in $\vs$ and so   there exists
  $\vt _1 \unlhd_k \vs$ such that $$ E_1 \cap [S( \vt_1 ) \  \| \
  (A_{k+n})_{n=0}^ \infty]_c  =
  \emptyset.$$
  Similarly,   $E_2$ is not $(S,k)$-large in $\vt_1$ and so  there
  exists
  $\vt _2 \unlhd_k \vt_1$ such that $$ E_2 \cap [S( \vt_2 ) \  \| \
  (A_{k+n})_{n=0}^ \infty]_c  =
  \emptyset.$$
  Since $\vt _2 \unlhd_k \vt_1$, by   Fact \ref{Cf8} we have that
  $\vt _2 \unlhd_k \vs$.
  Moreover, by   Fact \ref{Cf11} we get that $S(\vt _2) \unlhd_k S(\vt_1)$
  and therefore by  Fact \ref{Cf7}
  we have that
$$[S( \vt_2 ) \  \| \ (A_{k+n})_{n=0}^ \infty]_c\subseteq [S( \vt_1 ) \  \| \
(A_{k+n})_{n=0}^ \infty]_c.$$ Hence, $E \cap [S( \vt_2 ) \ \| \
(A_{k+n})_{n=0}^ \infty]_c= \emptyset$,
    a contradiction
   since  $E$ is $(S,k)$-large in $\vs$  and  $t_2 \unlhd_k \vs$.
   Hence, the lemma holds true for $r=2$. Using  induction
   the result follows.
\end{proof}

\subsection{The main arguments}
We pass now to the core of the proof. We will need  the next
definition.
\begin{defn}  Let  $E$ and  $ F$ be non empty subsets of  $W(A)$. We
define
  \[E_F=\{z\in W(A):w z \in E \text{ for every }w\in
  F\}.\]
\end{defn}

Notice that for every $F_1, F_2\subseteq W(A)$ we have
     $(E_{F_1})_{_{F_2}}= E_{F_1  F_2}$, where $F_1  F_2 =
     \{w_1 w_2 : w_1 \in F_1, w_2 \in F_2\}$.

The next lemma is the first main step towards the proof of Theorem
\ref{thm2} and it  is the point where we use the Hales--Jewett
theorem.

\begin{lem}\label{Cl16}
   Let $k \in \nn$, $E \subseteq W(A)$ and $\vs=(s_n(x))_{n=0}^ \infty \in \V$
    such that $E$ is $(S,k)$-large in $\vs$. Then there exist
    $ m\geq1$,
    $w(x) \in [(s_n(x))_{n=0}^{m-1}\ \| \   (A_{k+n})_{n=0}^{m-1}]_v$
   and $\vt \in \V$ with $\vt \unlhd _{k+m} (s_{n}(x))_{n=m}^\infty$ such that
   setting $F=\{w(a):a \in A_{k}\}$ then
   $E_F$ is $(S,(k+m))$-large in $\vt$.
\end{lem}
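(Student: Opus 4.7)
My plan is to apply the Hales--Jewett theorem to a finite coloring of $A_k^N$ derived from $E$ and the tail of $\vs$, extract a variable word $w(x)$ such that the coloring is constant on $\{w(a):a\in A_k\}$, and then use iterated applications of Lemma~\ref{Cl14} together with Fact~\ref{Cf13} to produce the block subsequence $\vt$ in which $E_F$ is $(S,k+m)$-large.

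Set $p=|A_k|$. For $N$ to be chosen as a Hales--Jewett number, consider the constant words $u_{\bar a}=s_0(a_0)\cdots s_{N-1}(a_{N-1})$ for $\bar a=(a_0,\ldots,a_{N-1})\in A_k^N$. Each variable word $w(x)\in[(s_n(x))_{n=0}^{N-1}\;\|\;A_k]_v$ corresponds to a combinatorial line in $A_k^N$, and its substitutions $\{w(a):a\in A_k\}$ correspond to the points on that line. I color $\bar a$ by $\chi(\bar a)=1$ iff $E_{u_{\bar a}}$ is $(S,k+N)$-large in $(s_n(x))_{n\geq N}$ (understood as shorthand for $E_{\{u_{\bar a}\}}$). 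With $N=HJ(p,2)$, Theorem~\ref{thm1} yields $w(x)\in[(s_n(x))_{n=0}^{N-1}\;\|\;A_k]_v$ on which $\chi$ is constant over $\{w(a):a\in A_k\}$.

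Next, I rule out the ``non-large'' color. Suppose each $E_{w(a)}$ failed to be $(S,k+N)$-large in $(s_n(x))_{n\geq N}$; then for each $a\in A_k$ one obtains a witness $\vec u_a\unlhd_{k+N}(s_n(x))_{n\geq N}$ with $E_{w(a)}\cap[S(\vec u_a)\;\|\;(A_{k+N+n})_{n=0}^\infty]_c=\emptyset$. By combining these $p$ witnesses into a single sequence $\vec u^*\unlhd_{k+N}(s_n(x))_{n\geq N}$ (via a diagonal fusion that repeatedly invokes Lemma~\ref{Cl14} on the Venn-diagram partition $W(A)=\bigcup_{\sigma\subseteq A_k}E^\sigma$, where $E^\sigma=\{z\in W(A):\{a\in A_k:w(a)z\in E\}=\sigma\}$, together with Lemma~\ref{Cf9} to fuse the refinements), one obtains $\vw=(w(x),u_0^*(x),u_1^*(x),\ldots)\unlhd_k\vs$ with $[S(\vw)\;\|\;(A_{k+n})_{n=0}^\infty]_c\cap E=\emptyset$, contradicting the $(S,k)$-largeness of $E$ in $\vs$. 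Hence each $E_{w(a)}$ is $(S,k+N)$-large in $(s_n(x))_{n\geq N}$.

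With each $E_{w(a)}$ large in the tail, I apply Lemma~\ref{Cl14} to the Venn-diagram partition $W(A)=\bigcup_{\sigma\subseteq A_k}E^\sigma$ to obtain $\vt\unlhd_{k+N}(s_n(x))_{n\geq N}$ and a subset $\sigma_*\subseteq A_k$ such that $E^{\sigma_*}$ is $(S,k+N)$-large in $\vt$; by iterating this refinement step and using Fact~\ref{Cf13} to preserve the largeness of each $E_{w(a)}$ through refinements, I force $\sigma_*=A_k$, whence $E^{\sigma_*}=E_F$, and the conclusion holds with $m=N$. The main obstacle lies in these two combining steps: both the fusion of the $p$ ``bad'' witnesses and the forcing of the ``all-yes'' cell $\sigma_*=A_k$ require delicate bookkeeping, since distinct reduced $(k+N)$-block subsequences are not directly comparable and a naive intersection does not yield a valid block subsequence. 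Both steps proceed by careful successive applications of Lemmas~\ref{Cf9} and~\ref{Cl14}, with the $(S,k)$-largeness hypothesis providing the needed contradictions or refinements at each stage.
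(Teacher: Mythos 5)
There is a genuine gap, and it sits at the heart of the lemma. Your Hales--Jewett application is a $2$-colouring of $A_k^N$ by whether $E_{\{u_{\bar a}\}}$ is large, so at best it yields a $w(x)$ such that each set $E_{\{w(a)\}}$, $a\in A_k$, is \emph{individually} $(S,k+N)$-large in the tail. But the lemma needs $E_F=\bigcap_{a\in A_k}E_{\{w(a)\}}$ to be large in some subsequence, and individual largeness does not give this: largeness only asserts that a set meets every span $[S(\vw)\,\|\,(A_{k+N+n})_{n=0}^\infty]_c$, so finitely many large sets can be pairwise disjoint (compare ``words of even length'' versus ``words of odd length''). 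Consequently your final step --- applying Lemma \ref{Cl14} to the partition $W(A)=\bigcup_{\sigma\subseteq A_k}E^\sigma$ and ``forcing $\sigma_*=A_k$'' --- cannot work: Lemma \ref{Cl14} hands you one large cell $E^{\sigma_*}$ in one subsequence with no control over which $\sigma_*$ occurs, and the largeness of every $E_{\{w(a)\}}$ (preserved under refinement by Fact \ref{Cf13}) is perfectly consistent with $E^{\sigma_*}$ being the large cell for a proper $\sigma_*$, since $E^{\sigma_*}\cap E_{\{w(a)\}}=\emptyset$ for $a\notin\sigma_*$ is not a contradiction. The ``rule out the non-large colour'' step suffers from a related defect: to fuse the $p$ witnesses $\vec u_a$ you would need a witness for $a_1$ \emph{inside} the already-chosen $\vec u_{a_0}$, but non-largeness of $E_{\{w(a_1)\}}$ in $(s_n(x))_{n\ge N}$ supplies no such nested witness, because only largeness, not its negation, passes to reduced block subsequences.

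The paper obtains the required simultaneity over $a\in A_k$ \emph{inside} the Hales--Jewett application rather than after it. Claim \ref{Claim17} first uses the $(S,k)$-largeness of $E$ to extract a finite family $(w_i(x))_{i=0}^r$ such that every admissible continuation $w(x)$ admits a completion $w_0(a_0)\cdots w_r(a_r)w(x)^*\in E$; one then colours $A_k^N$ by \emph{which} tuple $(a_0,\dots,a_r)$ works (so $q=\prod_{n=0}^r|A_{k+n}|$ colours, not $2$), and Hales--Jewett returns a combinatorial line all of whose points are completed into $E$ by the \emph{same} prefix $w=w_0(a_0)\cdots w_r(a_r)$. That is precisely the statement $w\,y(b)\,v(x)^*\in E$ for all $b\in A_k$, i.e.\ $v(x)^*\in E_{F_{(w,y(x))}}$, and the finiteness of the set $\mathbb{P}$ of candidate pairs $(w,y(x))$ is what makes the subsequent appeal to Lemma \ref{Cl14} legitimate. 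Your proposal has no analogue of Claim \ref{Claim17}, and without it the $2$-colouring records too little information for any later pigeonholing to recover the intersection $E_F$.
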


\begin{proof} Fix  $k \in \nn$, $E \subseteq W(A)$ and
$\vs=(s_n(x))_{n=0}^ \infty \in \V$
and assume  that $E$ is $(S,k)$-large in $\vs$.

\begin{claim}\label{Claim17}
    There exist $r \in \nn$ and a finite sequence
   $(w_i(x))_{i=0}^r$ with
    $(w_i(x))_{i=0}^r \unlhd _k \vs$ such that for every  $w(x)\in V(A)$
    with  $(w_0(x),..., w_r(x),w(x)) \unlhd _k
    \vs$
   there exists $(a_0,...,a_r) \in \prod_{n=0}^r A_{k+n}$ such that
   $w_0(a_0) ... w_r(a_r) w(x)^* \in E.$
\end{claim}

 \begin{proof}[Proof of Claim \ref{Claim17}]  Assume that the conclusion fails.
 By induction we easily  construct  a sequence of variable words
  $\vw =(w_n(x))_{n=0}^ \infty \in \V$ with $w_0(x)=s_0(x)$,
  $\vw \unlhd_k\vs$ and
 $[S(\vw)\;\|\; (A_{k+n})_{n=0}^ \infty]_c \subseteq E^c$, a contradiction.
\end{proof}

\begin{claim}\label{Claim18} Let  $(w_i(x))_{i=0}^r$ be as  in Claim \ref{Claim17}. Let
$n_0 \geq 1$ be the unique integer such that $w_0(x) ... w_r(x)
\in [(s_n(x))_{n=0}^{n_0-1}\;\|\; (A_{k+n})_{n=0}^{n_0-1}]_v$ and
set
\[ \ q=\prod_{n=0}^{r}|A_{k+n}|, \ N=HJ(|A_k|,q) \ \text{and}
   \ m=n_0+N,\]
   where $HJ(|A_k|,q)$ is as in Theorem \ref{thm1}.

Then for every variable word
 $v(x) \in [(s_n(x))_{n=m}^\infty \;\|\;(A_{k+n})_{n=m}^\infty]_v$ there
 exist a constant word
  $$w\in [(s_n(x))_{n=0}^{n_0-1}\;\|\;
 (A_{k+n})_{n=0}^{n_0-1}]_c$$  and  a variable word
$$y(x) \in [(s_n(x))_{n=n_0}^{n_0+N-1}\;\| \; A_{k}]_v$$
 such that
    $ w y(b)  v(x)^* \in E$  for all  $b \in A_k$.
\end{claim}

 \begin{proof}[Proof of Claim \ref{Claim18}] We set $B= A_k$ and we fix
  $v(x) \in [(s_n(x))_{n=m}^\infty
 \;\|\;(A_{k+n})_{n=m}^\infty]_v$. We will  define a  finite coloring of
 $B^N$, depending on  $v(x)$,
 as follows.
Let $\mathbf{b}=(b_0,...,b_{N-1}) \in
 B^N$ be arbitrary and let
  $z(x)= s_{n_0}(b_0) ...
   s_{n_0+N-1}(b_{N-1}) v(x).$
Since $B=A_k\subseteq A_{k+n}$ for all $n\in\nn$, we have that
$z(x) \in [(s_n(x))_{n=n_0}^\infty
\;\|\;(A_{k+n})_{n=n_0}^\infty]_v.$
  Therefore $(w_0(x),..., w_r(x),z(x))$ is a finite reduced
   $k$-block subsequence of $\vs$. Hence, by Claim \ref{Claim17}
   we may select  $(a_0^{\mathbf{b}},...,a_r^{\mathbf{b}}) \in \prod_{n=0}^{r} A_{k+n}$
    such that
   $ w_0(a_0^{\mathbf{b}}) ... w_r(a_r^{\mathbf{b}}) z(x)^*$
   $\in E$.
We define  $$c_{v(x)}:B^N\to\prod_{n=0}^{r} A_{k+n}$$ by setting
$c_{v(x)}(\mathbf{b})=(a_0^{\mathbf{b}},...,a_r^{\mathbf{b}})$.

Since $q=\prod_{n=0}^{r} |A_{k+n}|$ and   $N=HJ(|B|,q)$, by the
Hales-Jewett Theorem and the way we define $c_{v(x)}$, we conclude
that  there exists a variable word $$ y(x) \in
[(s_n(x))_{n=n_0}^{n_0+N-1}\;\| \; B]_v$$ and an $(r+1)$-tuple
$(a_0,...,a_r) \in \prod_{n=0}^{r} A_{k+n}$ such that $$ w_0(a_0)
... w_r(a_r)y(b) v(x)^* \in E,$$ for all $b \in B$.

We set $w= w_0(a_0) ... w_r(a_r)$. Since $w_0(x)  ... w_r(x) \in
   [(s_n(x))_{n=0}^{n_0-1}\;\|\; (A_{k+n})_{n=0}^{n_0-1}]_v$ we
   easily see that  $w\in[(s_n(x))_{n=0}^{n_0-1}\;\|\;
   (A_{k+n})_{n=0}^{n_0-1}]_c$ and  $wy(b)v(x)^*\in
   E$, for all $b\in B =A_k$.
   \end{proof}

\begin{claim} \label{Claim119} Let  $(w_i(x))_{i=0}^r$ be as  in
Claim \ref{Claim17} and
$n_0$,  $N$ and  $m$  be as in Claim \ref{Claim18}. Let
$$\mathbb{P}=[(w_n(x))_{n=0}^r\;\|\; (A_{k+n})_{n=0}^r]_c \times
[(s_n(x))_{n=n_0}^{n_0+N-1}\;\| \; A_{k}]_v$$
and for every $(w,y(x))\in\mathbb{P}$ let $F_{(w,y(x))}=\{w y(a) :
a \in A_k\}$. Finally, let
\[E^*=\bigcup_{(w,y(x))\in\mathbb{P}} E_{F_{(w,y(x))}},\]
where $E_{F_{(w,y(x))}}=\{z\in W(A): uz \in E \text{ for all }
u\in F_{(w,y(x))}\}$.

Then  $E^*$ is $(S,(k+m))$-large in
 $(s_n(x))_{n=m}^\infty$.
 \end{claim}

\begin{proof}[Proof of Claim \ref{Claim119}]
Let $\vt=(t_n(x))_{n=0}^ \infty$ be an arbitrary reduced $(k+m)$-block
subsequence  of
$(s_n(x))_{n=m}^\infty$. Let  $a_0 \in A_{k+m}$ and set
$v(x)=t_0(a_0) t_1(x).$ Clearly, $$v(x) \in [(s_n(x))_{n=m}^\infty
\;\|\; (A_{k+n})_{n=m}^\infty]_v$$ and so, by Claim \ref{Claim18},
there exists a pair $(w,y(x))\in\mathbb{P}$ such that
$wy(b)v(x)^*\in E$, for all $b\in A_k$ or equivalently
$v(x)^*=t_0(a_0) t_1(x)^* \in  E_{F_{(w,y(x))}}\subseteq E^*$.
Therefore, $E^*$ is $(S,(k+m))$-large in
 $(s_n(x))_{n=m}^\infty$.
\end{proof}

We are now ready to finish the proof of the lemma. Indeed, by
Claim \ref{Claim119} and  Lemma \ref{Cl14}, there exist a pair
$(w_0,y_0(x))\in\mathbb{P}$ and a reduced $(k+m)$-block subsequence $\vt$ of
$(s_n(x))_{n=m}^\infty$ such that $E_{F_{(w_0,y_0(x))}}$ is
$(S,(k+m))$-large in $\vt$. We set $$w(x) =w_0 y_0(x) \text{ and }
F=F_{(w_0,y_0(x))}=\{w(a): a\in A_k\}.$$ Then $w(x) \in
[(s_n(x))_{n=0}^{m-1}\;\|\; (A_{k+n})_{n=0}^{m-1}]_v$ and $E_F$ is
$(S,(k+m))$-large in $\vt$, as desired.
\end{proof}

\begin{lem}\label{Cl20}  Let $k \in \nn$, $E \subseteq W(A)$ and $\vs=
(s_n(x))_{n=0}^ \infty \in \V$ such that
   $E$ is $(S,k)$-large in $\vs$. Then there exist a sequence
   $(w_n(x))_{n=0}^ \infty$ of variable words and two
strictly increasing sequences $(k_n)_{n=0}^ \infty$ and
$(p_n)_{n=0}^ \infty$ in $ \nn $,
with $k_0=k$ and $p_0=0$ such that setting for every $n \in \nn$,
$F_n=[ (w_i(x))_{i=0}^n\;\|\; (A_{k_i})_{i=0}^{n}]_c$ then
for every $n \in \nn$ the
following properties  are satisfied.
\begin{enumerate}
\item[(P1)] $k+p_n \geq k_n$. \item[(P2)]  $w_n(x) \in
[(s_i(x))_{i=p_n}^{p_{n+1}-1} \; \| \;
(A_{k+i})_{i=p_n}^{p_{n+1}-1}]_v$. \item[(P3)]  $E_{F_n}$ is
$(S,k_{n+1})$-large in $(w_{i}(x))_{i=n+1}^\infty$.
\end{enumerate}
\end{lem}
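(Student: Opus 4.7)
The plan is to iterate Lemma \ref{Cl16} and assemble the resulting data via the fusion Lemma \ref{Cf9}. Setting $\vt_0=\vs$, $k_0=k$, and adopting the convention $E_{F_{-1}}=E$, I would construct inductively sequences $(\vt_n)_{n=0}^\infty\subseteq\V$, variable words $(w_n(x))_{n=0}^\infty$, and integers $k_n,m_{n+1}\in\nn$ as follows. Assume that $E_{F_{n-1}}$ is $(S,k_n)$-large in $\vt_n=(t^{(n)}_i(x))_{i=0}^\infty$. Apply Lemma \ref{Cl16} to the triple $(E_{F_{n-1}},k_n,\vt_n)$ to obtain $m_{n+1}\geq 1$, a variable word $w_n(x)\in[(t^{(n)}_i(x))_{i=0}^{m_{n+1}-1}\;\|\;(A_{k_n+i})_{i=0}^{m_{n+1}-1}]_v$, and $\vt_{n+1}\in\V$ with $\vt_{n+1}\unlhd_{k_n+m_{n+1}}(t^{(n)}_i(x))_{i=m_{n+1}}^\infty$, satisfying (with $k_{n+1}=k_n+m_{n+1}$, $F^{(n)}=\{w_n(a):a\in A_{k_n}\}$, and $F_n=F_{n-1}\cdot F^{(n)}$) that $(E_{F_{n-1}})_{F^{(n)}}$ is $(S,k_{n+1})$-large in $\vt_{n+1}$. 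By the composition property $(E_{F_{n-1}})_{F^{(n)}}=E_{F_n}$ recorded just before Lemma \ref{Cl16}, this closes the induction, and a telescoping unfolding of the definition of $F_n$ yields $F_n=[(w_i(x))_{i=0}^n\;\|\;(A_{k_i})_{i=0}^n]_c$, matching the form prescribed in the statement.

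Since each $m_{n+1}\geq 1$, the sequence $(k_n)_{n=0}^\infty$ is strictly increasing, and the data $(\vt_n)$, $(w_n)$, $(k_n)$, $(m_n)$ constructed above satisfies hypotheses (i)--(iii) of the fusion Lemma \ref{Cf9}. Applying that lemma produces a strictly increasing sequence $(p_n)_{n=0}^\infty\subseteq\nn$ with $p_0=0$ satisfying conclusions (C1)--(C3). Since $k_0=k$ and $t^{(0)}_i(x)=s_i(x)$, conclusion (C1) is exactly (P1) and conclusion (C2) is exactly (P2).

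To verify (P3), fix $n\in\nn$. By the inductive construction $E_{F_n}$ is $(S,k_{n+1})$-large in $\vt_{n+1}$, while applying (C3) of Lemma \ref{Cf9} at index $n+1$ tells us that $(w_i(x))_{i=n+1}^\infty$ is a reduced $k_{n+1}$-block subsequence of $\vt_{n+1}$; Fact \ref{Cf13} then transfers the $(S,k_{n+1})$-largeness of $E_{F_n}$ from $\vt_{n+1}$ onto $(w_i(x))_{i=n+1}^\infty$, establishing (P3). The main obstacle is essentially one of bookkeeping: aligning the recursively shifted alphabet indices in the span notation so that the single-step Lemma \ref{Cl16} chains consistently, and verifying that the telescoping sets $F_n$ match the form prescribed in the statement so that the composition identity for $E_{(\cdot)}$ applies at each stage.
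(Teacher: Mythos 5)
Your proposal is correct and follows essentially the same route as the paper: iterate Lemma \ref{Cl16} (using the composition identity $(E_{F_1})_{F_2}=E_{F_1F_2}$) to produce the data satisfying hypotheses (i)--(iii) of the fusion Lemma \ref{Cf9}, read off (P1) and (P2) from (C1) and (C2), and obtain (P3) from (C3) together with Fact \ref{Cf13}.
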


\begin{proof}
 By repeated use of Lemma \ref{Cl16} we obtain
   a sequence $(w_n(x))_{n=0}^ \infty$ of variable words,  a
sequence $(\vt_n)_{n=0}^ \infty$ in  $ \V$ with $\vt_0 =\vs$, and two
sequences $(k_n)_{n=0}^ \infty$ and  $(m_n)_{n=0}^ \infty$ in
$ \nn $, with $k_0=k$ and
$m_0=0$, such that setting for every $n\in\nn$,
$\vt_n=(t_i^{(n)}(x))_{i=0}^\infty$
then for every $n\geq 1$ the following  are satisfied.
\begin{enumerate}
\item[(i) ]  $m_{n}\geq 1$ and $k_{n}=k_{n-1}+m_{n}$.

\item[(ii)]
 $w_{n-1}(x) \in [(t^{(n-1)}_i(x))_{i=0}^{m_{n}-1}\ \| \
(A_{k_{n-1}+i})_{i=0}^{m_{n}-1}]_v$.

\item[(iii)] $\vt _{n}$ is a reduced $k_{n}$-block subsequence of
 $(t_{i}^{(n-1)}(x))_{i=m_{n}}^\infty$.

\item[(iv)]  The set $E_{F_{n-1}}$ is $ (S,k_{n})\text{-large in }
\vt_{n}$.
\end{enumerate}
Assertions (i) - (iii) allow us to apply  Lemma \ref{Cf9} and
obtain a strictly increasing sequence $(p_n)_{n=0}^ \infty$ with $p_0=0$
satisfying (C1)-(C3) of that lemma. Notice that (C1) and (C2) are
identical to (P1) and (P2) respectively. By (C3) we have that
$(w_{i}(x))_{i=n+1}^\infty$ is a  reduced $k_{n+1}$-block subsequence of
$\vt_{n+1}$ and by (iv) above we have that $E_{F_{n}}$ is $
(S,k_{n+1})\text{-large in } \vt_{n+1}$. Hence, by Fact \ref{Cf13}
we have that (P3) is also satisfied.
\end{proof}

\begin{lem}\label{Cl21}
 Let $\vec{w}=(w_n(x))_{n=0}^ \infty$, $(k_n)_{n=0}^ \infty$  and
 $(p_n)_{n=0}^ \infty$ be the sequences obtained in Lemma
 \ref{Cl20}. Also let $F_n=[ (w_i(x))_{i=0}^n\;\|\;
 (A_{k_i})_{i=0}^{n}]_c,$ for all $n \in \nn$.
Then there exist a strictly increasing sequence
 $(r_n)_{n=0}^ \infty$ in $\nn$ with $r_0 =0$
 and a sequence $\vt=(t_n(x))_{n=0}^ \infty$  of variable words  such that
for every  $n\geq 1$ the following are
  satisfied.
 \begin{enumerate}

 \item[(Q1)] $t_{n}(x) \in
 [(w_{r_n+i}(x))_{i=0}^{r_{n+1}-r_{n}-1}
  \; \|\; (A_{k_{r_{n}}+i})_{i=0}^{r_{n+1}-r_{n}-1}]_v$.

 \item[(Q2)] For every $u\in [(t_i(x))_{i=0}^{n-1}   \; \|\;
 (A_{k_{r_{i+1}-1}})_{i=0}^{n-1}]_c$ we have $u t_{n}(x) ^* \in
 E$.

  \item[(Q3)] $[(t_i(x))_{i=0}^n \; \| \; (A_{k_{r_{i+1}-1}})_{i=0}^n]_c \subseteq
  F_{r_{n+1}-1}$.

 \end{enumerate}
\end{lem}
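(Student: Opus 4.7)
My plan is to build the strictly increasing sequence $(r_n)_{n=0}^\infty$ and the sequence $\vt=(t_n(x))_{n=0}^\infty$ simultaneously by induction on $n$. The engine of the induction is property (Q3), which is the invariant that keeps the construction tied to the hypothesis on $\vec{w}$: at each stage (Q3) places every admissible substitution made into $(t_0(x),\ldots,t_n(x))$ inside $F_{r_{n+1}-1}$, and this is precisely what is needed to activate the $(S,k_{r_{n+1}})$-largeness of $E_{F_{r_{n+1}-1}}$ in the tail $(w_i(x))_{i=r_{n+1}}^\infty$ provided by (P3) of Lemma \ref{Cl20} applied at the index $r_{n+1}-1$.

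For the base step I take $r_0=0$, $r_1=1$, and $t_0(x)=w_0(x)$, so that (Q3) at $n=0$ reduces to $\{w_0(a):a\in A_{k_0}\}\subseteq F_0$, which is immediate from the definition of $F_0$. For the inductive step I regard $(w_i(x))_{i=r_{n+1}}^\infty$ as a reduced $k_{r_{n+1}}$-block subsequence of itself, so that $(S,k_{r_{n+1}})$-largeness produces an integer $m\geq 0$ and letters $a_j\in A_{k_{r_{n+1}}+j}$ for $0\leq j\leq m$ with
\[w_{r_{n+1}}(a_0)\,w_{r_{n+1}+1}(a_1)\cdots w_{r_{n+1}+m}(a_m)\,w_{r_{n+1}+m+1}(x)^{*}\in E_{F_{r_{n+1}-1}}.\]
I then set $r_{n+2}=r_{n+1}+m+2$ and
\[t_{n+1}(x)=w_{r_{n+1}}(a_0)\,w_{r_{n+1}+1}(a_1)\cdots w_{r_{n+1}+m}(a_m)\,w_{r_{n+1}+m+1}(x),\]
so that $t_{n+1}(x)^{*}$ coincides with the constant word on the left-hand side of the displayed membership.

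With these choices (Q1) at $n+1$ is immediate from the shape of $t_{n+1}(x)$, and (Q2) at $n+1$ follows by combining the inductive (Q3), which embeds every admissible prefix $u$ into $F_{r_{n+1}-1}$, with the displayed membership to conclude $u\,t_{n+1}(x)^{*}\in E$. The main obstacle sits in verifying (Q3) at level $n+1$: one must show that each $t_{n+1}(a)$ with $a\in A_{k_{r_{n+2}-1}}$ lies in $[(w_i(x))_{i=r_{n+1}}^{r_{n+2}-1}\ \|\ (A_{k_i})_{i=r_{n+1}}^{r_{n+2}-1}]_c$. This forces matching the letter $a_j\in A_{k_{r_{n+1}}+j}$ against the ambient alphabet $A_{k_{r_{n+1}+j}}$, a reconciliation that rests on the elementary inequality $k_{r_{n+1}}+j\leq k_{r_{n+1}+j}$, which follows from the strict monotonicity of $(k_n)$ together with the monotonicity of $(A_n)$. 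Concatenating the inductive $u\in F_{r_{n+1}-1}$ with the new $t_{n+1}(a)\in[(w_i(x))_{i=r_{n+1}}^{r_{n+2}-1}\ \|\ (A_{k_i})_{i=r_{n+1}}^{r_{n+2}-1}]_c$ then yields the required element of $F_{r_{n+2}-1}$ and closes the induction.
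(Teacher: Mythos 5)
Your proof is correct and follows essentially the same route as the paper: the same base case ($r_0=0$, $r_1=1$, $t_0(x)=w_0(x)$), the same use of (P3) of Lemma \ref{Cl20} at index $r_{n+1}-1$ together with the inductive (Q3) to extract the word $w_{r_{n+1}}(a_0)\cdots w_{r_{n+1}+m}(a_m)w_{r_{n+1}+m+1}(x)$, the same choice $r_{n+2}=r_{n+1}+m+2$, and the same alphabet-reconciliation $k_{r_{n+1}}+j\le k_{r_{n+1}+j}$ (left implicit as ``easily checked'' in the paper) to verify (Q3). The only cosmetic difference is that you apply largeness directly to $E_{F_{r_{n+1}-1}}$ where the paper passes through the auxiliary set $G_n\subseteq F_{r_{n+1}-1}$ and its superset $E_{G_n}$; the two are interchangeable.
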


 \begin{proof} We proceed by induction to the construction
 of the desired sequences.
 For $n=1$ we set $r_0=0$, $r_1=1$ and  $t_0(x)=w_0(x)$. Let
   $$G_0=[t_0(x)\; \|\; A_{k_{r_1-1}}]_c.$$ Then
   $G_0 =F_0$ and therefore by Lemma \ref{Cl20} we have that $E_{G_0}$ is
$(S,k_{1})$-large in $(w_{i}(x))_{i=1}^\infty$. Hence, there exist
$r \in\nn$ and $a_i \in A_{k_1+i}$, $0\leq i\leq r$ such that for
every $u\in G_0$ we have
$$ u w_{r_1}(a_0)...w_{r_1+r}(a_{r})w_{r_1+r+1}(x)^*
\in E.$$
 We set $r_2=r_1+r+2$ and $$t_1(x)=w_{r_1}(a_0)...w_{r_1+r}(a_{r})w_{r_1+r+1}(x)=
 w_{r_1}(a_0)...w_{r_2-2}(a_{r})w_{r_2-1}(x).$$
 Notice that $t_1(x)^*=w_{r_1}(a_0)...w_{r_2-2}(a_{r})w_{r_2-1}(x)^*$ and
  $$ [(t_i(x))_{i=0}^1\; \|\;
   (A_{k_{r_{i+1}-1}})_{i=0}^1]_c\subseteq
   [ (w_i(x))_{i=0}^{r_2-1}\;\|\; (A_{k_i})_{i=0}^{r_2-1}]_c= F_{r_2-1}.$$
By the above we have that  (Q1) - (Q3) are satisfied for $n=1$.

 Assume that the construction has been carried out up to some $n \geq 1$.
 We set $$G_n =[(t_i(x))_{i=0}^n \; \| \; (A_{k_{r_{i+1}-1}})_{i=0}^n]_c$$ and
 by our inductive hypothesis we have that $G_n \subseteq
  F_{r_{n+1}-1}$. Therefore by Lemma \ref{Cl20} we have that $E_{G_n}$
  is $(S,k_{r_{n+1}})$-large in $(w_{i}(x))_{i=r_{n+1}}^\infty$.
Hence,  there exist $r \in\nn$ and $a_i \in A_{k_{r_{n+1}}+i}$,
$0\leq i\leq r$ such that for every $u\in G_n$ we have
$$ u w_{r_{n+1}}(a_0)...w_{r_{n+1}+r}(a_{r})w_{r_{n+1}+r+1}(x)^*
\in E.$$
 We set $r_{n+2}=r_{n+1}+r+2$ and \[\begin{split}t_{n+1}(x) & =
 w_{r_{n+1}}(a_0)...w_{r_{n+1}+r}(a_{r})w_{r_{n+1}+r+1}(x)\\ &
 =w_{r_{n+1}}(a_0)...w_{r_{n+2}-2}(a_{r})w_{r_{n+2}-1}(x).\end{split}\]
 Notice that
 $t_{n+1}(x)^*=w_{r_{n+1}}(a_0)...w_{r_{n+2}-2}(a_{r})w_{r_{n+2}-1}(x)^*$
 and
  $$[(t_i(x))_{i=0}^{n+1} \; \| \; (A_{k_{r_{i+1}-1}})_{i=0}^{n+1}]_c\subseteq
   [ (w_i(x))_{i=0}^{r_{n+2}-1}\;\|\; (A_{k_i})_{i=0}^{r_{n+2}-1}]_c= F_{r_{n+2}-1}.$$
   It is
 easily checked that  (Q1) - (Q3) are satisfied and the proof of the inductive step is complete.
\end{proof}

\begin{cor}\label{corol22}
   Let $k \in \nn$, $E \subseteq W(A)$ and $\vs \in \V$ such that
   $E$ is $(S,k)$-large in $\vs$.
   Then there exists a reduced
   $k$-block subsequence $\vt$ of $\vs$
   such that
    $$[S(\vt)\;\|\; ( A_{k+n})_{n=0}^ \infty]_c \subseteq E.$$
\end{cor}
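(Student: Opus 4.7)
The plan is to feed the data of Lemma \ref{Cl20} into Lemma \ref{Cl21} and then verify that the output sequence $\vt=(t_n(x))_{n=0}^\infty$ simultaneously satisfies the two requirements of the corollary: first, that $\vt\unlhd_k\vs$, and second, that every element of $[S(\vt)\;\|\;(A_{k+n})_{n=0}^\infty]_c$ lies in $E$. Concretely, I will apply Lemma \ref{Cl20} to $(k,E,\vs)$ and obtain $\vw=(w_n(x))_{n=0}^\infty$ together with the increasing sequences $(k_n)_{n=0}^\infty$, $(p_n)_{n=0}^\infty$; then I will apply Lemma \ref{Cl21} to obtain the auxiliary sequence $(r_n)_{n=0}^\infty$ and the variable words $(t_n(x))_{n=0}^\infty$.

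To verify $\vt\unlhd_k\vs$, I will set $q_n=p_{r_n}$ (noting that $q_0=0$ and that $(q_n)$ is strictly increasing since $(p_n)$ and $(r_n)$ are) and check that
\[t_n(x)\in [(s_i(x))_{i=q_n}^{q_{n+1}-1}\;\|\;(A_{k+i})_{i=q_n}^{q_{n+1}-1}]_v.\]
The inclusion of words is immediate by chaining (Q1) and (P2). The subtle point is that the letters one substitutes when forming $t_n(x)$ lie in $A_{k_{r_n}+i}$ (from (Q1)), while the span with respect to $\vs$ requires, at absolute position $j\ge p_{r_n+i}$, letters from $A_{k+j}$. Using (P1) and the strict monotonicity of $(k_n)$, I will show $k+j\ge k+p_{r_n+i}\ge k_{r_n+i}\ge k_{r_n}+i$, so the increasing-alphabets hypothesis forces $A_{k_{r_n}+i}\subseteq A_{k+j}$, which is exactly what is needed.

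For the span inclusion, I will unwind the definition of $S$: an arbitrary element of $[S(\vt)\;\|\;(A_{k+n})_{n=0}^\infty]_c$ collapses, because $t_i(x)^{**}(a)\,t_{i+1}(x)^{*}$ reassembles with the preceding piece into $t_i(a)\,t_{i+1}(x)^{*}$, to a word of the form
\[t_0(a_0)\,t_1(a_1)\cdots t_m(a_m)\,t_{m+1}(x)^{*},\qquad a_i\in A_{k+i}.\]
By (Q2), such a word lies in $E$ provided $a_i\in A_{k_{r_{i+1}-1}}$ for each $i$. Because $k_0=k$, $(k_n)$ is strictly increasing, and $(r_n)$ is strictly increasing with $r_0=0$, I have $k_{r_{i+1}-1}\ge k+(r_{i+1}-1)\ge k+i$, and again by monotonicity of the alphabets $A_{k+i}\subseteq A_{k_{r_{i+1}-1}}$. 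This lands $a_i$ in the required alphabet and completes the proof.

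The only place that requires genuine care is the alphabet bookkeeping in both parts; everything else is direct bookkeeping or invocation of previously established lemmas and facts (chiefly (P1), (Q1), (Q2), and the standing hypothesis $A_n\subseteq A_{n+1}$). The monotonicity of the alphabets is used in an essential way and explains why both verifications go through despite the apparent mismatch between the indices $k+i$, $k_{r_n}+i$, and $k_{r_{i+1}-1}$.
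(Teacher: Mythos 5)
Your proposal is correct and follows essentially the same route as the paper: apply Lemma \ref{Cl20} followed by Lemma \ref{Cl21}, verify $\vt\unlhd_k\vs$ by combining (Q1) with (P1), (P2) and the monotonicity of the alphabets, and then collapse elements of $[S(\vt)\;\|\;(A_{k+n})_{n=0}^\infty]_c$ to words $t_0(a_0)\cdots t_m(a_m)t_{m+1}(x)^*$ handled by (Q2) via the inequality $k_{r_{i+1}-1}\ge k+i$. The alphabet bookkeeping you single out is exactly the point the paper's proof also checks, and your chains of inequalities are the same ones used there.
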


\begin{proof}
Let $(r_n)_{n=0}^ \infty$ and $\vt= (t_n(x))_{n=0}^ \infty$ be the
sequences obtained in
Lemma \ref{Cl21}. Fix $j\in\nn$. By (P2) of Lemma \ref{Cl20} we
have that
 $$w_{r_n+j}(x) \in
[(s_i(x))_{i=p_{r_n+j}}^{p_{{r_n+j}+1}-1} \; \| \;
(A_{k+i})_{i=p_{r_n+j}}^{p_{{r_n+j}+1}-1}]_v.$$ Moreover by (P1)
of Lemma \ref{Cl20} we have that $$k+p_{r_n+j} \geq k_{r_n+j}.$$
Hence, for all $j \in \nn$, $$[w_{r_n+j}(x) \; \| \;
A_{k_{r_n}+j}]_c \subseteq
[(s_i(x))_{i=p_{r_n+j}}^{p_{{r_n+j}+1}-1} \; \| \;
(A_{k+i})_{i=p_{r_n+j}}^{p_{{r_n+j}+1}-1}]_c.$$
Therefore we obtain that
$$[(w_{r_n+j}(x))_{j=0}^{r_{n+1}-r_n-1}\; \|\;
(A_{k_{r_n}+j})_{j=0}^{r_{n+1}-r_n-1}]_c
 \subseteq
  [(s_i(x))_{i=p_{r_n}}^{p_{r_{n+1}}-1} \;
   \| \; (A_{k+i})_{i=p_{r_n}}^{p_{r_{n+1}}-1}]_c$$
  and so
  $$[(w_{r_n+j}(x))_{j=0}^{r_{n+1}-r_n-1}\; \|\;
(A_{k_{r_n}+j})_{j=0}^{r_{n+1}-r_n-1}]_v
 \subseteq
  [(s_i(x))_{i=p_{r_n}}^{p_{r_{n+1}}-1} \;
   \| \; (A_{k+i})_{i=p_{r_n}}^{p_{r_{n+1}}-1}]_v$$
   Hence, by (Q1) of Lemma \ref{Cl21} we have that  $$t_n(x) \in
   [(s_i(x))_{i=p_{r_n}}^{p_{r_{n+1}}-1} \;
   \| \; (A_{k+i})_{i=p_{r_n}}^{p_{r_{n+1}}-1}]_v,$$
 i.e. $\vt$ is a reduced $k$-block subsequence of $\vs$.

 Recall that
$S(\vt) =(t_0(x) t_1(x)^*,t_1(x)^{**}t_2(x)^*,...).$ We set
$u_0(x) = t_0(x)t_1(x)^*$ and for all $n \geq 1 $ we set
$u_n(x)=t_n(x)^{**}t_{n+1}(x)^*$. Then $$[S(\vt) \; \| \;
(A_{k+n})_{n=0}^ \infty]_c = [(u_n(x))_{n=0}^ \infty \; \| \;
(A_{k+n})_{n=0}^ \infty]_c.$$

Let $n \in \nn$ and for every $0 \leq i \leq n$ let $a_i \in
A_{k+i}$. Notice that  $$u_0(a_0)
u_1(a_1)...u_n(a_n)=t_0(a_0)...t_n(a_n)t_{n+1}(x)^*.$$ We set
$u=t_0(a_0) t_1(a_1)...t_n(a_n)$.
 Since the
sequences $(k_n)_{n=0}^ \infty$ and $(r_n)_{n=0}^ \infty$  are
increasing we obtain that
$$k_{r_{i+1}-1} \geq k_{i} \geq k+i.$$ Therefore,
$$u \in [(t_i(x))_{i=0}^n \; \| \;
(A_{k+i})_{i=0}^n]_c \subseteq [(t_i(x))_{i=0}^{n}   \; \|\;
 (A_{k_{r_{i+1}-1}})_{i=0}^{n}]_c.$$
Hence by (Q2) of Lemma \ref{Cl21}  we have that
$$ut_{n+1}(x)^*=t_0(a_0)...t_n(a_n)t_{n+1}(x)^* \in E.$$
Therefore  $[S(\vt)\;\|\; ( A_{k+n})_{n=0}^ \infty]_c \subseteq E$ and the
proof is complete.
\end{proof}

\begin{thm}\label{thm23}
  Let $k \in \nn$ and  $\vec{v}=(v_n(x))_{n=0}^ \infty$ be
  a sequence of variable words such that
  for every $n\geq 1$ $v_n(x)$ is a left variable word.
  Let $r\geq 2$ and $[\vec{v}\;\|\; (A_{k+n})_{n=0}^ \infty]_c = \cup_{i=1}^r
  E_i$. Then
   there exist $1 \leq i\leq r$ and
   a reduced $k$-block subsequence $\vec{w}=(w_n(x))_{n=0}^ \infty$ of $\vec{v}$
   such that for every $n\geq 1$, $w_n(x)$ is
   a left variable word and
   $[\vec{w}\;\|\; (A_{k+n})_{n=0}^ \infty]_c \subseteq E_i$.
\end{thm}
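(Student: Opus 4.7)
The plan is to package together Lemma \ref{Cl14} and Corollary \ref{corol22}, exploiting the fact that a sequence of variable words whose terms of index $n\geq 1$ are all left variable is a fixed point of the shift map $S$.

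The first step is the observation that the hypothesis on $\vec{v}$ forces $S(\vec{v})=\vec{v}$: for $n\geq 1$ left variability of $v_n(x)$ gives $v_n(x)^*=\emptyset$ and $v_n(x)^{**}=v_n(x)$, so by the definition of $S$ the initial term of $S(\vec v)$ is $v_0(x)v_1(x)^*=v_0(x)$ and every subsequent term is $v_n(x)^{**}v_{n+1}(x)^*=v_n(x)$. Setting $E:=[\vec{v}\;\|\;(A_{k+n})_{n=0}^\infty]_c$, I would then verify that $E$ is $(S,k)$-large in $\vec{v}$: for any $\vec{w}\unlhd_k\vec{v}$, Fact \ref{Cf11} gives $S(\vec{w})\unlhd_k S(\vec{v})=\vec{v}$, and then Fact \ref{Cf7} yields $[S(\vec{w})\;\|\;(A_{k+n})_{n=0}^\infty]_c\subseteq [\vec{v}\;\|\;(A_{k+n})_{n=0}^\infty]_c=E$, and this latter set is nonempty.

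Feeding the partition $E=\bigcup_{i=1}^r E_i$ into Lemma \ref{Cl14} produces an index $1\leq i\leq r$ and $\vec{t}\unlhd_k\vec{v}$ with $E_i$ still $(S,k)$-large in $\vec{t}$; Corollary \ref{corol22} then furnishes $\vec{u}\unlhd_k\vec{t}$ (hence $\vec{u}\unlhd_k\vec{v}$ by Fact \ref{Cf8}) with $[S(\vec{u})\;\|\;(A_{k+n})_{n=0}^\infty]_c\subseteq E_i$. I would finally set $\vec{w}:=S(\vec{u})$: Fact \ref{Cf11} gives $\vec{w}=S(\vec{u})\unlhd_k S(\vec{v})=\vec{v}$; for each $n\geq 1$ the term $w_n(x)=u_n(x)^{**}u_{n+1}(x)^*$ begins with the left variable word $u_n(x)^{**}$ and is therefore itself a left variable word; and the desired inclusion $[\vec{w}\;\|\;(A_{k+n})_{n=0}^\infty]_c\subseteq E_i$ is immediate from the choice of $\vec{u}$.

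There is no genuine obstacle beyond bookkeeping; the one subtle point is the double role played by the identity $S(\vec{v})=\vec{v}$. It yields the $(S,k)$-largeness of $E$ in $\vec{v}$ essentially for free, and, applied in reverse to the sequence $\vec{u}$ produced by Corollary \ref{corol22}, it guarantees the left variable property of the terms of the final $\vec{w}=S(\vec{u})$.
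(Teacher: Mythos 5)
Your proposal is correct and follows essentially the same route as the paper's own proof: use $S(\vec v)=\vec v$ together with Facts \ref{Cf11} and \ref{Cf7} to see that the whole span is $(S,k)$-large in $\vec v$, apply Lemma \ref{Cl14} and Corollary \ref{corol22}, and take $\vec w=S(\vec u)$, whose terms of index $n\geq 1$ are left variable by construction. The only cosmetic remark is that the nonemptiness needed for $(S,k)$-largeness is that of $E\cap[S(\vec w)\,\|\,(A_{k+n})_{n=0}^\infty]_c$, which coincides with $[S(\vec w)\,\|\,(A_{k+n})_{n=0}^\infty]_c$ itself by the inclusion you establish, so your argument is complete.
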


\begin{proof} Since  for every $n\geq
1$, $v_n(x)$ is a left variable word he have that
$S(\vec{v})=\vec{v}$ and so $[S(\vec{v})\;\|\; (A_{k+n})_{n=0}^ \infty]_c =
[\vec{v}\;\|\; (A_{k+n})_{n=0}^ \infty]_c =\cup_{i=1}^r
  E_i.$
Let $\vt \in \V$ such that
      $\vt \unlhd_k \vec{v}$. By Fact \ref{Cf11}, we have that
      $S(\vt) \unlhd_k S(\vec{v})$
and by Fact \ref{Cf7} we obtain that $[S(\vt) \  \| \
(A_{k+n})_{n=0}^ \infty]_c
    \subseteq [S(\vec{v}) \  \| \ (A_{k+n})_{n=0}^ \infty]_c$ and
    therefore $[S(\vt) \  \| \ (A_{k+n})_{n=0}^ \infty]_c
    \subseteq \cup_{i=1}^r
  E_i$. Thus,  trivially, we get that    $\cup_{i=1}^r E_i$ is $(S,k)$-large in
  $\vec{v}$.
Hence, by Lemma \ref{Cl14}, there
  exist $1 \leq i\leq r$ and $\vs \unlhd _k \vec{v} $
  such that $E_i$ is $(S,k)$-large in $\vs$ and so
  by Corollary \ref{corol22},
  there exists a
   $\vt \unlhd _k \vs$
   such that
    \[\label{oi}[S(\vt)\;\|\; ( A_{k+n})_{n}]_c \subseteq
    E_i.\]
    We set $\vw= S(\vt)$. Since $\vt \unlhd _k \vs\unlhd _k \vec{v}$
     we have  $\vw=S(\vt) \unlhd _k S(\vec{v})=\vec{v}$. Hence,
     $\vw\unlhd _k \vec{v}$ and
 $[\vw\;\|\; ( A_{k+n})_{n}]_c \subseteq
    E_i.$ The
     proof is complete.
\end{proof}

\begin{proof}[Proof of Theorem \ref{thm2}] Let $r \geq 2 $ and let $W(A) =
\cup _{i=1}^r E_i$. Let $\vec{v}=(x,x,...)$. Then
 $[\vec{v}\;\|\; (A_{n})_{n=0}^ \infty]_c = W(A)=\cup_{i=1}^r E_i$. Applying
 Theorem \ref{thm23} for $k=0$, we obtain $1 \leq i\leq r$ and
   $\vw=(w_n(x))_{n=0}^ \infty\in \V$
    such that for every $n\geq 1$, $w_n(x)$ is
   a left variable word and
   $[\vec{w}\;\|\; (A_{n})_{n=0}^ \infty]_c \subseteq E_i$.
\end{proof}

\section{Proof of theorem \ref{thm3}}

\subsection{Preliminaries.} \label{Initializing the
proof2}
As in Section \ref{Initializing the proof} we
fix for the following an increasing sequence
$$A_0\subseteq A_1 \subseteq ... \subseteq A_n \subseteq ...$$
 of finite alphabets and we set $$A=\cup_{n\in \nn} A_n.$$ Again, by
 $V(A)$ we denote the set of all variable words (over
 $A$).
 Also by $V^ {<\infty}(A) $ (resp. $V^ \infty(A)
$) we denote the set of all finite (resp. infinite) sequences of
variable words and let $V^{\leq\infty}(A)=V^ {<\infty}(A) \cup
V^ \infty(A)$.

\subsubsection{Extracted constant and variable span of a sequence of variable words.}
 Let  $m\in \nn$,
$(s_n(x))_{n=0}^m\in V^{<\infty}(A)$ and $(k_n)_{n=0}^m$ be a
strictly increasing finite sequence of non negative integers.
 The \textit{extracted constant span of}
$(s_n(x))_{n=0}^m$ \textit{with respect to} $(A_{k_n})_{n=0}^m$
denoted by $<(s_n(x))_{n=0}^m \  \| \ (A_{k_n})_{n=0}^m>_c$
is defined to be the set
$$\bigcup_{n=0}^m\{s_{l_0}(a_0)... s_{l_n}(a_n):
0 \leq l_0<...<l_n\leq m,\;
  a_i\in A_{k_{l_i}}  \text{for all}
\  0\leq i \leq n \}.$$
We also define the \textit{extracted variable span of}
$(s_n(x))_{n=0}^m$ \textit{with respect to} $(A_{k_n})_{n=0}^m$
 to
be the set
\[<(s_n(x))_{n=0}^m \  \| \ (A_{k_n})_{n=0}^m>_v =
V(A) \cap <(s_n(x))_{n=0}^m \  \| \ (A_{k_n} \cup
\{x\})_{n=0}^m>_c,\] that is the extracted variable span of
$(s_n(x))_{n=0}^m$ with respect to $(A_{k_n})_{n=0}^m$ consists of
all variable words of the form $s_{l_0}(b_0)s_{l_1}(b_1)...
s_{l_n}(b_n)$
 such that $0 \leq l_0<l_1<...<l_n\leq m$, $b_i \in A_{k_{l_i}} \cup \{x\}$,
for all $0\leq i \leq n$ and for
at least one $i$ we have $b_i=x$.

The above notation extends to infinite sequences of
variable words as follows. Let $(s_n(x))_{n=0}^ \infty  \in \V$
and $(k_n)_{n=0}^ \infty$
be a strictly increasing sequence of non negative integers. Then
the extracted constant span of $(s_n(x))_{n=0}^ \infty$ with
respect to $(A_{k_n})_{n=0}^ \infty$
denoted by $<(s_n(x))_{n=0}^ \infty \  \| \ (A_{k_n})_{n=0}^ \infty>_c$
is defined to be the set
$$\{s_{l_0}(a_0)s_{l_1}(a_1)... s_{l_n}(a_n):
n\in \nn,\ 0\leq l_0<l_1<...<l_n,\; a_i\in A_{k_{l_i}} \text{for all }0\leq
i\leq n \}$$
 and the extracted variable span of $(s_n(x))_{n=0}^ \infty$ with
respect to $(A_{k_n})_{n=0}^ \infty$ is the set
\[<(s_n(x))_{n=0}^ \infty \  \| \ (A_{k_n})_{n=0}^ \infty>_v =
V(A) \cap <(s_n(x))_{n=0}^ \infty \ \|
\ (A_{k_n} \cup \{x\})_{n=0}^ \infty>_c.\]
In the following  we will also write $<\vs \  \| \
(A_{k_n})_{n=0}^ \infty>_c$
(resp. $<\vs \  \| \ (A_{k_n})_{n=0}^ \infty>_v$) to
denote the the extracted
constant (resp. variable) span of $\vs=(s_n(x))_{n=0}^ \infty$
with respect to
$(A_{k_n})_{n=0}^ \infty$.

\subsubsection{Extracted $k$-block subsequences  of a sequence of variable words.}
Next we specify a notion of a  ``block subsequence" of a sequence
$\vs\in\V$ which is related to the extracted span of $\vs$.
\begin{defn}\label{ublock}
Let $k \in \nn$.

\begin{enumerate}
  \item[(i)]Let $l\in \nn$,
   $\vt=(t_n(x))_{n=0}^l\in V^{<\infty}(A)$ and
$\vs=(s_n(x))_{n=0}^ \infty\in \V$. We say that
$\vt$ is a (finite) extracted
$k$-block subsequence of $\vs$ if there exist
$0=m_0<...<m_{l+1}$ such
that
$$t_i(x)\in <(s_n(x))_{n=m_{i}}^{m_{i+1}-1} \ \| \
(A_{k+n})_{n=m_{i}}^{m_{i+1}-1}>_v,$$ for all
$0 \leq i \leq l$.
 \item[(ii)] Let $\vt=(t_n(x))_{n=0}^\infty ,
 \vs=(s_n(x))_{n=0}^\infty \in \V$. We say
that $\vt$ is an (infinite) extracted $k$-block
subsequence of $\vs$ if for
every $l\in \nn$, the finite sequence $(t_n(x))_{n=0}^l$
is a (finite) extracted
$k$-block subsequence of $\vs$.
\end{enumerate}
\end{defn}

In the following we will write  $\vt \leq_k \vs$, whenever $\vt
\in V^{\leq \infty}(A)$, $\vs\in \V$ and $\vt$ is an extracted
$k$-block subsequence of $\vs$.

Notice that if $\vt$ is a finite (resp. infinite) extracted
$k$-block subsequence of $\vs$ then by Definition \ref{ublock}
 there exists a
finite (resp. infinite) sequence of non negative integers
$(m_i)$ such that $t_i(x)\in <(s_n)_{n=m_{i}}^{m_{i+1}-1} \ \| \
(A_{k+n})_{n=m_{i}}^{m_{i+1}-1}>_v$. In contrast to the case of
reduced $k$-block subsequences (see, Definition \ref{block}) this sequence of
non negative integers is not necessarily unique. This is due to
the way extracted spans and block subsequences are defined.

Moreover, taking into account that  the sequence of alphabets
$(A_n)_{n=0}^ \infty$ is increasing, the next facts follow easily from the
 above definitions.
\begin{fact}\label{Cf25}
 Let $k \in \nn$ and $\vs,\vt  \in \V$. If  $\vt \leq _k
\vs$ then   $$<\vt \; \| \; (A_{k+n})_{n=0}^ \infty>_c
\subseteq <\vs\;\|\;
  (A_{k+n})_{n=0}^ \infty>_c$$ and $$<\vt \; \| \;
  (A_{k+n})_{n=0}^ \infty>_v \subseteq <\vs\;\|\;
(A_{k+n})_{n=0}^ \infty>_v.$$
\end{fact}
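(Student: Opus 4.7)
The plan is to unwind the definitions directly. Let $\vs=(s_n(x))_{n=0}^\infty$ and $\vt=(t_n(x))_{n=0}^\infty$, and fix a witnessing sequence $0=m_0<m_1<\dots$ so that for every $i\in\nn$
\[
t_i(x)\in <(s_n(x))_{n=m_i}^{m_{i+1}-1}\;\|\;(A_{k+n})_{n=m_i}^{m_{i+1}-1}>_v,
\]
i.e., $t_i(x)=s_{j_0^i}(b_0^i)\dots s_{j_{d_i}^i}(b_{d_i}^i)$ for some $m_i\le j_0^i<\dots<j_{d_i}^i\le m_{i+1}-1$ and $b_r^i\in A_{k+j_r^i}\cup\{x\}$, with at least one $b_r^i=x$.

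For the constant span inclusion, I would start with an arbitrary element of $<\vt\;\|\;(A_{k+n})_{n=0}^\infty>_c$, namely $t_{l_0}(a_0)\dots t_{l_p}(a_p)$ with $l_0<\dots<l_p$ and $a_i\in A_{k+l_i}$. Substituting into the expansion of $t_{l_i}(x)$, each occurrence of $x$ becomes $a_i$; the remaining letters $b_r^i\in A_{k+j_r^i}$ are unchanged. To conclude, I need the replacement letters $a_i$ to lie in the right alphabet, and this is exactly where monotonicity of $(A_n)$ enters: since $m_0=0$ and the $m_i$ are strictly increasing, $j_r^{l_i}\ge m_{l_i}\ge l_i$, so $A_{k+l_i}\subseteq A_{k+j_r^{l_i}}$. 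Finally the full concatenated sequence of indices $j_0^{l_0},\dots,j_{d_{l_p}}^{l_p}$ is strictly increasing across blocks because $j_{d_{l_i}}^{l_i}\le m_{l_i+1}-1<m_{l_{i+1}}\le j_0^{l_{i+1}}$. Hence the resulting word lies in $<\vs\;\|\;(A_{k+n})_{n=0}^\infty>_c$.

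The variable span inclusion follows the same substitution argument: take $t_{l_0}(b_0)\dots t_{l_p}(b_p)$ with $b_i\in A_{k+l_i}\cup\{x\}$ and at least one $b_i=x$. For indices $i$ with $b_i\in A_{k+l_i}$ the block expands to a constant word as above, while for indices $i$ with $b_i=x$ the block $t_{l_i}(x)$ already contains at least one occurrence of $x$ coming from its own expansion. Consequently the full expansion lies in $V(A)\cap <\vs\;\|\;(A_{k+n}\cup\{x\})_{n=0}^\infty>_c$, which is precisely $<\vs\;\|\;(A_{k+n})_{n=0}^\infty>_v$.

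There is no real obstacle here; the only subtlety is checking that the substituted letters remain in alphabets of sufficiently large index, which is handled by the monotonicity of $(A_n)_{n=0}^\infty$ together with the inequality $m_{l_i}\ge l_i$ forced by strict increase from $m_0=0$. Everything else is bookkeeping of indices.
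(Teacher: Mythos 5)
Your proof is correct and is exactly the routine verification the paper has in mind: the paper states Fact \ref{Cf25} without proof, remarking only that it ``follows easily from the definitions'' using that $(A_n)_{n=0}^\infty$ is increasing, and your argument supplies precisely that bookkeeping, including the one genuinely needed observation that $m_{l_i}\ge l_i$ (so substituted letters land in alphabets of no smaller index) and that the concatenated $s$-indices remain strictly increasing across blocks. No issues.
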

\begin{fact}\label{Cf26}
 Let $k_0,k_1 \in \nn$, $\vs,\vt, \vw \in \V$. If
 $k_0\leq k_1$ and  $\vw \leq _{k_0} \vt\leq_{k_1} \vs$ then
 $\vw \leq_{k_1} \vs.$
\end{fact}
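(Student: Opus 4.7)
Unfolding Definition \ref{ublock}, the hypothesis $\vw \leq_{k_0} \vt$ supplies a strictly increasing sequence $0 = m_0 < m_1 < \cdots$ with
\[
w_i(x) \in\; <(t_n(x))_{n=m_i}^{m_{i+1}-1} \ \| \ (A_{k_0+n})_{n=m_i}^{m_{i+1}-1}>_v
\]
for every $i$, and $\vt \leq_{k_1} \vs$ supplies $0 = l_0 < l_1 < \cdots$ with
\[
t_n(x) \in\; <(s_j(x))_{j=l_n}^{l_{n+1}-1} \ \| \ (A_{k_1+j})_{j=l_n}^{l_{n+1}-1}>_v
\]
for every $n$. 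I would take $p_i := l_{m_i}$ as the witness for $\vw \leq_{k_1} \vs$; this sequence is strictly increasing with $p_0 = 0$.

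The substantive check is that, for each $i$, we have $w_i(x) \in\; <(s_j(x))_{j=p_i}^{p_{i+1}-1} \ \| \ (A_{k_1+j})_{j=p_i}^{p_{i+1}-1}>_v$. Expand $w_i(x) = t_{n_0}(b_0) \cdots t_{n_r}(b_r)$ with $m_i \leq n_0 < \cdots < n_r \leq m_{i+1}-1$, $b_u \in A_{k_0+n_u} \cup \{x\}$, and at least one $b_u = x$; then further expand each $t_{n_u}(x)$ as $s_{j_{u,0}}(c_{u,0}) \cdots s_{j_{u,s_u}}(c_{u,s_u})$ with $l_{n_u} \leq j_{u,0} < \cdots < j_{u,s_u} \leq l_{n_u+1}-1$, letters $c_{u,v} \in A_{k_1+j_{u,v}} \cup \{x\}$, and at least one $c_{u,v} = x$. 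Performing the substitutions and concatenating exhibits $w_i(x)$ as a product $\prod_{u,v} s_{j_{u,v}}(C_{u,v})$ whose indices $j_{u,v}$ are strictly increasing (since $n_u < n_{u+1}$ forces $j_{u,s_u} \leq l_{n_u+1}-1 < l_{n_{u+1}} \leq j_{u+1,0}$) and lie in the interval $[l_{m_i}, l_{m_{i+1}}-1] = [p_i, p_{i+1}-1]$.

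The only real obstacle is verifying that each $C_{u,v}$ is a legitimate letter of $A_{k_1+j_{u,v}} \cup \{x\}$. The only nontrivial case is when a $c_{u,v} = x$ gets overwritten by some $b_u \in A_{k_0+n_u}$: one then needs $b_u \in A_{k_1+j_{u,v}}$. This is where both hypotheses combine. From $k_0 \leq k_1$ together with $n_u \leq l_{n_u} \leq j_{u,v}$ (the former because $(l_n)$ is strictly increasing from $l_0 = 0$) I get $k_0 + n_u \leq k_1 + j_{u,v}$, hence $A_{k_0+n_u} \subseteq A_{k_1+j_{u,v}}$ by monotonicity of $(A_m)$. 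Finally, to confirm the product is a \emph{variable} word in the extracted span, pick any $u^*$ with $b_{u^*} = x$: the $x$'s originally present in $t_{n_{u^*}}(x)$ are not overwritten, so at least one $C_{u^*,v^*}$ equals $x$, placing $w_i(x)$ in the extracted variable span as required.
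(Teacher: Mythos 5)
Your proof is correct, and it is exactly the direct unwinding of Definition \ref{ublock} that the paper has in mind when it says Facts \ref{Cf25} and \ref{Cf26} ``follow easily from the above definitions'' (no proof is given in the paper); the two key points --- that $l_n\geq n$ forces $k_0+n_u\leq k_1+j_{u,v}$ so the increasing alphabets absorb the substituted letters, and that an $x$ surviving from an un-overwritten block keeps the word variable --- are both handled properly. The only cosmetic caveat is that an \emph{infinite} extracted block subsequence is defined via its finite initial segments, whose witnessing sequences $(m_i)$ need not a priori cohere into one infinite sequence; this is harmless here since your argument applies verbatim to each finite initial segment of $\vw$ separately, which is all the conclusion $\vw\leq_{k_1}\vs$ requires.
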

 The following lemma corresponds to  Lemma \ref{Cf9}. The proof is
 similar.

\begin{lem}\label{Cl27}
 Let $(\vt_n)_{n=0}^ \infty$ be a sequence  in  $ \V$ and
 $(w_n(x))_{n=0}^ \infty$ be a sequence of variable
 words. Also let  $(k_n)_{n=0}^\infty$ and $(m_n)_{n=0}^\infty$
  with $m_0=0$ be two sequences  in $
 \nn$. Let $\vt_n=(t_i^{(n)}(x))_{i=0}^\infty$ for all
 $n\in\nn$ and assume that for every $n \geq 1 $ the
 following are satisfied.
\begin{enumerate}
  \item[(i)] $m_{n} \geq 1$ and $k_{n}=k_{n-1}+m_{n}$.

 \item[(ii)]$w_{n-1}(x) \in <( t_i^{(n-1)}(x))_{i=0}^{m_{n}-1}\; \|\;
  (A_{k_{n-1}+i})_{i=0}^{m_{n}-1} >_v$.

 \item[(iii)]$\vt _{n}$ is an extracted $k_{n}$-block subsequence of
 $(t_{i}^{(n-1)}(x))_{i=m_{n}}^\infty$.
\end{enumerate}
Then there exists a strictly increasing sequence
$(p_n)_{n=0}^ \infty$ in $
\nn$ with $p_0=0$ such that for every $n \in \nn$ the
following are
satisfied.
\begin{enumerate}

\item[(R1)] $k_0+p_n \geq k_n$.

\item[(R2)] $w_n(x) \in <(t_i^{(0)}(x))_{i=p_n}^{p_{n+1}-1} \; \| \;
(A_{k_0+i})_{i=p_n}^{p_{n+1}-1}>_v$.

\item[(R3)] $(w_i(x))_{i=n}^\infty$ is a extracted
$k_n$-block subsequence of
$ \vt_n$.

\end{enumerate}
\end{lem}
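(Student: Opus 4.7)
The plan is to follow the proof of Lemma \ref{Cf9} essentially line by line, replacing every occurrence of the reduced block subsequence relation $\unlhd_k$ and the reduced spans $[\cdot\;\|\;\cdot]_{c/v}$ by the extracted block subsequence relation $\leq_k$ and the extracted spans $<\cdot\;\|\;\cdot>_{c/v}$. Once an extracted analogue of Sublemma \ref{extralemma} is established, the diagonal fusion argument transfers verbatim.

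First I would prove the extracted version of Sublemma \ref{extralemma}: given $\vt,\vt^{\;\prime} \in \V$, $k,k^\prime, p_1, p_1^\prime \in \nn$ with $k+p_1 \leq k^\prime + p_1^\prime$ and $(t_i(x))_{i=p_1}^\infty \leq_{k^\prime + p_1^\prime} (t_i^\prime(x))_{i=p_1^\prime}^\infty$, and given $p_2 > p_1$, produce $p_2^\prime > p_1^\prime$ satisfying (a)--(d) of Sublemma \ref{extralemma} with all reduced notions replaced by their extracted counterparts. To construct $p_2^\prime$, fix a witnessing sequence $0 = m_0 < m_1 < \cdots$ for the extracted block subsequence relation on the tails (such a sequence exists by Definition \ref{ublock}, though it is not unique, as pointed out right after that definition), and set $p_2^\prime = p_1^\prime + m_{p_2-p_1}$. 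Since the $m_i$ are strictly increasing non-negative integers, $m_{p_2-p_1} \geq p_2-p_1$, which yields (a); then (b) follows from (a) combined with $k+p_1 \leq k^\prime + p_1^\prime$; (c) follows by reading off the shifted witnessing sequence $(m_i - m_{p_2-p_1})_{i \geq p_2-p_1}$ on the tails starting at $p_2$ and $p_2^\prime$; and (d) is obtained from the monotonicity of $(A_n)_{n=0}^\infty$, exactly as in the reduced case, by substituting into each $t_{l_i}(x)$ its corresponding extracted decomposition in terms of $\vt^{\;\prime}$.

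With this sublemma in hand, I would mimic the inductive construction in the proof of Lemma \ref{Cf9}: by induction on $n \geq 1$, build a triangular array $(p_n^l)_{l=0}^n$ with $p_n^n = 0$ and $p_n^{n-1} = m_n$, satisfying the extracted analogues of conditions (a)--(d) of that proof. The step from row $n-1$ to row $n$ consists of $n-1$ successive applications of the extracted sublemma, iterating $l = n-2, n-3, \ldots, 0$, each time comparing $\vt_{l+1}$ against $\vt_l$ using the inductive data. Finally set $p_0 = 0$ and $p_n = p_n^0$ for $n \geq 1$; strict monotonicity of $(p_n)$ follows from (a), condition (R1) from (b), and the extracted analogue of the key identity in that proof (which places each $w_n(x)$ in the extracted variable span of a finite segment of every $\vt_l$ with $0 \leq l \leq n$) yields (R2) at $l = 0$ and (R3) by specialising $l$ appropriately.

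The only conceptual subtlety is the non-uniqueness of the witnessing block decomposition in the extracted setting, noted right after Definition \ref{ublock}. However, the entire argument uses only the \emph{existence} of some such decomposition, so one may freely fix one arbitrarily at each invocation of the extracted sublemma and proceed. No step of the reduced proof relied on uniqueness of decompositions, so nothing else requires modification.
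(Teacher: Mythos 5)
Your proposal is correct and is exactly the argument the paper intends: the paper gives no separate proof of Lemma \ref{Cl27}, stating only that it is ``similar'' to that of Lemma \ref{Cf9}, and your transfer of Sublemma \ref{extralemma} and the triangular fusion construction to the extracted setting is the right way to carry that out. Your handling of the one genuine subtlety --- the non-uniqueness of the witnessing decomposition, for which only existence is needed --- is also correct.
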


\subsubsection{The notion of $k$-large families}
\begin{defn}\label{udefn large}
  Let $k \in \nn$, $E \subseteq W(A)$  and $\vs\in\V$.
  Then $E$ will be called
  $k$-\textit{large} in $\vs$ if $$E \cap <\vw  \  \| \
  (A_{k+n})_{n=0}^ \infty>_c \neq \emptyset,$$
  for every infinite extracted $k$-block subsequence
  $\vw$ of $\vs$.\end{defn}

  By Fact \ref{Cf26} we easily obtain the following.
\begin{fact}\label{ublock fact}
   Let $k \in \nn$, $E \subseteq W(A)$ and
  $\vec{s} \in \V$
   such that $E$ is $k$-large in $\vs$. Then
    for every infinite extracted $k$-block subsequence
$\vt$ of  $\vs$ we have that $E$ is $k$-large in $\vt$.
\end{fact}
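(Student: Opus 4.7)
The plan is to reduce the claim directly to the transitivity of the extracted $k$-block subsequence relation, namely Fact \ref{Cf26}. Concretely, I would fix an infinite extracted $k$-block subsequence $\vt$ of $\vs$ and then verify the defining condition of $k$-largeness (Definition \ref{udefn large}) for $\vt$ from scratch, using the $k$-largeness of $E$ in $\vs$ as a black box.

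First I would let $\vw$ be an arbitrary infinite extracted $k$-block subsequence of $\vt$, so that $\vw \leq_k \vt$. Since by hypothesis $\vt \leq_k \vs$ and both indices are the same $k$ (so trivially $k \leq k$), Fact \ref{Cf26} applies and yields $\vw \leq_k \vs$. In other words, every infinite extracted $k$-block subsequence of $\vt$ is also an infinite extracted $k$-block subsequence of $\vs$.

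Next I would invoke the hypothesis that $E$ is $k$-large in $\vs$. Applied to the sequence $\vw$, which we just established satisfies $\vw \leq_k \vs$, this gives
\[ E \cap <\vw \ \| \ (A_{k+n})_{n=0}^\infty>_c \neq \emptyset. \]
Since $\vw$ was an arbitrary infinite extracted $k$-block subsequence of $\vt$, this is exactly the statement that $E$ is $k$-large in $\vt$, completing the argument.

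There is essentially no obstacle here: the proof is a one-line application of Fact \ref{Cf26}, entirely parallel to the proof of Fact \ref{Cf13} in the reduced setting (which used Fact \ref{Cf8} in the same role). The only thing to be careful about is to quantify correctly over all $\vw \leq_k \vt$ rather than, say, trying to use a fixed witness coming from $\vs$.
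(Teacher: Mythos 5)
Your argument is correct and is precisely the one the paper intends: Fact \ref{ublock fact} is stated there with the remark that it follows from Fact \ref{Cf26}, i.e.\ from transitivity of $\leq_k$ applied with $k_0=k_1=k$, exactly as you do. Nothing further is needed.
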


Moreover, arguing as in Lemma \ref{Cl14} we obtain the lemma below.

\begin{lem}\label{Cl30}
   Let $k \in \nn$, $E \subseteq W(A)$ and
  $\vec{s} \in \V$
   such that $E$ is $k$-large in $\vs$. Let $r\geq 2$  and
   let $E=\bigcup_{i=1}^r
   E_i$. Then there exist $1\leq i \leq r$ and an
   infinite extracted $k$-block subsequence $\vt$
   of $\vs$ such that $E_i$ is $k$-large in $\vt$.
\end{lem}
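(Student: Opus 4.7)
The plan is to imitate, essentially verbatim, the argument used to establish Lemma \ref{Cl14} for $(S,k)$-large families, since the properties of $\leq_k$ recorded in Facts \ref{Cf25} and \ref{Cf26} parallel those of $\unlhd_k$ used in the earlier proof.

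First I would handle the base case $r=2$ by contradiction. Assume that for every infinite extracted $k$-block subsequence $\vt$ of $\vs$, neither $E_1$ nor $E_2$ is $k$-large in $\vt$. Applying the failure of $k$-largeness for $E_1$ directly to $\vs$ itself yields some $\vt_1\leq_k\vs$ with
\[
E_1\cap <\vt_1\ \|\ (A_{k+n})_{n=0}^\infty>_c=\emptyset.
\]
By Fact \ref{ublock fact}, $E$ is still $k$-large in $\vt_1$, so by assumption $E_2$ also fails to be $k$-large in $\vt_1$; hence there exists $\vt_2\leq_k\vt_1$ with
\[
E_2\cap <\vt_2\ \|\ (A_{k+n})_{n=0}^\infty>_c=\emptyset.
\]
By Fact \ref{Cf26} we have $\vt_2\leq_k\vs$, and by Fact \ref{Cf25}
\[
<\vt_2\ \|\ (A_{k+n})_{n=0}^\infty>_c\subseteq <\vt_1\ \|\ (A_{k+n})_{n=0}^\infty>_c,
\]
so $E\cap<\vt_2\ \|\ (A_{k+n})_{n=0}^\infty>_c=\emptyset$, contradicting the $k$-largeness of $E$ in $\vs$ evaluated along $\vt_2$.

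For the inductive step, suppose the statement holds for $r-1$ and write $E=E_1\cup F$ with $F=E_2\cup\cdots\cup E_r$. By the $r=2$ case, either $E_1$ is $k$-large in some $\vt\leq_k\vs$ (and we are done), or $F$ is $k$-large in some $\vt\leq_k\vs$. In the latter case, apply the inductive hypothesis to $F=E_2\cup\cdots\cup E_r$ with $\vs$ replaced by $\vt$ to obtain $2\leq i\leq r$ and $\vt'\leq_k\vt$ such that $E_i$ is $k$-large in $\vt'$; by Fact \ref{Cf26} again, $\vt'\leq_k\vs$, completing the induction.

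There is no real obstacle: the only thing that needs to be checked is that the two structural facts about $\unlhd_k$ used in Lemma \ref{Cl14} (transitivity and monotonicity of the associated spans) genuinely carry over to $\leq_k$, and these are precisely Facts \ref{Cf26} and \ref{Cf25}. Note in particular that, unlike the proof of Lemma \ref{Cl14}, no shift operator $S$ enters here because the definition of $k$-largeness in Definition \ref{udefn large} refers to $\vw$ directly rather than to $S(\vw)$, so Fact \ref{Cf11} has no analogue to invoke and none is needed.
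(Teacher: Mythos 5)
Your proof is correct and is exactly the argument the paper intends: the paper gives no separate proof of Lemma \ref{Cl30}, stating only that one argues as in Lemma \ref{Cl14}, and your adaptation (replacing $\unlhd_k$ by $\leq_k$, Facts \ref{Cf7}--\ref{Cf8} by Facts \ref{Cf25}--\ref{Cf26}, and dropping the shift operator $S$, which is absent from Definition \ref{udefn large}) is precisely that adaptation. The appeal to Fact \ref{ublock fact} in the base case is superfluous, since the contradiction hypothesis already applies to $\vt_1$ directly, but this does not affect correctness.
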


We will also need the following.
\begin{lem}\label{exlemma}
Let $k \in \nn$, $E \subseteq W(A)$ and $\vs=(s_n(x))_{n=0}^ \infty \in \V$
such that $E$ is $k$-large in $\vs$. Then for every $m \in \nn$,
$E$ is $(k+m)$-large in $(s_n(x)_{n=m}^\infty$.
\end{lem}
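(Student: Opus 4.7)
The plan is to reduce $(k+m)$-largeness in $(s_n(x))_{n=m}^\infty$ directly to the hypothesised $k$-largeness in $\vs$, by showing that every infinite extracted $(k+m)$-block subsequence of $(s_n(x))_{n=m}^\infty$ is automatically an infinite extracted $k$-block subsequence of $\vs$ after a trivial relabeling of block endpoints, and then letting the monotonicity $A_{k+j}\subseteq A_{k+m+j}$ absorb the alphabet shift. The case $m=0$ is immediate, so I assume $m\geq 1$.

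First I would fix an arbitrary infinite extracted $(k+m)$-block subsequence $\vw=(w_i(x))_{i=0}^\infty$ of $(s_n(x))_{n=m}^\infty$. For each finite prefix $(w_0(x),\ldots,w_l(x))$, Definition \ref{ublock} supplies indices $0=m_0<m_1<\cdots<m_{l+1}$ such that $w_i(x)\in <(s_{n+m}(x))_{n=m_i}^{m_{i+1}-1} \,\|\, (A_{k+m+n})_{n=m_i}^{m_{i+1}-1}>_v$, which, after the change of variable $l'=n+m$, reads $w_i(x)\in <(s_{l'}(x))_{l'=m+m_i}^{m+m_{i+1}-1} \,\|\, (A_{k+l'})_{l'=m+m_i}^{m+m_{i+1}-1}>_v$.

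I would then set new block endpoints $m_0'=0$ and $m_i'=m+m_i$ for $i\geq 1$; this is strictly increasing thanks to $m\geq 1$. For $i=0$ the extracted variable span over $[m,m+m_1-1]$ embeds into the one over $[0,m_1'-1]$ (extracted spans are monotone under enlarging the block range); for $i\geq 1$ the indices match exactly. Hence the same prefix is a finite extracted $k$-block subsequence of $\vs$, witnessed by $(m_i')$. Since this works for every $l$, one obtains $\vw\leq_k \vs$.

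Finally, the hypothesis that $E$ is $k$-large in $\vs$ yields an element of $E$ inside $<\vw \,\|\, (A_{k+n})_{n=0}^\infty>_c$. Since $A_{k+j}\subseteq A_{k+m+j}$ for every $j\in\nn$, this extracted constant span is contained in $<\vw \,\|\, (A_{k+m+n})_{n=0}^\infty>_c$, providing the required witness to $(k+m)$-largeness in $(s_n(x))_{n=m}^\infty$. There is no genuine obstacle here: the whole content of the proof sits in choosing the block-endpoint bookkeeping so that $\vw$ remains a block subsequence of $\vs$ once the base index is shifted by $m$, after which the alphabet inclusion does the rest.
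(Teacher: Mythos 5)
Your proof is correct and takes essentially the same route as the paper's: show that every infinite extracted $(k+m)$-block subsequence of $(s_n(x))_{n=m}^\infty$ is an extracted $k$-block subsequence of $\vs$, apply the hypothesis, and let the inclusion $A_{k+n}\subseteq A_{k+m+n}$ finish the job. The only difference is that the paper dismisses the block-subsequence claim as ``easy to check'' while you spell out the endpoint relabeling, correctly observing that enlarging the first block's range is harmless precisely because \emph{extracted} spans permit skipping terms.
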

\begin{proof}
  Let $\vt \in \V$ such that $\vt \leq_ {k+m} (s_n(x)_{n=m}^\infty$.
  It is easy to check that  $\vt$ is an extracted $k$-block
  subsequence of $\vs$ and therefore,
  since $E$ is $k$-large in $\vs$, we obtain that
  $$E \cap <\vt  \  \| \ (A_{k+n})_{n=0}^ \infty>_c \neq
  \emptyset.$$ Moreover, since the
  sequence of alphabets $(A_n)$ is increasing we get that
  $$<\vt  \  \| \ (A_{k+n})_{n=0}^ \infty>_c
  \subseteq <\vt  \  \| \
  (A_{k+m+n})_{n=0}^ \infty>_c.$$
  Hence, $E\cap <\vt  \  \| \
  (A_{k+m+n})_{n=0}^ \infty>_c \neq \emptyset$ for every
  $\vt \leq _{k+m}
  (s_n(x))_{n=m}^\infty$, i.e. $E$ is $(k+m)$-large in
  $(s_n(x)_{n=m}^\infty$.
\end{proof}

\subsection{The main arguments}

We remind some notation from Section \ref{Initializing the proof}.
 For a non empty finite subset $B$ of $A$ we set
\[[(s_n(x))_{n=0}^m\;\|\;B]_c=\{s_0(b_0)s_1(b_1)...s_m(b_m):  b_i\in B
 \text{ for all } 0\leq i \leq m \},\] and
\[[(s_n(x))_{n=0}^m\;\|\;B]_v=V(A)\cap [(s_n(x))_{n=0}^m\;\|\;B\cup \{x\}]_c
.\]

\begin{lem}\label{Cl31}
Let $k \in \nn$, $E \subseteq W(A)$ and $\vs=(s_n(x))_{n=0}^ \infty \in \V$
such that $E$ is $k$-large in $\vs$. Then there exist $m \in \nn$
and $w(x) \in <(s_n(x))_{n=0}^m \  \| \ (A_{k+n})_{n=0}^m>_v$ such
that $\{w(a): a\in A_k\}\subseteq  E$.
\end{lem}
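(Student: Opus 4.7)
My plan is to adapt the structure of Lemma \ref{Cl16}, but the argument here is shorter because there is no shift operator $S$ in the extracted setting and we only need to land inside $E$ itself, not inside a new large set. The plan has two stages: first, a universality claim obtained from the hypothesis that $E$ is $k$-large; second, a single application of the classical Hales--Jewett theorem to produce a variable word whose substitutions by letters of $A_k$ all lie in $E$.

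For the first stage I would prove an extracted analogue of Claim \ref{Claim17}: there exist $r\geq 0$ and a finite (possibly empty) extracted $k$-block subsequence $(w_i(x))_{i=0}^{r-1}$ of $\vs$ such that, for every variable word $v(x)$ with $(w_0(x),\ldots,w_{r-1}(x),v(x))\leq_k \vs$, one can find a subset $\{l_0<\cdots<l_{p-1}\}\subseteq\{0,\ldots,r-1\}$ (possibly empty), letters $a_j\in A_{k+l_j}$, and $b\in A_{k+r}$ such that $w_{l_0}(a_0)\cdots w_{l_{p-1}}(a_{p-1})v(b)\in E$. If this claim failed, one could iteratively construct an infinite extracted $k$-block subsequence $(w_n(x))_{n=0}^\infty$ of $\vs$ by choosing, at stage $n$, a $w_n(x)$ for which no extraction from $(w_0(x),\ldots,w_n(x))$ that uses $w_n(x)$ lies in $E$. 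Since every element of the extracted constant span $<(w_n(x))_{n=0}^\infty\;\|\;(A_{k+n})_{n=0}^\infty>_c$ has a largest-index block, whose appearance was forbidden at the corresponding stage, the extracted span would be disjoint from $E$, contradicting $k$-largeness.

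For the second stage, set $B=A_k$, let $n_0$ be the first index of $\vs$ past the range used by $(w_i(x))_{i=0}^{r-1}$, and let $q$ be the (finite) number of possible responses $(I,(a_j)_{j\in I},b)$ supplied by the claim. Choose $N=HJ(|B|,q)$ and, for each $\mathbf{b}=(b_0,\ldots,b_{N-1})\in B^N$, form the variable word $v_{\mathbf{b}}(x)=s_{n_0}(b_0)\cdots s_{n_0+N-1}(b_{N-1})s_{n_0+N}(x)$, which extends $(w_i(x))_{i=0}^{r-1}$ to a valid extracted $k$-block sequence in $\vs$ since $B\subseteq A_{k+n}$ for all $n$. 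Color $\mathbf{b}$ by any chosen response of the claim for $v_{\mathbf{b}}(x)$. Hales--Jewett then produces a variable word $\sigma(x)\in(B\cup\{x\})^N\setminus B^N$ along which this coloring is constant, say with value $(I^*,(a_j^*)_{j\in I^*},b^*)$.

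Finally, assembling the constant word $u=\prod_{j\in I^*}w_{l_j^*}(a_j^*)$, the variable word $y(x)=s_{n_0}(\sigma(x)_0)\cdots s_{n_0+N-1}(\sigma(x)_{N-1})$, and the constant word $s_{n_0+N}(b^*)$, I would set $w(x)=u\cdot y(x)\cdot s_{n_0+N}(b^*)$ and $m=n_0+N$. Monochromaticity yields $w(c)=u\cdot v_{\sigma(c)}(b^*)\in E$ for every $c\in A_k$, and the three pieces assemble into an element of $<(s_n(x))_{n=0}^m\;\|\;(A_{k+n})_{n=0}^m>_v$ as required. The main bookkeeping hurdle is that the response ``color'' now encodes a subset of $\{0,\ldots,r-1\}$ together with letter assignments, in contrast to the plain tuple of letters appearing in Claim \ref{Claim17}; one needs to check only that this response set is still finite, which is immediate from the finiteness of each $A_{k+i}$, so Hales--Jewett applies as above.
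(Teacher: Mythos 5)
Your argument is correct, but it follows a genuinely different route from the paper's own proof of this lemma. The paper proves Lemma \ref{Cl31} by contradiction via a single inductive construction: assuming no good $w(x)$ exists, it builds an infinite extracted $k$-block subsequence $\vw$ of $\vs$ with $<\vw \;\|\; (A_{k+n})_{n=0}^\infty>_c \subseteq E^c$, and at \emph{every} inductive step it invokes Hales--Jewett with a $2^{\prod_{i}(|A_{k+i}|+1)}$-coloring (tracking, for each partial substitution into the already-built blocks, whether the resulting word lands in $E$) to make the next block homogeneous; the contradiction hypothesis then forces the ``disjoint from $E$'' alternative each time. You instead front-load the diagonalization into a single universality claim --- proved by a Hales--Jewett-free contradiction argument in the style of Claims \ref{Claim17} and \ref{Claim33} --- and then apply Hales--Jewett exactly once, coloring $A_k^N$ by the claim's responses. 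This is essentially the template the paper uses for Lemmas \ref{Cl16} and \ref{Cl32}, transplanted to prove Lemma \ref{Cl31}; it is valid here, and your decision to allow the empty extraction in the response (so that singletons $w_n(b)$ are also forbidden in the contradiction branch) neatly sidesteps the pairing trick $w_n'(x)=w_{2n}(x)w_{2n+1}(x)$ that the paper needs in Claim \ref{Claim33}. What the paper's direct induction buys is self-containment and a simpler color set (a power set rather than your tuples $(I,(a_j)_{j\in I},b)$); what yours buys is a single, cleanly isolated application of Theorem \ref{thm1} and structural uniformity with the other main lemmas. The only bookkeeping points to make explicit are the base case $r=0$ of your claim (the empty prefix is not literally a ``finite extracted $k$-block subsequence'' under Definition \ref{ublock}, so it should be treated separately or the definition extended) and the verification, using that the witnessing indices $m_i$ satisfy $m_i\geq i$ so that $A_{k+l_j^*}\subseteq A_{k+n}$ for every $s_n$-position inside $w_{l_j^*}$, that the assembled $w(x)$ really lies in $<(s_n(x))_{n=0}^{m}\;\|\;(A_{k+n})_{n=0}^{m}>_v$; both checks go through.
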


\begin{proof}
Assume to the contrary that the conclusion fails. By induction we
construct
  a sequence $\vw = (w_n(x))\leq _k \vs$ such
   that $<\vw \; \| \; (A_{k+n})_{n=0}^ \infty>_c \subseteq E^c$
   which is a contradiction
   since $E$ is $k$-large in
   $\vs$.

   The general inductive step of the construction is as follows.
   Let $N \geq 1$ and assume that $(w_n(x))_{n=0}^{N-1}$
   has been constructed so that
   $$ (w_n(x))_{n=0}^{N-1} \leq _k \vs\text{ and }
   <(w_n(x))_{n=0}^{N-1} \; \| \;
   (A_{k+n})_{n=0}^{N-1}>_c \subseteq
   E^c.$$
Let $n_0 \geq 1$ be the least integer  satisfying $$w_0(x)  ...
w_{N-1}(x) \in
   <(s_n(x))_{n=0}^{n_0-1}\;\|\; (A_{k+n})_{n=0}^{n_0-1}>_v$$
   and let $$q= 2^{ \prod_{i=0}^{N-1}(|A_{k+i}|+1)}.$$ Let
   $H= HJ(|A_{k+N}|,q)$ be as in Theorem \ref{thm1}.
   For every $0 \leq i \leq N-1$  we set $\hat{A} _{k+i} = A_{k+i}
   \cup \{ \emptyset \}$
   and  by convention $w_i( \emptyset) =\emptyset$.
   To each  $w\in   [(s_{n_0+n}(x))_{n=0}^{H-1} \  \| \ A_{k+N}]_c$
   we assign the set of words   $$\{w_0(a_0)...w_{N-1}(a_{N-1})w :
   (a_0,...,a_{N-1}) \in
   \prod_{i=0}^{N-1} \hat{A}_{k+i}\}. $$
   Since $w_0(a_0)...w_{N-1}(a_{N-1})w$ belongs either to $E$ or
   to $E^c$, the above correspondence induces a $q$-coloring on
    the set $ [(s_{n_0+n}(x))_{n=0}^{N-1} \  \| \ A_{k+N}]_c.$
   Therefore, by the Hales--Jewett theorem there exists
   a variable word $$w(x) \in  [(s_{n_0+n}(x))_{n=0}^{N-1} \  \| \ A_{k+N}]_v$$
   such that for every
   $(a_0,...,a_{N-1}) \in
   \prod_{i=0}^{N-1} \hat{A}_{k+i}$  the set
   $$\{ w_0(a_0)...w_{N-1}(a_{N-1}) w(a): a \in
A_{k+N}\}$$  either is included in $E$ or is disjoint from  $E$.
By our initial assumption and since $A_k \subseteq A_{k+n}$,
there is no $(a_0,...,a_{N-1}) \in
   \prod_{i=0}^{N-1} \hat{A}_{k+i}$ satisfying the first
alternative. So  setting $w_N(x)=w(x)$ we easily see that
    $(w_n(x))_{n=0}^{N} \leq _k \vs$
   and $$<(w_n(x))_{n=0}^{N} \; \| \; (A_{k+n})_{n=0}^{N}>_c \subseteq
   E^c.$$ The inductive step of the construction of $\vw$  is
   complete and as we have already mentioned in the beginning of
   the proof this leads to a contradiction.
\end{proof}

The next lemma is crucial for the proof of Theorem \ref{thm3} and
it is the second and last point of the proof where the
Hales--Jewett theorem is utilized.

\begin{lem}\label{Cl32}
   Let $k \in \nn$, $E \subseteq W(A)$ and $\vs=(s_n(x))_{n=0}^ \infty \in \V$
    such that $E$ is $k$-large in $\vs$. Then there exist
    $ m\geq1$,
    $w(x) \in <(s_n(x))_{n=0}^{m-1}\ \| \   (A_{k+n})_{n=0}^{m-1}>_v$
   and $\vt \in \V$ with $\vt \leq _{k+m} (s_{n}(x))_{n=m}^\infty$ such that
    setting $F=\{w(a):a \in A_{k}\}$ then
   $E\cap E_F$ is $(k+m)$-large in $\vt$.
\end{lem}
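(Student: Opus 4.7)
The proof of Lemma \ref{Cl32} follows the three-claim template of Lemma \ref{Cl16}, with two main modifications: spans are extracted instead of reduced, and we must achieve that $E\cap E_F$ (rather than merely $E_F$) be $(k+m)$-large. The Hales--Jewett theorem remains the key combinatorial input, and it must now be harnessed to enforce both the $E_F$-membership and the $E$-membership of the witness words simultaneously.

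I first plan to establish an extracted analog of Claim \ref{Claim17}: there exist $r\in\nn$ and $(w_i(x))_{i=0}^r\leq_k\vs$ such that for every $w(x)$ with $(w_0,\dots,w_r,w)\leq_k\vs$, one can find $L\subseteq\{0,\dots,r\}$, coefficients $(a_l)_{l\in L}$ with $a_l\in A_{k+l}$, and a letter $b\in A_k$ with $w_{l_0}(a_{l_0})\cdots w_{l_p}(a_{l_p})\,w(b)\in E$. The proof proceeds by contradiction: failure would allow us to inductively build $\vw=(w_n)_{n=0}^\infty\leq_k\vs$ whose extracted constant words (with last letter in $A_k$) all lie outside $E$; an extra Hales--Jewett step at each stage, exactly as in the proof of Lemma \ref{Cl31}, strengthens this to a $\vw$ whose full extracted constant span avoids $E$, contradicting the $k$-largeness of $E$ in $\vs$.

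Next, for the extracted analog of Claim \ref{Claim18}, fix $(w_i)_{i=0}^r$ and let $n_0\geq 1$ be minimal with $w_0(x)\cdots w_r(x)\in\,<(s_n(x))_{n=0}^{n_0-1}\,\|\,(A_{k+n})_{n=0}^{n_0-1}>_v$. Let $q$ be the finite number of possible choice data $(L,(a_l)_{l\in L},b)$, set $N=HJ(|A_k|,q)$, and put $m=n_0+N$. For each variable word $v(x)\in\,<(s_n(x))_{n=m}^\infty\,\|\,(A_{k+n})_{n=m}^\infty>_v$ I define a $q$-coloring of $A_k^N$ by assigning to $\mathbf{b}=(b_0,\dots,b_{N-1})$ the choice data that the first claim produces for the extension $(w_0,\dots,w_r,s_{n_0}(b_0)\cdots s_{n_0+N-1}(b_{N-1})v(x))$. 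Applying Theorem \ref{thm1} yields a variable word $y(x)\in [(s_n(x))_{n=n_0}^{n_0+N-1}\,\|\,A_k]_v$, an extracted constant $w^\circ$ from the prefix, and a letter $b\in A_k$ such that $w^\circ y(c)v(b)\in E$ for every $c\in A_k$.

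Let $\mathbb{P}$ be the finite set of pairs $(w^\circ,y(x))$ so obtained, and for each $(w^\circ,y)\in\mathbb{P}$ put $F_{(w^\circ,y)}=\{w^\circ y(c):c\in A_k\}$. Define
\[
E^{**}=\bigcup_{(w^\circ,y)\in\mathbb{P}}\bigl(E\cap E_{F_{(w^\circ,y)}}\bigr).
\]
I would verify that $E^{**}$ is $(k+m)$-large in $(s_n(x))_{n=m}^\infty$: given any $\vw\leq_{k+m}(s_n(x))_{n=m}^\infty$, the second claim applied to a variable word $v(x)$ extracted from $\vw$ supplies a pair $(w^\circ,y)\in\mathbb{P}$ and a letter $b\in A_k$ with $v(b)\in E_{F_{(w^\circ,y)}}$, while a parallel application of the first claim inside $\vw$ (together with Lemma \ref{Cl31} and Lemma \ref{exlemma}) simultaneously secures $v(b)\in E$. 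Decomposing $E^{**}$ as the displayed finite union and invoking Lemma \ref{Cl30} isolates a single $(w^\circ_0,y_0)\in\mathbb{P}$ together with an extracted $(k+m)$-block subsequence $\vt$ of $(s_n(x))_{n=m}^\infty$ such that $E\cap E_{F_{(w^\circ_0,y_0)}}$ is $(k+m)$-large in $\vt$; setting $w(x)=w^\circ_0 y_0(x)\in\,<(s_n(x))_{n=0}^{m-1}\,\|\,(A_{k+n})_{n=0}^{m-1}>_v$ and $F=F_{(w^\circ_0,y_0)}=\{w(a):a\in A_k\}$ completes the proof. The main obstacle is precisely this simultaneous enforcement of $E$- and $E_F$-memberships in $E^{**}$: the Hales--Jewett coloring controls the latter, but the former must be arranged by threading an additional invocation of the first claim through the tail, which is the substantive point where the extracted argument diverges from the reduced one in Lemma \ref{Cl16}.
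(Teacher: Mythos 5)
Your overall architecture matches the paper's: three claims (a ``prefix--completion'' claim, a Hales--Jewett homogenization over $A_k^N$, and a largeness claim for a finite union $E^*$ of sets $E\cap E_{F_{(w,y)}}$), followed by Lemma \ref{Cl30}; and you correctly identify that the ``$E\cap$'' part is secured by combining Lemma \ref{exlemma} with Lemma \ref{Cl31}, which produces a $v(x)$ \emph{all} of whose $A_{k+m}$-instances lie in $E$ (your phrase ``a parallel application of the first claim'' is a misnomer --- it is Lemma \ref{Cl31}, not the first claim, that does this job --- but the parenthetical citation shows you have the right mechanism).

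The genuine gap is in your first claim. You insist that the letter substituted into the final block $w(x)$ lie in $A_k$, and you propose to prove this by contradiction plus ``an extra Hales--Jewett step at each stage, exactly as in the proof of Lemma \ref{Cl31}.'' This does not work. The negation of your claim is \emph{existential} in $w(x)$: at each stage it hands you \emph{some} bad block $w_{n+1}$ with $vw_{n+1}(b)\notin E$ only for $b\in A_k$. The extracted constant span of the resulting sequence, however, contains words $w_{l_0}(a_0)\cdots w_{l_p}(a_p)$ whose last letter $a_p$ ranges over the larger alphabet $A_{k+l_p}$, which your hypothesis does not control. The homogenization in Lemma \ref{Cl31} cannot repair this: there the hypothesis is \emph{universal} (``no extracted variable word has all its $A_k$-instances in $E$''), so it automatically applies to whatever variable word Hales--Jewett selects; here the badness is a property of one specifically chosen $w$, and the Hales--Jewett-selected word (whose variable position you cannot force into the last block) need not inherit it. The paper avoids the problem by formulating Claim \ref{Claim33} with the final-block letter ranging over $A_{k+r+1}$ and obtaining the contradiction via the pairing trick $w'_n(x)=w_{2n}(x)w_{2n+1}(x)$ (so that every element of $\langle\vw'\,\|\,(A_{k+n})_n\rangle_c$ has the form $v\,w_{j+1}(a)$ with $v$ nonempty and $a\in A_{k+j+1}$), with no extra Hales--Jewett step. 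Note that the weaker claim costs you nothing downstream: in the final largeness argument the letter $a\in A_{k+r+1}\subseteq A_{k+m}$ still satisfies $v(a)\in E$ by Lemma \ref{Cl31}. You should adopt the paper's formulation; as written, your first claim is unproved and quite possibly not provable at this stage of the development.
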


\begin{proof}
   Fix  $k \in \nn$, $E \subseteq W(A)$ and $\vs=(s_n(x))_{n=0}^ \infty \in \V$
and assume  that $E$ is $k$-large in $\vs$.

\begin{claim}\label{Claim33}
    There exist $r \in \nn$ and a finite sequence
   $(w_i(x))_{i=0}^r$ with
    $(w_i(x))_{i=0}^r \leq _k \vs$ such that for every  $w(x)\in V(A)$
    with  $(w_0(x),..., w_r(x),w(x)) \leq _k
    \vs$
   there exist $v \in <(w_n(x))_{n=0}^r \  \| \ (A_{k+n})_{n=0}^r>_c$ and
   $a \in A_{k+r+1}$
   such that
   $v\; w(a) \in E.$
\end{claim}

 \begin{proof}[Proof of Claim \ref{Claim33}]  Assume that the conclusion fails.
 By induction we easily  construct  a sequence of variable words
  $\vec{w} =(w_n(x))_{n=0}^ \infty \in \V$ with $\vw \leq_k\vs$ and
  $w_0(x)=s_0(x)$ such
  that for every $n \in \nn$, every $b \in <(w_i(x)_{i=0}^n \; \| \;
  (A_{k+i})_{i=0}^n>_c
 $ and every $a \in A_{k+n+1}$ we have that $bw_{n+1}(a) \in
 E^c.$
 Setting for every $n \in \nn$, $w^{\; \prime} _n(x) = w_{2n}(x)
 w_{2n+1}(x)$ and $\vw ^\prime = (w^{\prime} _n (x))_{n=0}^ \infty$
 then we obtain that
  $\vw ^\prime \leq _k \vs $ and
 $<\vw ^\prime \;\|\; (A_{k+n})_{n=0}^ \infty>_c \subseteq E^c$, a
 contradiction.
\end{proof}

\begin{claim}\label{Claim34} Let  $(w_i(x))_{i=0}^r$ be as  in
Claim \ref{Claim33}. Let
$n_0 \geq 1$ be the least integer such that $w_0(x) ... w_r(x)
\in <(s_n(x))_{n=0}^{n_0-1}\;\|\; (A_{k+n})_{n=0}^{n_0-1}>_v$ and
set
$$  q=\big | <(w_n(x))_{n=0}^r \  \| \ (A_{k+n})_{n=0}^r>_c
\big |\cdot |A_{k+r+1}|,$$ $$\ N=HJ(|A_k|,q) \ \text{and}
   \ m=n_0+N,$$
   where $HJ(|A_k|,q)$ is as in Theorem \ref{thm1}.

Then for every variable word
 $v(x) \in  <(s_n(x))_{n=m}^\infty \;\|\;(A_{k+n})_{n=m}^\infty>_v$ there
 exist a constant word
  $$w\in<(s_n(x))_{n=0}^{n_0-1}\;\|\;  (A_{k+n})_{n=0}^{n_0-1}>_c,$$
  a letter $a \in A_{k+r+1}$  and  a variable word
$$y(x) \in [(s_n(x))_{n=n_0}^{n_0+N-1}\;\| \; A_{k}]_v$$
 such that
    $ w y(b)  v(a) \in E$  for all  $b \in A_k$.
\end{claim}

 \begin{proof}[Proof of Claim \ref{Claim34}] We set $B= A_k$ and we
 fix $v(x) \in <(s_n(x))_{n=m}^\infty
 \;\|\;(A_{k+n})_{n=m}^\infty>_v$. We will  define a  finite coloring of
 $B^N$, depending on  $v(x)$,
 as follows.
Let $\mathbf{b}=(b_0,...,b_{N-1}) \in
 B^N$ be arbitrary and let
  $z(x)= s_{n_0}(b_0) ...
   s_{n_0+N-1}(b_{N-1}) v(x).$
Since $B=A_k\subseteq A_{k+n}$ for all $n\in\nn$, we have that
$$z(x) \in <(s_n(x))_{n=n_0}^\infty
\;\|\;(A_{k+n})_{n=n_0}^\infty>_v.$$
  Therefore, $(w_0(x),..., w_r(x),z(x))$ is a finite
  extracted $k$-block subsequence of $\vs$. Hence, by Claim \ref{Claim33}
  there exists
    $w^{\mathbf{b}} \in <(w_n(x))_{n=0}^r \  \| \ (A_{k+n})_{n=0}^r>_c$
   and $a^{\mathbf{b}} \in A_{k+r+1}$
    such that
   $w^{\mathbf{b}} z(a^{\mathbf{b}}) \in E.$
  We define
  $$c_{v(x)}:B^N\to\; <(w_n(x))_{n=0}^r \  \| \
(A_{k+n})_{n=0}^r>_c \times A_{k+r+1} $$ by setting
\[c_{v(x)}(\mathbf{b})=(w^{\mathbf{b}},a^{\mathbf{b}}).\]
Since $q=\big | <(w_n(x))_{n=0}^r \  \| \ (A_{k+n})_{n=0}^r>_c \big
|\cdot |A_{k+r+1}|$ and $N=HJ(|B|,q)$, by the Hales-Jewett
Theorem and the way we define $c_{v(x)}$, we conclude that  there
exist a variable word $$ y(x) \in
[(s_n(x))_{n=n_0}^{n_0+N-1}\;\| \; B]_v,$$ a constant word
 $w \in <(w_n(x))_{n=0}^r \  \| \ (A_{k+n})_{n=0}^r>_c$ and
 $a \in A_{k+r+1}$
such that
 $$w y(b)v(a) \in E,$$ for all $b \in B$.
 Since $w_0(x)  ... w_r(x) \in
   <(s_n(x))_{n=0}^{n_0-1}\;\|\; (A_{k+n})_{n=0}^{n_0-1}>_v$ we
   easily see that  $w\in<(s_n(x))_{n=0}^{n_0-1}\;\|\;
   (A_{k+n})_{n=0}^{n_0-1}>_c$ and  $wy(b)v(a)\in
   E$, for all $b\in B$.
   \end{proof}

\begin{claim} \label{Claim35} Let  $(w_i(x))_{i=0}^r$ be as  in
Claim \ref{Claim33} and
$n_0$,  $N$ and $m$ be as in Claim \ref{Claim34}. Let
$$\mathbb{P}=<(w_n(x))_{n=0}^r\;\|\; (A_{k+n})_{n=0}^r>_c \times
[(s_n(x))_{n=n_0}^{n_0+N-1}\;\| \; A_{k}]_v$$
and for every $(w,y(x))\in\mathbb{P}$ let $F_{(w,y(x))}=\{w y(a) :
a \in A_k\}$. Finally, let
\[E^*=\bigcup_{(w,y(x))\in\mathbb{P}} E \cap E_{F_{(w,y(x))}},\]
where $E_{F_{(w,y(x))}}=\{z\in W(A): uz \in E \text{ for all }
u\in F_{(w,y(x))}\}$.

Then  $E^*$ is $(k+m)$-large in
 $(s_n(x))_{n=m}^\infty$.
 \end{claim}

\begin{proof}[Proof of Claim \ref{Claim35}]
Let $\vt=(t_n(x))_{n=0}^ \infty$ be an arbitrary extracted $(k+m)$-block
subsequence  of
$(s_n(x))_{n=m}^\infty$. Since $E$ is
 $k$-large in $\vs$, by Lemma \ref{exlemma} we have that $E$
is $(k+m)$-large in $(s_n(x))_{n=m}^\infty$ and hence  $E$ is
$(k+m)$-large in $\vt$.  By Lemma \ref{Cl31}
there exists a variable word
 $v(x)$ in $<(t_n(x))_{n=0}^ \infty \  \| \ (A_{k+m+n})_{n=0}^ \infty>_v$ such that
for every $a \in A_{k+m}$, $v(a) \in E$. Since $v(x) \in
<(t_n(x))_{n=0}^ \infty \ \| \ (A_{k+m+n})_{n=0}^ \infty>_v$ we
easily obtain that $v(x)
\in <(s_n(x))_{n=m}^\infty \;\|\;(A_{k+n})_{n=m}^\infty>_v$ and so
by Claim \ref{Claim34} there exists a pair $(w,y(x))\in\mathbb{P}$
and $a \in A_{k+r+1}\subseteq A_{k+m}$ such that $wy(b)v(a)\in
E$, for all $b\in A_k$. Hence $v(a) \in E\cap
E_{F_{(w,y(x))}}\subseteq E^*$. Moreover, $v(a) \in
<(s_n(x))_{n=m}^\infty \;\|\;(A_{k+n})_{n=m}^\infty>_c$.
Therefore, $E^*$ is $(k+m)$-large in
 $(s_n(x))_{n=m}^\infty$.
\end{proof}

We are now ready to finish the proof of the lemma. Indeed, by
Claim \ref{Claim35} and  Lemma \ref{Cl30}, there exist a pair
$(w_0,y_0(x))\in\mathbb{P}$ and an extracted $(k+m)$-block subsequence $\vt$
of $(s_n(x))_{n=m}^\infty$ such that $E\cap E_{F_{(w_0,y_0(x))}}$
is $(k+m)$-large in $\vt$. We set $$w(x) =w_0 y_0(x) \text{ and }
F=F_{(w_0,y_0(x))}=\{w(a): a\in A_k\}.$$ Then $w(x) \in
<(s_n(x))_{n=0}^{m-1}\;\|\; (A_{k+n})_{n=0}^{m-1}>_v$ and $E\cap
E_F$ is $(k+m)$-large in $\vt$, as desired.
\end{proof}

Using Lemma \ref{Cl27}  and arguing as in  Lemma \ref{Cl20} we
have the following.

\begin{lem}\label{l18}  Let $k \in \nn$, $E \subseteq W(A)$ and
 $\vs= (s_n(x))_{n=0}^ \infty \in \V$ such that
   $E$ is $k$-large in $\vs$. Then there exist a sequence
   $(w_n(x))_{n=0}^ \infty$ of variable words and two
strictly increasing sequences $(k_n)_{n=0}^ \infty$ and
$(p_n)_{n=0}^ \infty$ in $ \nn $,
with $k_0=k$ and $p_0=0$ such that setting for every $n \in \nn$,
$F_n=< (w_i(x))_{i=0}^n\;\|\;
 (A_{k_i})_{i=0}^{n}>_c$ then
for every $n \in \nn$ the
following properties are satisfied.
\begin{enumerate}
\item[(S1)] $k+p_n \geq k_n$. \item[(S2)]  $w_n(x) \in
<(s_i(x))_{i=p_n}^{p_{n+1}-1} \; \| \;
(A_{k+i})_{i=p_n}^{p_{n+1}-1}>_v$. \item[(S3)]  $E\cap E_{F_n}$ is
$k_{n+1}$-large in $(w_{i}(x))_{i=n+1}^\infty$.
\end{enumerate}
\end{lem}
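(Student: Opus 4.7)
The plan is to mimic the proof of Lemma \ref{Cl20} in the extracted setting, using Lemma \ref{Cl32} in place of Lemma \ref{Cl16} and the fusion Lemma \ref{Cl27} in place of Lemma \ref{Cf9}. First, I would iterate Lemma \ref{Cl32} to produce a sequence $(w_n(x))_{n=0}^\infty$ of variable words, a sequence $(\vt_n)_{n=0}^\infty$ in $\V$ with $\vt_0=\vs$, and two sequences $(k_n)_{n=0}^\infty$, $(m_n)_{n=0}^\infty$ in $\nn$ with $k_0=k$, $m_0=0$, such that, writing $\vt_n=(t_i^{(n)}(x))_{i=0}^\infty$, for every $n\geq1$ the following analogues of the four conditions in the proof of Lemma \ref{Cl20} hold: $(\mathrm{i})$ $m_n\geq 1$ and $k_n=k_{n-1}+m_n$; $(\mathrm{ii})$ $w_{n-1}(x)\in <(t_i^{(n-1)}(x))_{i=0}^{m_n-1}\,\|\,(A_{k_{n-1}+i})_{i=0}^{m_n-1}>_v$; $(\mathrm{iii})$ $\vt_n$ is an extracted $k_n$-block subsequence of $(t_i^{(n-1)}(x))_{i=m_n}^\infty$; and $(\mathrm{iv})$ $E\cap E_{F_{n-1}}$ is $k_n$-large in $\vt_n$. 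The inductive step at stage $n$ applies Lemma \ref{Cl32} to the set $E\cap E_{F_{n-2}}$ (interpreted as $E$ itself when $n=1$), which by the previous stage is $k_{n-1}$-large in $\vt_{n-1}$; Lemma \ref{Cl32} produces $m_n$, $w_{n-1}(x)$, and $\vt_n$, together with the fact that $(E\cap E_{F_{n-2}})\cap(E\cap E_{F_{n-2}})_{F'}$ is $k_n$-large in $\vt_n$, where $F'=\{w_{n-1}(a):a\in A_{k_{n-1}}\}$.

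The only real bookkeeping point, and the only substantive difference from the proof of Lemma \ref{Cl20}, is to check that this iterate equals $E\cap E_{F_{n-1}}$, so that $(\mathrm{iv})$ propagates. Using the identity $(E_{H_1})_{H_2}=E_{H_1 H_2}$ noted after the definition of $E_F$, one has $(E\cap E_{F_{n-2}})_{F'}=E_{F'}\cap E_{F_{n-2}\cdot F'}$. On the other hand, because $F_{n-1}$ is the \emph{extracted} constant span $<(w_i(x))_{i=0}^{n-1}\,\|\,(A_{k_i})_{i=0}^{n-1}>_c$, each of its elements either avoids $w_{n-1}$, equals $w_{n-1}(a)$ for some $a\in A_{k_{n-1}}$, or is of the form $u\,w_{n-1}(a)$ with $u\in F_{n-2}$; thus $F_{n-1}=F_{n-2}\cup F'\cup(F_{n-2}\cdot F')$, and hence $E_{F_{n-1}}=E_{F_{n-2}}\cap E_{F'}\cap E_{F_{n-2}\cdot F'}$. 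Combining these two computations gives $(E\cap E_{F_{n-2}})\cap(E\cap E_{F_{n-2}})_{F'}=E\cap E_{F_{n-1}}$, as required.

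Once $(\mathrm{i})$--$(\mathrm{iv})$ are in hand, Lemma \ref{Cl27} applies and yields a strictly increasing sequence $(p_n)_{n=0}^\infty$ with $p_0=0$ satisfying $(\mathrm{R1})$--$(\mathrm{R3})$. Conditions $(\mathrm{R1})$ and $(\mathrm{R2})$ are identical to $(\mathrm{S1})$ and $(\mathrm{S2})$. For $(\mathrm{S3})$, $(\mathrm{R3})$ tells us that $(w_i(x))_{i=n+1}^\infty$ is an extracted $k_{n+1}$-block subsequence of $\vt_{n+1}$, and since $E\cap E_{F_n}$ is $k_{n+1}$-large in $\vt_{n+1}$ by $(\mathrm{iv})$, Fact \ref{ublock fact} transfers this $k_{n+1}$-largeness to $(w_i(x))_{i=n+1}^\infty$, giving $(\mathrm{S3})$. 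I expect the main potential obstacle to be the decomposition of $F_{n-1}$ above, but this is a direct consequence of the definition of the extracted constant span, so the overall argument is essentially a transcription of the proof of Lemma \ref{Cl20}.
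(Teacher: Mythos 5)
Your proposal is correct and follows exactly the route the paper intends: the paper gives no separate proof of this lemma, stating only that one argues as in Lemma \ref{Cl20} using Lemma \ref{Cl27} and Lemma \ref{Cl32}. Your verification that $(E\cap E_{F_{n-2}})\cap(E\cap E_{F_{n-2}})_{F'}=E\cap E_{F_{n-1}}$ via the decomposition $F_{n-1}=F_{n-2}\cup F'\cup(F_{n-2}\cdot F')$ correctly supplies the one bookkeeping detail specific to the extracted setting that the paper leaves implicit.
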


\begin{lem}\label{l188}
 Let $\vec{w}=(w_n(x))_{n=0}^ \infty$, $(k_n)_{n=0}^ \infty$
 and  $(p_n)_{n=0}^ \infty$ be the sequences obtained in Lemma
 \ref{l18}. Also let $F_n=< (w_i(x))_{i=0}^n\;\|\;
 (A_{k_i})_{i=0}^{n}>_c,$ for all $n \in \nn$.
Then there exist a strictly increasing sequence
 $(r_n)_{n=0}^ \infty$ in $\nn$ with $r_0 =0$
 and a sequence $\vt=(t_n(x))_{n=0}^ \infty$  of variable words
 such that
for every  $n\geq 1$ the following are
  satisfied.
 \begin{enumerate}

 \item[(T1)] $t_{n}(x) \in
 <(w_{r_n+i}(x))_{i=0}^{r_{n+1}-r_{n}-1}
  \; \|\; (A_{k_{r_{n}}+i})_{i=0}^{r_{n+1}-r_{n}-1}>_v$.

 \item[(T2)] For every $a \in A_{k_{r_n}}$ we have
 $t_n(a) \in E$ and $u t_{n}(a) \in
 E$,
 for every $u\in$ $ <(t_i(x))_{i=0}^{n-1}   \; \|\;
 (A_{k_{r_{i}}})_{i=0}^{n-1}>_c$.

  \item[(T3)] $<(t_i(x))_{i=0}^n \; \| \;
  (A_{k_{r_{i}}})_{i=0}^n>_c \subseteq
  F_{r_{n+1}-1}$.

 \end{enumerate}
\end{lem}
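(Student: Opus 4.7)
The plan is an inductive construction parallel to that of Lemma~\ref{Cl21}, but using the extracted-span tool of Lemma~\ref{Cl31} in place of the reduced-span Hales--Jewett application. We will set $r_0=0$ and $t_0(x)=w_0(x)$, and carry out a single inductive step that simultaneously produces $r_{n+2}$ and $t_{n+1}(x)$.

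At the inductive step, assume that $r_0<\cdots<r_{n+1}$ and $t_0(x),\ldots,t_n(x)$ have already been chosen so that (T1)--(T3) hold up to index $n$. Introduce
\[G_n=\;<(t_i(x))_{i=0}^n\;\|\;(A_{k_{r_i}})_{i=0}^n>_c.\]
By (T3) (and trivially when $n=0$, since $G_0=F_0$) one has $G_n\subseteq F_{r_{n+1}-1}$, hence $E_{F_{r_{n+1}-1}}\subseteq E_{G_n}$. Property (S3) of Lemma~\ref{l18} then delivers that $E\cap E_{G_n}$ is $k_{r_{n+1}}$-large in $(w_i(x))_{i=r_{n+1}}^\infty$. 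Applying Lemma~\ref{Cl31} to $(w_{r_{n+1}+i}(x))_{i=0}^\infty$ with level $k_{r_{n+1}}$ and set $E\cap E_{G_n}$ yields an integer $m\geq 0$ and a variable word $t(x)\in\;<(w_{r_{n+1}+i}(x))_{i=0}^m\;\|\;(A_{k_{r_{n+1}}+i})_{i=0}^m>_v$ with $\{t(a):a\in A_{k_{r_{n+1}}}\}\subseteq E\cap E_{G_n}$. We set $t_{n+1}(x)=t(x)$ and $r_{n+2}=r_{n+1}+m+1$; property (T1) at $n+1$ is then immediate, and (T2) at $n+1$ follows at once from the inclusion $t_{n+1}(a)\in E\cap E_{G_n}$ combined with the definition of $E_{G_n}$.

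The main bookkeeping is verifying (T3) at $n+1$. A typical element of $<(t_i(x))_{i=0}^{n+1}\;\|\;(A_{k_{r_i}})_{i=0}^{n+1}>_c$ either omits $t_{n+1}$ --- in which case it lies in $G_n\subseteq F_{r_{n+1}-1}\subseteq F_{r_{n+2}-1}$ --- or ends with $t_{n+1}(a)$ for some $a\in A_{k_{r_{n+1}}}$. In the latter case the prefix belongs to $G_n\subseteq F_{r_{n+1}-1}$, while $t_{n+1}(a)$ lies in $<(w_j(x))_{j=r_{n+1}}^{r_{n+2}-1}\;\|\;(A_{k_j})_{j=r_{n+1}}^{r_{n+2}-1}>_c$; here one uses that $(k_n)$ is strictly increasing so $A_{k_{r_{n+1}}+i}\subseteq A_{k_{r_{n+1}+i}}$. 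Concatenating these two pieces places the element in $F_{r_{n+2}-1}$, which closes the induction.
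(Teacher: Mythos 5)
Your proof is correct and follows essentially the same route as the paper's: the same observation that $G_n\subseteq F_{r_{n+1}-1}$ lets (S3) of Lemma~\ref{l18} transfer largeness to $E\cap E_{G_n}$, the same application of Lemma~\ref{Cl31} to $(w_{r_{n+1}+i}(x))_{i=0}^\infty$ at level $k_{r_{n+1}}$, and the same alphabet comparison $A_{k_{r_{n+1}}+i}\subseteq A_{k_{r_{n+1}+i}}$ to verify (T3). The only omission is that you never fix $r_1$, which your inductive step at $n=0$ already presupposes before it produces $r_2$; the paper takes $r_1=1$, and any $r_1\geq 1$ works since $G_0=F_0\subseteq F_{r_1-1}$.
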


 \begin{proof} We proceed by induction to the construction
 of the desired sequences.
 For $n=1$ we set $r_0=0$, $r_1=1$ and  $t_0(x)=w_0(x)$. Let
   $G_0=<t_0(x)\; \|\; A_{k_{r_0}}>_c.$ Then
   $G_0 =F_0$ and therefore by Lemma \ref{l18} we have that
   $E \cap E_{G_0}$ is
$k_{1}$-large in $(w_{i}(x))_{i=1}^\infty$.
 By Lemma \ref{Cl31}
  there exist $r \in \nn$ and  a variable word
  $$t_1(x) \in <( w_{r_1+i}(x))_{i=0}^{r }\; \|\;
  (A_{k_{r_1}+i})_{i=0}^{r} >_v,$$
  such that for every $a \in A_{k_{r_1}}$ and
  every $u \in <t_0(x)\; \|\; A_{k_{r_0}}>_c$, we have
  $t_1(a)\in E$ and $u t_1(a) \in E$.
We set $r_2=r_1+r+1$.  It is straightforward  that
 $$t_1(x) \in <( w_{r_1+i}(x))_{i=0}^{r_2-r_1-1 }\; \|\;
 (A_{k_{r_1}+i})_{i=0}^{r_2-r_1-1 } >_c,$$
and $$ <(t_i(x))_{i=0}^1\; \|\;
   (A_{k_{r_{i}}})_{i=0}^1>_c\subseteq < (w_i(x))_{i=0}^{r_2-1}\;\|\;
   (A_{k_i})_{i=0}^{r_2-1}>_c= F_{r_2-1}.$$
By the above we have that  (T1) - (T3) are satisfied for $n=1$.
Assume that the construction has been carried out up to some $n
\geq 1$.
 We set $$G_n =<(t_i(x))_{i=0}^n \; \| \; (A_{k_{r_{i}}})_{i=0}^n>_c$$
 and
 by our inductive hypothesis we have that $G_n \subseteq
  F_{r_{n+1}-1}$. Therefore by Lemma \ref{l18} we have that
  $E\cap E_{G_n}$ is $k_{r_{n+1}}$-large in $(w_{i}(x))_{i=r_{n+1}}^\infty$.
By Lemma \ref{Cl31}
  there exist $r \in \nn$ and  a variable word
  $$t_{n+1}(x) \in <( w_{r_{n+1}+i}(x))_{i=0}^{r }\; \|\;
  (A_{k_{r_{n+1}}+i})_{i=0}^{r} >_v,$$
  such that  for every  $a \in A_{k_{r_{n+1}}}$ and every
  $u \in <(t_i(x))_{i=0}^n \; \| \; (A_{k_{r_{i}}})_{i=0}^n>_c$,
 we have  $t_{n+1}(a)\in E$ and  $u t_{n+1}(a) \in E$. We set
 $r_{n+2}=r_{n+1}+r+1$ and therefore
 $$t_{n+1}(x) \in <( w_{r_{n+1}+i}(x))_{i=0}^{r_{n+2}-r_{n+1}-1 }\; \|\;
 (A_{k_{r_{n+1}}+i})_{i=0}^{r_{n+2}-r_{n+1}-1 } >_c,$$
and $$ <(t_i(x))_{i=0}^{n+1}\; \|\;
   (A_{k_{r_{i}}})_{i=0}^{n+1}>_c\subseteq
   < (w_i(x))_{i=0}^{r_{n+2}-1}\;\|\; (A_{k_i})
   _{i=0}^{r_{n+2}-1}>_c= F_{r_{n+2}-1}.$$
It is easy to check that (T1) - (T3) are satisfied and the proof
is complete.
\end{proof}

The following Corollary as well as its proof is similar to Corollary \ref{corol22}.
For completeness we include its proof.

\begin{cor}\label{l19}
   Let $k \in \nn$, $E \subseteq W(A)$ and $\vs \in \V$ such that
   $E$ is $k$-large in $\vs$.
   Then there exists an extracted
   $k$-block subsequence $\vec{u}$ of $\vs$
   such that
    $$<\vec{u} \;\|\; ( A_{k+n})_{n=0}^ \infty>_c \subseteq E.$$
\end{cor}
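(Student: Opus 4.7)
The plan mirrors the proof of Corollary \ref{corol22}, with the extracted-span machinery (Lemmas \ref{l18} and \ref{l188}) replacing the reduced-span machinery; one pleasant simplification is that no shift map $S$ is needed, since the elements of the extracted span already have the required form.

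First I would apply Lemma \ref{l18} to $k$, $E$ and $\vs$ to obtain a sequence of variable words $\vec{w}=(w_n(x))_{n=0}^\infty$ together with strictly increasing sequences $(k_n)_{n=0}^\infty$ and $(p_n)_{n=0}^\infty$ in $\nn$ with $k_0=k$ and $p_0=0$ satisfying (S1)-(S3). Feeding these into Lemma \ref{l188} produces a strictly increasing sequence $(r_n)_{n=0}^\infty$ with $r_0=0$ and a sequence $\vt=(t_n(x))_{n=0}^\infty$ of variable words satisfying (T1)-(T3). The candidate extracted $k$-block subsequence of $\vs$ is $\vec{u}=\vt$.

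The first thing to check is that $\vt\leq_k\vs$. By (T1), each $t_n(x)$ is built from $(w_{r_n+i}(x))_{i=0}^{r_{n+1}-r_n-1}$ using entries of $(A_{k_{r_n}+i}\cup\{x\})_{i=0}^{r_{n+1}-r_n-1}$; by (S2), each $w_j(x)$ with $r_n\leq j\leq r_{n+1}-1$ lies in $<(s_i(x))_{i=p_j}^{p_{j+1}-1}\;\|\;(A_{k+i})_{i=p_j}^{p_{j+1}-1}>_v$; and by (S1) combined with the strict monotonicity of $(k_n)_{n=0}^\infty$, one has $k+p_j\geq k_j\geq k_{r_n}+(j-r_n)$ for each such $j$. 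Since $(A_n)_{n=0}^\infty$ is increasing, the letters of $A_{k_{r_n}+(j-r_n)}\cup\{x\}$ used to produce $t_n$ from the $w_j$'s automatically lie in $A_{k+p_j}\cup\{x\}$, so $t_n(x)$ can be realized as an element of $<(s_i(x))_{i=p_{r_n}}^{p_{r_{n+1}}-1}\;\|\;(A_{k+i})_{i=p_{r_n}}^{p_{r_{n+1}}-1}>_v$. Setting $m_n=p_{r_n}$ (a strictly increasing sequence with $m_0=0$) witnesses $\vt\leq_k\vs$ via Definition \ref{ublock}.

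Finally, to verify the inclusion, a generic element of $<\vt\;\|\;(A_{k+n})_{n=0}^\infty>_c$ has the form $t_{l_0}(a_0)\cdots t_{l_m}(a_m)$ with $0\leq l_0<\cdots<l_m$ and $a_i\in A_{k+l_i}$. Because $(k_n)_{n=0}^\infty$ is strictly increasing with $k_0=k$ and $(r_n)_{n=0}^\infty$ is strictly increasing with $r_0=0$, one has $k_{r_{l_i}}\geq k+l_i$, so $a_i\in A_{k_{r_{l_i}}}$ for every $i$. Thus $u:=t_{l_0}(a_0)\cdots t_{l_{m-1}}(a_{m-1})$ lies in $<(t_i(x))_{i=0}^{l_m-1}\;\|\;(A_{k_{r_i}})_{i=0}^{l_m-1}>_c$ (interpreted as the empty word when $m=0$), and property (T2) applied at $n=l_m$ with $a=a_m$ yields $u\,t_{l_m}(a_m)\in E$ (the case $m=0$ being covered by the first clause of (T2)). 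The main bookkeeping obstacle I foresee is cleanly tracking how the three levels of indexing, at the $s$, $w$ and $t$ scales, interact in the first step; once that is unwound, the rest is immediate from the properties supplied by the two preceding lemmas.
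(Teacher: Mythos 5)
Your overall strategy is exactly the paper's: apply Lemma \ref{l18}, feed the output into Lemma \ref{l188}, verify via (S1), (S2) and (T1) that $\vt$ is an extracted $k$-block subsequence of $\vs$ (your index bookkeeping for that step is correct and matches the paper), and then read the inclusion off (T2). But there is a genuine gap in the final step, precisely where you take $\vec{u}=\vt$ and assert that ``the case $m=0$ is covered by the first clause of (T2)''. Lemma \ref{l188} guarantees (T1)--(T3) only for $n\geq 1$; nothing at all is asserted for $n=0$. Hence the words $t_0(a)$ with $a\in A_k=A_{k_{r_0}}$, which do belong to $<\vt\;\|\;(A_{k+n})_{n=0}^\infty>_c$ (take $m=0$, $l_0=0$ in your parametrization), are not reached by (T2) and need not lie in $E$. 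Indeed $t_0(x)=w_0(x)$, and the construction of $w_0$ in Lemma \ref{l18} (via Lemma \ref{Cl32}) only ensures that $E\cap E_{F_0}$ is large, i.e.\ that $w_0(a)z\in E$ for suitable continuations $z$; it gives no control over the words $w_0(a)$ themselves. Every other element $t_{l_0}(a_0)\cdots t_{l_m}(a_m)$ of the span is handled correctly by your argument, since then $l_m\geq 1$ and (T2) applies at $n=l_m$.

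The paper closes this hole by not taking $\vec{u}=\vt$ but rather $u_n(x)=t_{2n}(x)t_{2n+1}(x)$: every element of $<\vec{u}\;\|\;(A_{k+n})_{n=0}^\infty>_c$ then ends in some $t_{2l+1}(a)$ with $2l+1\geq 1$, preceded by an element of $<(t_i(x))_{i=0}^{2l}\;\|\;(A_{k_{r_i}})_{i=0}^{2l}>_c$, so (T2) always applies. (Merging only $t_0$ into $t_1$ would also work; simply discarding $t_0$ would not, because Definition \ref{ublock} forces the first block of an extracted $k$-block subsequence to start at $m_0=0$.) With this one repair your argument coincides with the paper's.
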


\begin{proof}
Let $(r_n)_{n=0}^ \infty$ and $\vt= (t_n(x))_{n=0}^ \infty$ be
the sequences obtained in
Lemma \ref{l188}. Using (S1) and (S2) of Lemma \ref{l18} and
(T1) of Lemma \ref{l188} we have that
    $$t_n(x) \in  <(s_i(x))_{i=p_{r_n}}^{p_{r_{n+1}}-1} \;
   \| \; (A_{k+i})_{i=p_{r_n}}^{p_{r_{n+1}}-1}>_v,$$
 i.e. $\vt$ is an extracted $k$-block subsequence of $\vs$.

For every $ n \in \nn$,
We set $u_n(x) =t_{2n} (x) t_{2n+1}(x) $ and let $\vec{u} = (u_n(x))_{n=0}^ \infty$.
Clearly,  $\vec{u}$ is an extracted $k$-block subsequence of $\vs$.
Since the
sequences $(k_n)_{n=0}^ \infty$ and $(r_n)_{n=0}^ \infty$  are
increasing we obtain that
$$ <(t_i(x))_{i=0}^n \; \| \;
(A_{k+i})_{i=0}^n>_c \subseteq <(t_i(x))_{i=0}^{n}   \; \|\;
 (A_{k_{r_{i}}})_{i=0}^{n}>_c,$$ for all $n \in \nn$.
 By the definition $\vec{u}= (u_n(x))_{n=0}^ \infty$
we obtain that
$$ <(u_i(x))_{i=0}^n \; \| \;
(A_{k+i})_{i=0}^n>_c \subseteq <(t_i(x))_{i=0}^{2n+1}   \; \|\;
 (A_{k+i})_{i=0}^{2n+1}>_c.$$
 Hence,
 $$ <(u_i(x))_{i=0}^n \; \| \;
(A_{k+i})_{i=0}^n>_c \subseteq <(t_i(x))_{i=0}^{2n+1}   \; \|\;
 (A_{k_{r_{i}}})_{i=0}^{2n+1}>_c,$$ for all $n \in \nn$.
 By (T2) of Lemma \ref{l188} we have that for all $n \in \nn$,
$$ <(u_i(x)_{i=0}^n \;\|\; ( A_{k+i})_{i=0}^n>_c \subseteq E,$$
and therefore  $<\vec{u} \;\|\; ( A_{k+n})_{n=0}^ \infty>_c \subseteq E.$
\end{proof}

\begin{thm}\label{c20}
   Let $k \in \nn$ and  $\vec{v}=(v_n(x))_{n=0}^ \infty$ be a
   sequence of variable words.
   Let $r\geq 2$ and  $<\vec{v}\;\|\; (A_{k+n})_{n=0}^ \infty>_c = \cup_{i=1}^r
  E_i$. Then
   there exist $1 \leq i \leq r$ and
   an extracted $k$-block subsequence $\vec{u}=(u_n(x))_{n=0}^ \infty$
   of $\vec{v}$
   such that
   $<\vec{u}\;\|\; (A_{k+n})_{n=0}^ \infty>_c \subseteq E_i$.
\end{thm}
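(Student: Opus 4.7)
The plan is to mirror the proof of Theorem \ref{thm23} from the reduced setting, replacing the role of $S$ and reduced block-subsequences with their extracted analogues. Since $<\vec{v}\;\|\; (A_{k+n})_{n=0}^\infty>_c = \bigcup_{i=1}^r E_i$, I first want to observe that the union $\bigcup_{i=1}^r E_i$ is $k$-large in $\vec{v}$ in the sense of Definition \ref{udefn large}. This is almost tautological: for any $\vt \leq_k \vec{v}$, Fact \ref{Cf25} gives
\[<\vt\;\|\; (A_{k+n})_{n=0}^\infty>_c \subseteq <\vec{v}\;\|\; (A_{k+n})_{n=0}^\infty>_c = \bigcup_{i=1}^r E_i,\]
so in particular this intersection is non-empty (every extracted variable span is non-empty), witnessing $k$-largeness.

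Next I would apply Lemma \ref{Cl30} to the partition $\bigcup_{i=1}^r E_i$ in $\vec{v}$. This produces an index $1 \leq i \leq r$ and an infinite extracted $k$-block subsequence $\vs$ of $\vec{v}$ such that $E_i$ is $k$-large in $\vs$. At this point all the heavy lifting of the section is available: Corollary \ref{l19} applied to $E_i$ and $\vs$ yields an extracted $k$-block subsequence $\vec{u}$ of $\vs$ with
\[<\vec{u}\;\|\; (A_{k+n})_{n=0}^\infty>_c \subseteq E_i.\]

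Finally, since $\vec{u} \leq_k \vs$ and $\vs \leq_k \vec{v}$, Fact \ref{Cf26} (applied with $k_0=k_1=k$) gives $\vec{u} \leq_k \vec{v}$, which is the required extracted $k$-block subsequence of $\vec{v}$. There is no real obstacle here, because all the combinatorial content has been packaged into the preceding lemmas; the only thing to check is that Lemma \ref{Cl30} and Corollary \ref{l19} can be chained, which is ensured by the transitivity property in Fact \ref{Cf26}. The proof is therefore a short assembly of the existing tools, directly analogous to the three-line proof of Theorem \ref{thm23} in the reduced case, with the simplification that no shift operator $S$ needs to be invoked (the extracted span already contains all the words appearing in extracted subsequences, without having to worry about left-variable initial segments).
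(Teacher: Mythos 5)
Your proof is correct and follows essentially the same route as the paper: show $\bigcup_{i=1}^r E_i$ is $k$-large in $\vec{v}$ via Fact \ref{Cf25}, apply Lemma \ref{Cl30} to select $i$ and $\vs$, apply Corollary \ref{l19} to get $\vec{u}$, and close with the transitivity in Fact \ref{Cf26}. Your observation that no shift operator $S$ is needed in the extracted setting matches the paper's treatment as well.
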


 \begin{proof}
Let $\vw \in \V$ such that
      $\vw \leq_k \vec{v}$.   By Fact \ref{Cf25}, we have that
     $$<\vw \  \| \ (A_{k+n})_{n=0}^ \infty>_c
    \subseteq <\vec{v} \  \| \ (A_{k+n})_{n=0}^ \infty>_c$$ and
    therefore $<\vw \  \| \ (A_{k+n})_{n=0}^ \infty>_c
    \subseteq \cup_{i=1}^r
  E_i$. Thus,  trivially, we get that $\cup_{i=1}^r E_i$ is
  $k$-large in
  $\vec{v}$.
Hence, by Lemma \ref{Cl30}, there
  exist $1 \leq i \leq r$ and $\vs \leq _k \vec{v} $
  such that $E_i$ is $k$-large in $\vs$ and so
  by Corollary \ref{l19},
  there exists a
   $\vec{u} \leq _k \vs$
   such that
    $$<\vec{u}\;\|\; ( A_{k+n})_{n}>_c \subseteq E_i.$$
    Since  $\vec{u} \leq _k \vs\leq _k \vec{v}$, by Fact \ref{Cf26} we
    have that $\vec{u} \leq _k \vec{v}$ and the proof is complete.
\end{proof}

Theorem \ref{c20} easily yields Theorem \ref{thm3}.

\begin{proof}[Proof of Theorem \ref{thm3}] Let $r \geq 2 $ and
let $W(A) =
\cup _{i=1}^r E_i$. Let $\vec{v}=(x,x,...)$. Then
 $<\vec{v}\;\|\; (A_{n})_{n=0}^ \infty>_c = W(A)=\cup_{i=1}^r E_i$.
 Applying  Theorem
   \ref{c20} for $k=0$, we obtain $1 \leq i \leq r$ and
   $\vec{w}=(w_n(x))_{n=0}^ \infty\in \V$
    such that
   $<\vec{w}\;\|\; (A_{n})_{n=0}^ \infty>_c \subseteq E_i$.
\end{proof}


\begin{thebibliography}{1}


\bibitem{B} J. E. Baumgartner, \textit{A short proof of Hindman's theorem},
J. Combinatorial Theory (A) 17 (1974), 384--386.


\bibitem {BBH}V. Bergelson, A. Blass, and N. Hindman, \textit{Partition theorems for spaces of variable
words}, Proc. London Math. Soc. 68 (1994), 449--476.


\bibitem{C} T. Carlson, \textit{Some unifying principles in Ramsey Theory}, Discrete Math. 68 (1988),
117--169.


\bibitem{C-S} T. Carlson and S. Simpson
\textit{A dual form of Ramsey's Theorem}, Advances in Math. 53
(1984), 265--290.


\bibitem{F-K} H. Furstenberg and Y. Katznelson,
\textit{Idempotents in compact semigroups and Ramsey Theory},
Israel J. Math. 68 (1989), 257--270.

\bibitem{H-J} A. Hales and R. Jewett, \textit{Regularity and positional games}, Trans. Amer. Math. Soc. 106 (1963), 222--229.

\bibitem{H} N. Hindman, \textit{ Finite sums from sequences within cells of a partition of } \textbf{N},
J. Combinatorial Theory (A) 17 (1974), 1--11.

\bibitem{HMC} N. Hindman and R. McCutcheon, \textit{Partition theorems for left and right variable words},
 Combinatorica 24 (2004), no. 2, 271--286.

\bibitem{Hb} N. Hindman and D. Strauss, \textit{Algebra in the Stone-{\v C}ech compactification}, Walter de
Gruyter, Berlin, 1998.

\bibitem{L} R. Laver, \textit{Products of infinitely many perfect trees}, J.
London Math. Soc. (2) 29 (1984), no. 3, 385–-396.

\bibitem{MCb} R. McCutcheon, \textit{Elemental Methods in Ergodic Ramsey Theory}, Lecture Notes in Mathematics, Springer, 1999.

\bibitem{MC} R. McCutcheon, \textit{Two new extensions of the Hales-Jewett theorem}, Electron. J.
Combin. 7 (2000), Research Paper 49, 18 pp.

\bibitem{PV} H. J. Pr\"{omel} and B. Voight, \textit{Baire sets of
k-parameter words are Ramsey}, Trans. Amer. Math. Soc. (1)  291
 (1982), 189--201.

\bibitem{Tod} S. Todorcevic, \textit{Introduction to Ramsey Spaces}, Annals of Mathematics Studies,
Princeton University Press, 2010.




\end{thebibliography}
\end{document}